\DeclareMathOperator{\diag}{diag}
\DeclareMathOperator{\Log}{Log}
\DeclareMathOperator{\Exp}{Exp}
\DeclareMathOperator{\im}{im}
\DeclareMathOperator{\inte}{int}
\DeclareMathOperator{\relint}{relint}
\DeclareMathOperator{\Conv}{Conv}
\DeclareMathOperator{\conv}{Conv}
\DeclareMathOperator{\Cone}{Cone}
\DeclareMathOperator{\Trop}{Trop}
\DeclareMathOperator{\Sing}{Sing}
\DeclareMathOperator{\rk}{rk}
\DeclareMathOperator{\pr}{pr}
\DeclareMathOperator{\sign}{sign}
\DeclareMathOperator{\Hess}{Hess}
\newcommand{\R}{\mathbb{R}}
\newcommand{\Rn}{\R^n}
\newtheorem{thm}{Theorem}[section]
\newtheorem{cor}[thm]{Corollary}
\newtheorem{prop}[thm]{Proposition}
\newtheorem{lemma}[thm]{Lemma}
\newtheorem{conj}[thm]{Conjecture}
\theoremstyle{definition}
\newtheorem{ex}[thm]{Example}
\newtheorem{remark}[thm]{Remark}
\newtheorem{assumption}[thm]{Assumption}
\newcommand{\cA}{\mathcal{A}}
\newcommand{\hA}{\hat{A}}
\newcommand{\eps}{\varepsilon}
\begin{document}

\title{Viro's patchworking and the signed reduced A-discriminant}

\author{Weixun Deng}
\address{Department of Mathematics, Texas A\&M University,
College Station, Texas, USA}
\email{deng15521037237@tamu.edu}

\author{J. Maurice Rojas}
\address{Department of Mathematics, Texas A\&M University,
College Station, Texas, USA}
\email{rojas@tamu.edu}

\author{Máté L. Telek}
\address{Department of Mathematical Sciences, University of Copenhagen,
Universitetsparken 5,
2100 Copenhagen, Denmark}
\email{mlt@math.ku.dk}

\begin{abstract}
Computing the isotopy type of a hypersurface, defined as the positive real zero set of a multivariate polynomial, is a challenging problem in real algebraic geometry. We focus on the case where the defining polynomial has combinatorially restricted exponent vectors and fixed coefficient signs, enabling faster computation of the isotopy type. In particular, Viro’s patchworking provides a polyhedral complex that has the same isotopy type as the hypersurface, for certain choices of the coefficients. So we present
properties of the signed support, focussing mainly on the case of n-variate (n+3)-nomials, that ensure all possible isotopy types can be obtained via patchworking. To prove this, we study the signed reduced A-discriminant and show that it has a simple structure if the signed support satisfies some combinatorial conditions.

\medskip
\emph{Keywords:  exponential sum, signed support, Viro's patchworking, isotopy type}
\end{abstract}

\maketitle


\section{Introduction}
Tropical geometry bridges the worlds of algebraic and polyhedral geometry. The 
key idea is to transform algebraic varieties into polyhedral objects, turning 
their algebraic structure functorially into a combinatorial structure. This 
approach has been fruitful in the case of varieties over algebraically closed fields \cite{maclagan2015introduction,TropAlgGeo_book}. Recently, the tropicalization of semialgebraic sets over the real numbers (or more generally over real closed fields) has received increasing attention \cite{SpeyerWilliams::PosGrassmannian,Vinzant::RealRadical,JellScheidererYu::RealTrop,Blekherman::Moments,Brandenburg::TropicalPositivity}. 

In the early  1980s, Oleg Viro showed that it is possible to associate a polyhedral complex with a parametrized polynomial in such a way that there exists \textit{some} choice of coefficients such that the polyhedral complex and the positive real zero set of the polynomial with that choice of coefficients have the same isotopy type \cite{Viro_Dissertation}. This result is known as  \emph{Viro's patchworking} in the literature and was one of the first examples of tropical geometry.

Classifying the isotopy type of real hypersurfaces is a challenging question in real algebraic geometry, tracing its origins to Hilbert's 16th problem  \cite{Viro::Hilbert16}. In his original formulation, Hilbert asked for a classification of isotopy types of plane real algebraic curves. Based on his patchworking method, Viro provided such a classification for curves of degree $7$ \cite{Viro::Degree7}.

Using Viro's method, under certain conditions, one can determine the possible 
isotopy types for hypersurfaces defined by the positive roots of 
polynomials of the form $f(x)\!=\!\sum_{\alpha\in \cA} c_\alpha x^\alpha$, with $\cA \subseteq \mathbb{Z}^n$ a finite set and $c_\alpha$ 
real and nonzero for all $\alpha\!\in\!\cA$. We call $\cA$ the {\em support} of 
$f$, let $\varepsilon_\alpha:=\sign(c_\alpha)$, $\eps\!:=\!(\eps_\alpha\; | \; \alpha\!\in\!\cA)\!\in\!\{\pm 1\}^{\cA}$ denote 
the vector of signs of the 
coefficients,
and call the pair $(\cA,\eps)$ the {\em signed 
support} of $f$. To know whether one can build {\em all} possible isotopy 
types for a given $(\cA,\eps)$ via Viro's patchworking one must first 
understand how the isotopy type of a hypersurface changes while 
varying the coefficient vector $c$. We will abuse notation slightly by 
calling $\eps_\alpha$ the {\em sign} of the exponent vector $\alpha$ when the underlying 
polynomial and support are clear. Hence we will sometimes speak of 
{\em positive} or {\em negative} exponents in this sense.

It is known that using discriminant varieties, one can decompose the coefficient space into a disjoint union of open connected sets --- called {\em chambers} 
---  such that in each chamber the isotopy type of any corresponding 
hypersurface is constant \cite{gelfand1994discriminants, BihanBound}. More specifically, such a chamber decomposition is given by connected components of the complement of the union of the signed $A$-discriminants $\nabla_{A_{F},\eps_F}$ for the faces $F$ of $\Conv(\cA)$. In \cite{RojasRusek_Adiscriminant}, a 
reduced version of the signed $A$-discriminant, denoted 
$\Gamma_\eps(A,B)$,  was introduced. The complement of the union of the 
signed reduced $A$-discriminants has the same property as its non-reduced 
counterpart, but has the advantage of reducing the number of parameters. 
We will review these constructions in Section \ref{Sec::SignedDiscr}. 

When considering chambers (reduced or non-reduced), there will sometimes 
be chambers where the isotopy type of the hypersurfaces intersected with the positive real orthant can {\em not} be 
obtained by Viro's patchworking (see, e.g., 
\cite[Example 2.9]{forsgard2017new} and \cite[Theorem 7.8]{BihanBound}). 
We call these chambers \emph{inner chambers}, see Remark~\ref{Remark:InnerChambers} for an explanation of this naming convention.

Our main goal is to give conditions on the signed support $(\cA,\eps)$ 
enabling Viro's patchworking to find all isotopy types, i.e., conditions 
that obstruct the existence of inner chambers. For instance, we show that the 
signed $A$-discriminant is empty if and only if the exponent vectors in $\cA$ 
with positive signs can be separated from those with negative signs by an 
affine hyperplane (Theorem \ref{Thm::SepHyperplane}). In that case, all 
hypersurfaces with signed support $(\cA,\eps)$ have the same isotopy 
type, and the isotopy type can be obtained by Viro's patchworking. Moreover, 
for a given set of exponent vectors $\cA$, we give an upper bound on the number of sign distributions $\eps \in \{ \pm 1 \}^{\cA}$ such that the signed support $(\cA,\eps)$ does not have a separating hyperplane (Proposition \ref{Prop_NumOfNonTrivSepHyp}).

As we will see, it will be convenient (and natural) to assume $\conv (\cA)$ 
has dimension $n$ and cardinality $n+k+1$, and then consider $k$ as a 
measure of the complexity of the resulting family of isotopy types. 
The special cases $k\!\in\!\{0,1\}$ are addressed in \cite{BCDPRR,epr}, so we 
will contribute to the case $k\!=\!2$: 
We show that the complement of the signed reduced $A$-discriminant has exactly 
two connected components if $\cA$ contains exactly one negative exponent 
vector (Theorem  \ref{Thm::OneNegative}) or if the positive and the negative exponent vectors of $\cA$ are separated by an $n$-simplex in a certain way (Theorem \ref{Thm::Simplex}). Under the additional assumption that the signed $A$-discriminants associated to proper faces of $\Conv(\cA)$ are empty, one can use Viro's patchworking to find all possible isotopy types (Corollary \ref{Thm::ViroAll}). 

Signed supports with a separating hyperplane or a separating simplex have previously been studied in \cite{DescartesHypPlane}. In that work, the authors used these conditions to show that the set of points where the polynomial takes negative values has, at most, one connected component.

For $n=2$ and $\cA$ consisting $5$ of points, we show that if the negative and positive exponent vectors are separated by two pairs of affine lines (Theorem \ref{Thm_TwoEnclHyp}), then  the complement of $\Gamma_\eps(A,B)$ has at most two connected components, and each is unbounded. In Section \ref{Section::5nomials}, we study further bivariate $5$-nomials and show that for a bounded chamber to exist in the complement of $\Gamma_\eps(A,B)$, the set of exponent vectors must satisfy very restrictive inequalities, see Theorem \ref{Thm::5nomials} and Remark \ref{Remark::5nomials}. 

We will work in the more general context of exponential sums on 
$\R^n$, instead of polynomials on $\R^n_{>0}$, and this will 
in fact simplify some of our arguments. Note that any real polynomial can be 
transformed into a real exponential sum while preserving topological properties of the corresponding zero sets: Any polynomial $f\colon \R^{n}_{>0} \to \R$ give rise to an exponential sum $\R^n \to \R$, $(x_1, \dots x_n) \mapsto f(e^{x_1}, \dots, e^{x_n} )$. Since the map $\Exp\colon \R^n \to \R^n_{>0},\, (x_1, \dots x_n) \mapsto (e^{x_1}, \dots, e^{x_n} )$ is a homeomorphism, two subsets of $\R^n_{>0}$ have the same isotopy type if and only if their images under $\Exp$ have the same isotopy type.

 \subsection*{Notation} For two vectors $v,w \in \R^{n}$, $v \cdot w = v_1w_1 + \dots + v_n w_n$ denotes the Euclidean scalar product, and $v \ast w = (v_1w_1, \dots ,v_n w_n)$ denotes the coordinate-wise product of $v$ and $w$. The transpose of a matrix $M$ will be denoted by $M^\top$. We denote the interior of a set $X \subseteq \R^{n}$ by $\inte(X)$. If $X \subseteq \R^{n}$ is a polyhedron, $\relint(X)$ denotes the relative interior of $X$. By $\#S$ we denote the cardinality of a finite set $S$. For a differentiable function $f \colon \R^n \to \R^m$, the Jacobian matrix at $x \in \R^n$ is denoted by $J_f(x)$.

\section{Preliminaries}

\subsection{Signed support of an exponential sum}
Let $\cA = \{\alpha_1, \dots , \alpha_{n+k+1} \} \subseteq \R^{n}$ be a finite set. We think about the elements of $\cA$ as the exponent vectors of an \emph{exponential sum}:
\begin{align}
\label{Eq::ExpSum}
f_c\colon \R^{n} \to \R, \quad x \mapsto f_c(x) = \sum_{i=1}^{n+k+1} c_i e^{\alpha_i\cdot x},
\end{align}
where $c \in (\R \setminus \{0\})^{n+k+1}$. We call $\eps = \sign( c) \in \{ \pm 1 \}^{n+k+1} $ a \emph{sign distribution}, and $(\cA,\eps)$ a \emph{signed support}. For a fixed order of the exponent vectors $\alpha_1, \dots , \alpha_{n+k+1}$, there is an isomorphism between the vector spaces $\R^{\cA} \cong \R^{n+k+1}$. We might use these two notations interchangebly. For a fixed sign distribution $\eps \in \{ \pm 1\}^{n+k+1}$, we write 
\[ \R^{\cA}_\eps = \{ c \in \R^{\cA} \mid \sign(c) = \eps \}.\]
for the orthant in $\R^\cA$ containing the coefficients matching the signs given by $\eps$.

We split the support set $\cA$ into the sets of \emph{positive} and \emph{negative exponent vectors}, that is, we define
\[ \cA_+ := \{ \alpha_i \in \cA \mid \eps_i = 1 \}, \qquad  \cA_- := \{ \alpha_i \in \cA \mid \eps_i = -1 \}.\]
We call $\cA$ \emph{full-dimensional} if $\Conv(\cA)$ has dimension $n$. For a set $S \subseteq \R^{n}$, we denote the restriction of $f_c$ to $S$ by
\[ f_{c|S}\colon \R^{n} \to \R, \quad x \mapsto f_c(x) = \sum_{ \alpha_i \in \cA \cap S } c_i e^{\alpha_i\cdot x}.\]
 Furthermore, we set $\mathcal{A}_S:=\mathcal{A}\cap S$ and
   $\varepsilon_S:=(\sign(c_i) \; | \; \alpha_i \in \cA_S)$.

The real zero set of $f_c$ is denoted by
\[ Z(f_c) := \{ x \in \R^n \mid f_c(x) = 0 \}. \]
We are interested in the possible \emph{isotopy types} of $Z(f_c)$ when $c \in \R^{n+k+1}$ varies over all coefficients such that $\sign(c) = \eps$. Two subsets $Z_0, Z_1 \subseteq \R^n$ are \emph{isotopic} (ambient in $\R^n$) if there exists a continuous map $H\colon [0,1] \times \R^n \to \R^n$, called an \emph{isotopy}, such that
\begin{itemize}
\item $H(t, \cdot)$ is a homeomorphism for all $t \in [0,1]$,
\item $H(0,\cdot)$ is the identity on  $\R^n$,
\item $H(1,Z_0) = Z_1$.
\end{itemize}
Being isotopic gives an equivalence relation on subsets of $\R^n$ \cite[Chapter 10.1]{Book::Isotopy}, which allows us to talk about their isotopy types.

\subsection{Viro's patchworking}

Viro's patchworking method provides possible isotopy types of $Z(f_c), \, c \in \R_\eps^\cA$ for a fixed signed support $(\cA,\eps)$.  In this section, we recall this method. We follow the notation used in \cite{RealHomotopy}.

Let $\cA = \{ \alpha_1, \dots , \alpha_{n+k+1} \}   \subseteq \mathbb{Z}^n$ be a finite set and $h \in \R^{n+k+1}$. We consider the \emph{lifted} points
\[ \cA^h := \big\{ (\alpha_i,h_i) \in \R^{n+1} \mid \alpha_i \in \cA \big\}.\]
A face $F \subseteq \Conv(\cA^h)$ is called an \emph{upper face} if there exists a vector $(v_F,1) \in \R^{n+1}$ such that
\[ F = \big\{ x \in \Conv(\cA^h) \mid (v_F,1) \cdot x \geq (v_F,1) \cdot y \text{   for all } y \in \Conv(\cA^h)\big\}. \]
The projection of upper faces of $\Conv(\cA^h)$ onto $\R^n$ gives a polyhedral subdivision $\mathcal{P}$ of $\Conv(\cA)$. For a generic choice of  $h \in \R^{n+k+1}$, each upper facet $F \subseteq \Conv(\cA^h)$ contains exactly $n+1$ points of $\cA^h$, and each polyhedron in $\mathcal{P}$ is a simplex.

The \emph{tropical hypersurface} associated to $\cA$ and $h \in \R^\cA$ is defined as
\[ \Trop(\cA,h):= \big\{ v \in \R^n \mid \max_{i = 1, \dots , n+k+1}( v \cdot \alpha_i + h_i) \text{ is attained at least twice}.\big\}.\] 
It is dual to the $(n-1)$-skeleton of the subdivision $\mathcal{P}$  induced by $h$. For a sign distribution $\eps \in \{\pm1 \}^\cA$, we define the \emph{signed tropical hypersurface}
\[ \Trop_\eps(\cA,h) := \Big\{ v \in \R^n \mid \max_{i = 1, \dots , n+k+1}( v \cdot \alpha_i + h_i) \; \text{ is attained for some } \alpha \in  \cA_+ \text{ and } \alpha' \in \cA_- \Big\}. \] 

\begin{ex}
\label{Example_Viro}
  Consider the following set of exponent vectors
    \begin{align}
 \cA = \{\alpha_1,\,\alpha_2,\,\alpha_3,\,\alpha_4,\,\alpha_5 \} = \left\{
  \begin{bmatrix} 0\\ 0 \end{bmatrix},  \,  \begin{bmatrix} 1 \\ 0 \end{bmatrix},   \,    \begin{bmatrix} 0 \\ 1 \end{bmatrix} ,   \,  \begin{bmatrix} 4 \\ 1 \end{bmatrix} ,   \,   \begin{bmatrix} 1 \\ 4 \end{bmatrix} \right\},
  \end{align}
   the sign distribution $\eps = (1,1,1,-1,-1)$ and $h = (5,4,4,5,5)$. The upper faces of the convex hull of the lifted points 
      \begin{align*}
 \cA^h = \left\{ \begin{bmatrix} 5 \\0\\ 0 \end{bmatrix},  \quad   \begin{bmatrix} 4\\ 1 \\ 0 \end{bmatrix},   \quad    \begin{bmatrix} 4\\ 0 \\ 1 \end{bmatrix} ,   \quad    \begin{bmatrix}5\\ 4 \\ 1 \end{bmatrix} ,   \quad    \begin{bmatrix}5\\ 1 \\ 4 \end{bmatrix} \right\},
  \end{align*}
  are shown in Figure \ref{FIG_Example_Viro}(a). The induced subdivision of $\Conv(\cA)$ contains $3$ triangles, $7$ edges, and $5$ vertices. The tropical hypersurface $\Trop(\cA,h)$, which is dual to the $1$ skeleton of the subdivision, contains $3$ vertices and $7$ edges. We depicted $\Trop(\cA,h)$ and the signed tropical hypersurface $\Trop_\eps(\cA,h)$ in Figure \ref{FIG_Example_Viro}(b). 
\end{ex}

\begin{figure}[t]
\centering
\begin{minipage}[h]{0.45\textwidth}
\centering
\includegraphics[scale=0.45]{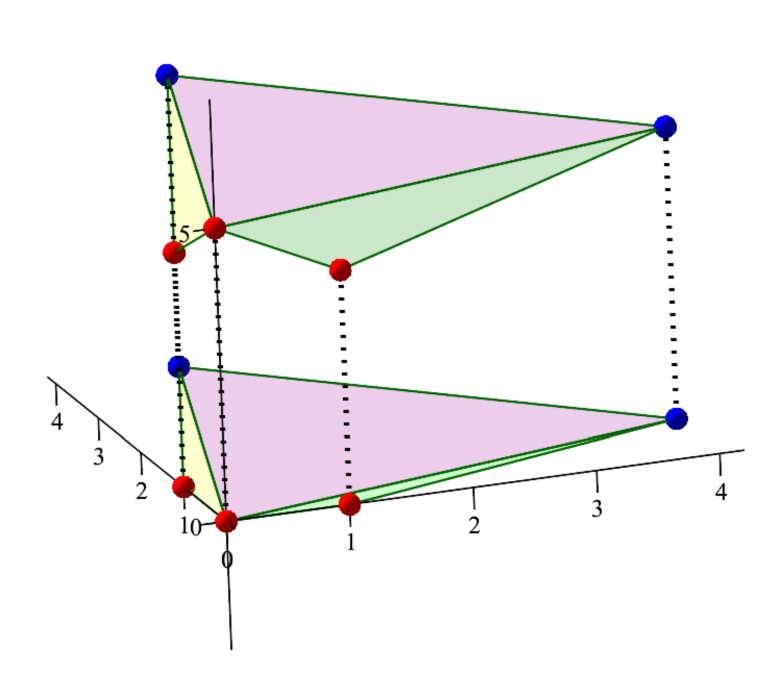}

{\small (a)}
\end{minipage}
\begin{minipage}[h]{0.45\textwidth}
\centering
\includegraphics[scale=0.5]{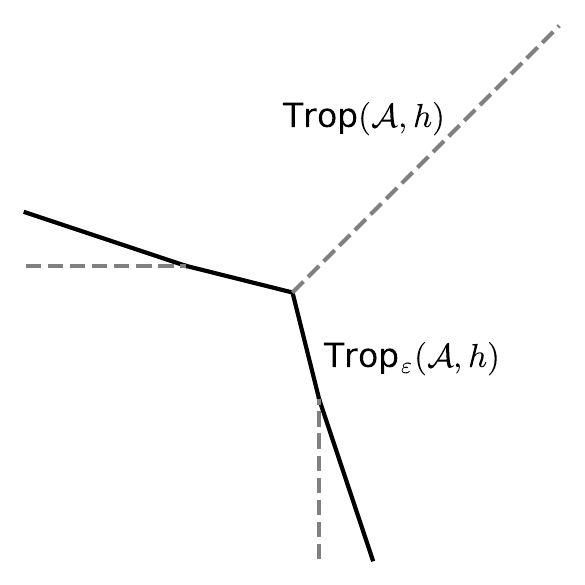}

{\small (b)}
\end{minipage}
\caption{{\small (a) Upper convex hull of the lifted points from Example \ref{Example_Viro} and the induced polyhedral subdivision. (b)  Signed tropical hypersurface associated to the points in (a). }}\label{FIG_Example_Viro}
\end{figure}

The following theorem is a well-known version of Viro's patchworking. Although the cited sources refer to polynomials and their zero sets in the positive orthant $\R_{>0}^n$, the result immediately extends to exponential sums using the coordinate-wise exponential map $\Exp\colon \R^n \to \R_{>0}^n$.

\begin{thm}
\label{Thm:Viro}
\cite{Viro_Dissertation}\cite[Ch.11 Theorem 5.6]{gelfand1994discriminants}\cite[Theorem 2.19]{TropAlgGeo_book}
Let $(\cA,\eps)$ be a signed support such that $\cA \subseteq \mathbb{Z}^n$ and let $h \in \R^\cA$ be generic.
For $t  \in \R$, consider the exponential sum
\[ g_t  \colon \R^{n} \to \R, \quad x \mapsto \sum_{i = 1}^{n+k+1} \eps_i e^{h_i t} e^{\alpha_i \cdot x}.\]

Then the signed tropical hypersurface $\Trop_\eps(\cA,h)$ is isotopic to $Z(g_t)$ for $t \gg 0$ sufficiently large.
\end{thm}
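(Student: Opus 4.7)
The plan is to rescale coordinates so that the large-$t$ behaviour of $g_t$ becomes a tropical limit, and then to construct an ambient isotopy of $\R^n$ carrying $Z(g_t)$ onto $\Trop_\eps(\cA,h)$ cell by cell. The scaling $x=ty$ is a diffeomorphism of $\R^n$, hence an ambient isotopy, and sends $Z(g_t)$ to the zero set of
\[
\psi_t(y)\;=\;\sum_{i=1}^{n+k+1}\eps_i\,e^{t(h_i+\alpha_i\cdot y)}.
\]
Since a further scaling of $\R^n$ is again an ambient isotopy, the claim reduces to showing that $Z(\psi_t)$ is ambient isotopic to $\Trop_\eps(\cA,h)$ for $t$ sufficiently large.

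Next I would localise to a neighbourhood of $\Trop(\cA,h)$. On each open connected component $C$ of $\R^n\setminus\Trop(\cA,h)$ a unique index $i^{*}(C)$ attains the maximum $M(y)=\max_i(h_i+\alpha_i\cdot y)$, and on every compact subset of $C$ this maximum is bounded away from the second-largest exponent. Factoring out $e^{tM(y)}$ shows that $\psi_t$ has constant sign $\eps_{i^{*}(C)}$ on such a compactum for $t$ large, so $Z(\psi_t)$ lies in an arbitrarily small neighbourhood of $\Trop(\cA,h)$.

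The core step is a stratum-by-stratum analysis of $\psi_t$. In the relative interior of a facet of $\Trop(\cA,h)$, genericity of $h$ forces exactly two indices $i,j$ to be tied; after dividing by $e^{tM(y)}$, $\psi_t$ has the form $\eps_i+\eps_j e^{-t\Delta(y)}$ plus exponentially smaller terms, where $\Delta$ is an affine function whose vanishing locus contains the facet. If $\eps_i=\eps_j$ no zeros occur near the facet; if $\eps_i=-\eps_j$ the implicit function theorem yields a smooth sheet of zeros given as a graph over the facet, converging uniformly to it as $t\to\infty$. At a vertex of $\Trop(\cA,h)$, genericity forces exactly $n+1$ indices to be tied, and the local model of $\psi_t$ is an $(n+1)$-term signed exponential sum whose zero set in a small ball around the vertex is isotopic to the corresponding star of $\Trop_\eps(\cA,h)$.

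Finally I would glue these local descriptions using a partition of unity subordinate to the open stars of the cells of $\Trop(\cA,h)$, yielding an ambient isotopy that retracts $Z(\psi_t)$ onto $\Trop_\eps(\cA,h)$ and is the identity outside a small tube around $\Trop(\cA,h)$. The main obstacle is this gluing step: the facet-wise graphs must match coherently across each vertex star without producing spurious components. I expect to handle this by induction on the codimension of the stratum, extending the isotopy inward along a stratum-preserving vector field and using the local vertex analysis to control the matching, following the classical treatment of Viro's patchworking \cite{TropAlgGeo_book,gelfand1994discriminants}.
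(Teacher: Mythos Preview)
The paper does not prove this theorem at all: it is quoted as background, with the proof deferred entirely to the cited references \cite{Viro_Dissertation}, \cite[Ch.~11, Theorem~5.6]{gelfand1994discriminants}, and \cite[Theorem~2.19]{TropAlgGeo_book}. There is therefore nothing in the paper to compare your argument against.

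That said, your outline follows the classical route taken in those references: the rescaling $x=ty$, the observation that the zero set concentrates near $\Trop(\cA,h)$ as $t\to\infty$, and the cell-by-cell identification with $\Trop_\eps(\cA,h)$. As a plan it is sound. As a proof it is not yet complete, and you identify the gap yourself: the gluing across vertex stars and the vertex analysis (``the local model \ldots\ is isotopic to the corresponding star'') are asserted rather than carried out. A further technical point is that the implicit function theorem argument near a facet must be made uniform in $t$, since the relevant partial derivative scales with $t$ and the domain on which you apply it shrinks; this is handled in the references by working with the normalised function $e^{-tM(y)}\psi_t(y)$ and controlling its derivatives transversally to the facet, but it deserves an explicit sentence. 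If you intend to include a proof rather than a citation, you will need to fill in both of these steps.
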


\subsection{Signed $A$-discriminant}
\label{Sec::SignedDiscr}
The goal of this subsection is to recall the notion of the \emph{A-discriminant} from \cite{gelfand1994discriminants, RojasRusek_Adiscriminant, BihanBound}. Let $f_c$ be an exponential sum as in \eqref{Eq::ExpSum}. A point $x \in \R^n$ is a \emph{singular zero} of $f_c$ if and only if 
\begin{align}
\label{Eq::EqsForSingularZeros}
 f_c(x) = \frac{\partial f_c(x) }{\partial x_1} = \dots =  \frac{\partial f_c(x) }{\partial x_n} = 0.
 \end{align}
We denote the set of singular zeros of $f_c$ by $\Sing(f_c)$. For a fixed signed support $(\cA,\eps)$, we define the \emph{signed $A$-discriminant} as 
\begin{align*}
    \nabla_{\cA,\eps} := \big\{ c \in \R^{\cA}_\eps \mid \Sing(f_c) \neq \emptyset \big\}.
\end{align*}
Thus, $\nabla_{\cA,\eps}$ contains all coefficients $c \in \R^{\cA}_\eps$ such 
that the exponential sum $f_c$ has a singular zero in $\Rn$. 

For the sake of completeness, we recall that the signed $A$-discriminant does not change under affine transformations of the exponent vectors.
\begin{prop}
\label{Lemma::Transform}
Let $f_c$ be an exponential sum with support $\cA = \{ \alpha_1, \dots , \alpha_{n+k+1} \} \subseteq \R^n$. For an invertible matrix $M \in \R^{n \times n}$ and $v \in \R^n$ consider the exponential sum
\[ g_c \colon \R^n \to \R, \quad x \mapsto g_c(x) = \sum_{i = 1}^{n+k+1} c_i e^{ (M \alpha_i  + v)  \cdot x}.\]
Then we have:
\begin{itemize}
    \item[(i)]  If $\det(M) > 0$, then the hypersurfaces $Z(f_c)$ and $Z(g_c)$ are isotopic.
    \item[(ii)] $\Sing(f_c) = M^\top\Sing(g_c)$.
    \item[(iii)] For all $x \in  \Sing(g_c)$ the Hessian matrices $\Hess_{f_c}(M^\top x)$ and $\Hess_{g_c}(x) $ have the same number of positive, negative and zero eigenvalues.
\end{itemize}
\end{prop}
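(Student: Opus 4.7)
The plan is to reduce all three parts to the single identity
\[
g_c(x) \;=\; e^{v \cdot x}\, f_c(M^\top x),
\]
which follows immediately by expanding $(M\alpha_i + v) \cdot x = \alpha_i \cdot (M^\top x) + v \cdot x$ in the definition of $g_c$ and pulling the common factor $e^{v\cdot x}$ out of the sum. The point is that $e^{v\cdot x}$ is strictly positive and smooth, so it does not affect zero sets and contributes only ``trivial'' terms when differentiating. I will use this identity as the workhorse for all three items.

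For (i), the identity gives $Z(g_c) = (M^\top)^{-1}\, Z(f_c)$ as subsets of $\R^n$. The hypothesis $\det(M) > 0$ places $(M^\top)^{-1}$ in $\GL_n^+(\R)$, which is path-connected, so a continuous path $\gamma\colon [0,1]\to \GL_n^+(\R)$ from the identity to $(M^\top)^{-1}$ yields an ambient isotopy $H(t,x) := \gamma(t)\,x$ carrying $Z(f_c)$ onto $Z(g_c)$. For (ii), differentiating the identity via the chain rule produces
\[
\nabla g_c(x) \;=\; e^{v\cdot x}\bigl(v\, f_c(M^\top x) + M\, \nabla f_c(M^\top x)\bigr).
\]
If $x \in \Sing(g_c)$, the defining equation $g_c(x) = 0$ forces $f_c(M^\top x) = 0$, killing the first summand; then the vanishing of $\nabla g_c(x)$ together with invertibility of $M$ and nonvanishing of the exponential forces $\nabla f_c(M^\top x) = 0$. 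Reading the same implications backwards shows $\Sing(g_c) = (M^\top)^{-1}\Sing(f_c)$, which rearranges to (ii).

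For (iii), I would differentiate the gradient identity once more and then evaluate at an arbitrary $x \in \Sing(g_c)$. Every cross-term produced by the product rule carries either $f_c(M^\top x)$ or a component of $\nabla f_c(M^\top x)$ as a factor, and all such terms vanish at a singular zero, leaving only
\[
\Hess_{g_c}(x) \;=\; e^{v\cdot x}\, M\, \Hess_{f_c}(M^\top x)\, M^\top.
\]
Since $e^{v\cdot x} > 0$ and $M$ is invertible, this is a positive scalar multiple of a congruence transformation, and Sylvester's law of inertia then yields matching numbers of positive, negative, and zero eigenvalues. The only mildly delicate step is the Hessian bookkeeping in (iii), where one must verify that every term except the pure second-derivative one drops at a singular zero; the hypothesis $\det(M) > 0$ is used only in (i), to guarantee that the linear change of variables lies in the identity component of $\GL_n(\R)$.
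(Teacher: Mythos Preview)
Your proof is correct and follows essentially the same route as the paper: both establish the identity $g_c(x)=e^{v\cdot x}f_c(M^\top x)$, use path-connectedness of $\GL_n^+(\R)$ for (i), the chain/product rule for (ii), and the congruence relation $\Hess_{g_c}(x)=e^{v\cdot x}M\,\Hess_{f_c}(M^\top x)\,M^\top$ at singular points together with Sylvester's law of inertia for (iii). The only cosmetic difference is that the paper cites a reference for the Hessian chain-rule identity whereas you derive it directly by noting that all cross-terms vanish at a singular zero.
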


\begin{proof}
Note that $g_c(x) = e^{v\cdot x}f_c(M^\top x)$ for all $x \in \R^n$. Since $e^{v\cdot x} \neq 0$, we have 
\begin{equation}
\label{Lemma_AffineTrans}
\begin{aligned}
    Z(g_c) &= Z(f_c(M^\top x))  =\left\{x \in \R^n \mid \sum\limits_{i = 1}^{n+k+1} c_i e^{ \alpha_i \cdot (M^\top x)}=0\right\} \\
   &=\left\{(M^\top)^{-1}y \in \R^n \mid \sum\limits_{i = 1}^{n+k+1} c_i e^{ \alpha_i \cdot y}=0\right\} = (M^\top)^{-1} Z(f_c)
\end{aligned}
\end{equation}
 Since the group of invertible real $n\times n$ matrices with positive determinant is path-connected (see, e.g. \cite[Theorem 3.68]{DiffManifoldandLieGroup_book}), there exists a continuous path from the identity matrix to $(M^\top)^{-1}$, which induces an isotopy. This shows (i).

Applying the product and the chain rule from calculus, we have
\[ J_{g_c}(x) = v^\top f_c(M^\top x) + e^{v\cdot x} J_{f_c}(M^\top x)M^\top.\]
Using \eqref{Lemma_AffineTrans} and that $M^\top$ is invertible, for $x \in Z(g_c)$  it follows that  $J_{g_c}(x) = 0$ if and only if  $J_{f_c}(M^\top x) = 0$, which implies (ii).

For the rest of the proof, we assume that $x \in \Sing(g_c)$. From \cite[Corollary 1]{skorski2019chain} it follows that
\[ \Hess_{g_c}(x) = e^{v\cdot x} M \Hess_{f_c} (M^\top x) M^\top. \]
Thus, $\Hess_{g_c}(x)$ and $\Hess_{f_c}(M^\top x)$ have the same number of positive, negative and zero eigenvalues by Sylvester's law of inertia \cite[Chapter 7]{meyer04}.
\end{proof}

\begin{cor}
\label{Cor:AdiscNotChanged}
    Let $(\cA,\eps)$ be a signed support, $M \in \R^{n\times n}$ an invertible matrix and $v \in \R^n$. For $M \cA+ v = \{ M\alpha+v \mid \alpha \in \cA \}$, we have
    \[ \nabla_{\cA,\eps}  = \nabla_{M \cA+v,\eps}.\]
\end{cor}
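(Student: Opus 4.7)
The plan is to deduce the corollary as an immediate consequence of Proposition \ref{Lemma::Transform}(ii). Fix the two exponential sums
\[ f_c(x) = \sum_{i=1}^{n+k+1} c_i e^{\alpha_i \cdot x}, \qquad g_c(x) = \sum_{i=1}^{n+k+1} c_i e^{(M\alpha_i + v)\cdot x}, \]
and note that both are parametrised by the \emph{same} coefficient vector $c$. In particular, the constraint $\sign(c) = \eps$ singles out the same orthant $\R^{\cA}_\eps$ regardless of which support we view $c$ as belonging to, so by the definitions in Section~\ref{Sec::SignedDiscr} we have
\[ c \in \nabla_{\cA,\eps} \iff \sign(c)=\eps \text{ and } \Sing(f_c) \neq \emptyset, \]
\[ c \in \nabla_{M\cA + v, \eps} \iff \sign(c)=\eps \text{ and } \Sing(g_c) \neq \emptyset. \]

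Next I would apply Proposition \ref{Lemma::Transform}(ii) to obtain $\Sing(f_c) = M^\top \Sing(g_c)$. Since $M$ is invertible, so is $M^\top$, and in particular $M^\top$ is a bijection of $\R^n$ to itself. Therefore $\Sing(f_c) = \emptyset$ if and only if $\Sing(g_c) = \emptyset$. Combining this equivalence with the two characterisations above yields the set equality $\nabla_{\cA,\eps} = \nabla_{M\cA+v, \eps}$.

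There is essentially no obstacle here: the only subtlety worth flagging in the written proof is the observation that the coefficient parameter $c$ plays the same role on both sides of the claimed equality, so that the sign-orthant condition $\sign(c) = \eps$ is unaffected by replacing $\cA$ by $M\cA + v$, and only the existence of a singular zero has to be transported, which is exactly what part (ii) of the proposition provides.
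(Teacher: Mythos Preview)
Your argument is correct and follows exactly the paper's approach: the paper's proof is the single sentence ``The statement follows directly from Proposition~\ref{Lemma::Transform}(ii),'' and you have simply spelled out why, namely that $\Sing(f_c)=M^\top\Sing(g_c)$ together with the invertibility of $M^\top$ gives $\Sing(f_c)\neq\emptyset\iff\Sing(g_c)\neq\emptyset$.
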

\begin{proof}
    The statement follows directly from Proposition \ref{Lemma::Transform}(ii).
\end{proof}

\begin{remark}
Using Proposition \ref{Lemma::Transform}, one might transform any full-dimensional support $\cA \subseteq \R^n$ to a support containing the standard basis vectors $e_1, \dots, e_n \in \R^n$ and the zero vector without changing the isotopy types of the corresponding hypersurfaces.

To be more precise, from $\dim \Conv(\cA) = n$ it follows that $\cA$ contains $n+1$ affinely independent vectors $\alpha_1, \dots , \alpha_{n+1}$. Thus, there exists an invertible matrix $M \in \R^{n\times n}$ such that $M(\alpha_1 -\alpha_{n+1}) = e_1, \dots, M(\alpha_n-\alpha_{n+1}) = e_n$. If $\det(M) < 0$, we change the order of $\alpha_1, \dots, \alpha_n$ such that the corresponding matrix $M$ has positive determinant. For $v:=-M\alpha_{n+1}$, the affine linear map $L\colon \R^n \to \R^n, \, \alpha \mapsto M\alpha + v$ satisfies $\{ 0,e_1, \dots ,e_n\} \subseteq L(\cA)$.
\end{remark}

For each face $F \subseteq \Conv(\cA)$, we define $\nabla_{\cA_F,\eps_F}$ in a similar way, and set
\[ \tilde{\nabla}_{\cA_F,\eps_F} := \big\{ (c_{\alpha_i} )_{i=1, \dots, n+k+1} \in \R^{n+k+1}_\eps \mid  (c_{\alpha_i})_{\alpha_i \in \cA_F } \in \nabla_{\cA_F,\eps_F} \big\}.\]

In \cite{BihanBound}, the authors proved the following statement regarding the topology of hypersurfaces corresponding to different connected components of the complement of the union of the signed $A$-discriminants $\tilde{\nabla}_{\cA_F,\eps_F}$.

\begin{prop}
\cite[Proposition 2.10]{BihanBound}
\label{Prop::BihanChambers}
Let $(\cA,\eps)$ be a full-dimensional signed support. If $c$ and $c'$ are in the same connected component of  
\[  \R^{\cA}_\eps \setminus \left( \bigcup_{F \subseteq \Conv( \cA )  \text{ a face}} \tilde{\nabla}_{\cA_F,\eps_F} \right),\]
then the zero sets $Z(f_c)$ and $Z(f_{c'})$ are isotopic.
\end{prop}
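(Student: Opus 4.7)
My plan is an Ehresmann/Thom-type trivialization argument, applied to a suitable compactification so that the behavior of $Z(f_c)$ at infinity, controlled by the face discriminants, becomes visible as behavior at the boundary. First, since the given complement is open, I would pick a continuous path $\gamma \colon [0,1] \to \R^\cA_\eps$ with $\gamma(0) = c$, $\gamma(1) = c'$, avoiding $\tilde{\nabla}_{\cA_F,\eps_F}$ for every face $F$ of $\Conv(\cA)$ and every $t\in[0,1]$. Passing through the homeomorphism $\Exp\colon \R^n \to \R^n_{>0}$ (possible since $\cA \subseteq \mathbb{Z}^n$), I identify $Z(f_c)$ with the positive real zero set of the Laurent polynomial $g_c(y) = \sum_i c_i y^{\alpha_i}$, and assemble the family
\[
\mathcal{Z} := \{(y,t) \in \R^n_{>0} \times [0,1] \mid g_{\gamma(t)}(y) = 0\}
\]
together with its projection $\pi\colon \mathcal{Z} \to [0,1]$. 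Taking $F = \Conv(\cA)$ in the hypothesis gives $\gamma(t) \notin \nabla_{\cA,\eps}$ for every $t$, so the gradient $\nabla_y g_{\gamma(t)}$ does not vanish on $Z(g_{\gamma(t)})$; hence $\mathcal{Z}$ is a smooth submanifold of $\R^n_{>0}\times[0,1]$ and $\pi$ is a submersion.

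To control behavior at infinity I would compactify using a smooth projective toric variety $X_P$ whose fan refines the inner normal fan of $P = \Conv(\cA)$, and pass to the compact nonnegative real part $X_P^{\ge 0}$. This compact space is stratified by real torus orbits indexed by faces $F$ of $P$, with $\R^n_{>0}$ the dense top stratum. In local toric coordinates adapted to the stratum associated to $F$, after dividing $g_c$ by a suitable monomial, the resulting regular function restricts on the stratum to the truncation $g_{c|F}$ up to a nonvanishing factor. The assumption $\gamma(t) \notin \tilde{\nabla}_{\cA_F,\eps_F}$ then guarantees that $g_{\gamma(t)|F}$ has no singular positive zero on the corresponding orbit, which in turn forces the closure $\overline{Z(g_{\gamma(t)})}$ in $X_P^{\ge 0}$ to meet each boundary stratum transversely, for every $t\in[0,1]$.

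Consequently $\overline{\mathcal{Z}} \subseteq X_P^{\ge 0} \times [0,1]$ is a compact Whitney-stratified space and the extension $\overline{\pi}$ is a proper stratified submersion onto $[0,1]$; Thom's first isotopy lemma (or an Ehresmann-with-corners variant) then supplies a stratum-preserving trivialization of $\overline{\pi}$ over $[0,1]$. Restricting this trivialization to the top stratum $\R^n_{>0}\times[0,1]$ produces the desired homeomorphism $Z(f_c) \to Z(f_{c'})$. The principal obstacle is in the second paragraph: I would need to unwind the local toric coordinates carefully and verify that the non-vanishing of each truncated gradient really translates into transverse intersection of $\overline{Z(g_{\gamma(t)})}$ with every boundary stratum, and that this transversality is stable as $t$ varies along the path, which is exactly the hypothesis the isotopy lemma needs to produce a global trivialization.
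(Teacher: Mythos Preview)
The paper does not actually prove this proposition: it is quoted as \cite[Proposition~2.9]{BihanBound} and used as a black box, with no argument supplied in the present text. So there is no in-paper proof to compare against.

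That said, your outline is the standard route to results of this type and is essentially what one finds in the literature behind the citation (and in the GKZ/toric tradition more generally): connect $c$ to $c'$ by a path in the complement, pass to the nonnegative part of a toric compactification associated with the normal fan of $\Conv(\cA)$, use the face-discriminant hypothesis to obtain transversality of the closure of $Z(g_{\gamma(t)})$ to every boundary stratum, and then apply Thom's first isotopy lemma (equivalently, an Ehresmann-with-corners argument) to the resulting proper stratified submersion over $[0,1]$. The point you single out as the ``principal obstacle''---that nonsingularity of the truncated polynomial $g_{c|F}$ on the orbit corresponding to $F$ is exactly what forces $\overline{Z(g_c)}$ to meet that stratum transversely---is indeed the only genuinely technical step, and it is handled by the routine local computation in toric coordinates: near the $F$-stratum one divides $g_c$ by a monomial supported on $F$, so the restriction to the stratum is $g_{c|F}$ up to a unit, and the tangential derivatives of this restriction are precisely the partials of $g_{c|F}$. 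Your plan is correct.
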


Finding the defining equalities of the signed $A$-discriminant is challenging, 
but an explicit parametrization is much simpler to find. First, let 
$\diag(c)$ denote the $\#\cA \times \#\cA$ diagonal matrix with 
$(a,a)$-entry $c_a$, and let us rewrite the equalities in 
\eqref{Eq::EqsForSingularZeros} as:
\begin{align}
\label{Eq_FullCriticalSystem}
 \hA \diag(c) \Big( e^{\alpha_i \cdot x} \Big)_{i=1,\dots,n+k+1} =  \hA_\eps \diag(\lvert c \rvert) \Big( e^{\alpha_i \cdot x} \Big)_{i=1,\dots,n+k+1} =0,
\end{align}
where the matrix $\hA$ is given by
\begin{align}
\label{Eq::AMatrix}
\hA=\left[\begin{array}{ccc}
1  & \ldots & 1 \\
\alpha_1 & \ldots & \alpha_{n+k+1}
\end{array}\right] \in \R^{(n+1) \times(n+k+1)},
\end{align}
$\eps = \sign( c) \in \{ \pm 1\}^{n+k+1}$ and $\hA_\eps = \hA \diag(\eps)$. We refer to the equation system \eqref{Eq_FullCriticalSystem} as the \emph{critical system} of $(\cA,\eps)$. Note that the assumption $\dim \Conv( \cA) = n$ is equivalent to $\rk(\hA) = n+1$. If $\rk(\hA) = n+1$, the kernel of $\hA$ has dimension $k$, which is usually called the \emph{codimension} of 
$\cA$.

If  $x \in \R^n$ is a singular zero of $f_c$, then $\diag(c) \big( e^{\alpha_i \cdot x} \big)_{i=1,\dots,n+k+1} \in \ker(\hA)$. Choose a basis of $\ker(\hA)$ and write these vectors as columns of a matrix $B \in \R^{(n+k+1) \times k}$. Such a choice of $B$ is called a \emph{Gale dual matrix} of $\hA$. With slight abuse of notation, we might call $B$ a Gale dual matrix of $\cA$. Since $\im(B) = \ker(\hA)$ for each $x \in \Sing(f_c)$ there exists $\lambda \in \R^k$ such that
\begin{align}
   \label{Eq::Blambda}
     B\lambda =  \diag(c) \big( e^{\alpha_i \cdot x} \big)_{i=1,\dots,n+k+1}.
\end{align}
Since $e^{\alpha_i \cdot x}$ is positive for all $x \in \R^n$ and all $\alpha_1, \dots , \alpha_{n+k+1}$, the signs of the vector in \eqref{Eq::Blambda} are given by $\sign(c) = \eps$. Therefore, it is enough to look at the set
\[ \mathcal{C}_{B,\eps} := \Big\{ \lambda \in \R^k  \mid \sign(B\lambda) = \eps \Big\}. \]
We define the  \emph{signed Horn-Kapranov Uniformization} map as
\begin{align}
\label{Eq::HornKapranov}
 \psi\colon  \mathcal{C}_{B,\eps} \times \R^n \to \R^{n+k+1}_\eps, \quad (\lambda,x) \mapsto B\lambda \ast \big( e^{\alpha_i \cdot (-x)} \big)_{i=1,\dots,n+k+1},
\end{align}
where $\ast$ denotes the point-wise multiplication of two vectors.

\begin{prop}
\label{Prop_HornKapranov}
Let $(\cA,\eps)$ be a full-dimensional signed support with Gale dual matrix $B$. For the  signed Horn-Kapranov Uniformization map \eqref{Eq::HornKapranov}, we have
\[ \im(\psi) = \nabla_{\cA,\eps}.\]
\end{prop}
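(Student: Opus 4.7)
The plan is to prove the two inclusions $\im(\psi)\subseteq \nabla_{\cA,\eps}$ and $\nabla_{\cA,\eps}\subseteq \im(\psi)$ separately, using the chain of observations that was already laid out just before the statement. The key idea in both directions is the identity
\[
\diag(c)\bigl(e^{\alpha_i\cdot x}\bigr)_{i=1,\dots,n+k+1}\ast \bigl(e^{-\alpha_i\cdot x}\bigr)_{i=1,\dots,n+k+1} = c,
\]
which lets one go back and forth between a coefficient vector $c$ and a pair $(\lambda,x)$ encoding a singular zero.

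For $\im(\psi)\subseteq \nabla_{\cA,\eps}$, I would fix $(\lambda,x)\in\mathcal{C}_{B,\eps}\times\R^n$ and set $c:=\psi(\lambda,x)=B\lambda\ast(e^{-\alpha_i\cdot x})_i$. The sign check $\sign(c)=\eps$ is immediate: the factors $e^{-\alpha_i\cdot x}$ are positive, so the signs of $c$ coincide with those of $B\lambda$, which equal $\eps$ by definition of $\mathcal{C}_{B,\eps}$. To see that $x$ is a singular zero of $f_c$, I would compute
\[
\diag(c)\bigl(e^{\alpha_i\cdot x}\bigr)_i = \bigl(B\lambda\ast(e^{-\alpha_i\cdot x})_i\bigr)\ast\bigl(e^{\alpha_i\cdot x}\bigr)_i = B\lambda,
\]
and then $\hA\,\diag(c)(e^{\alpha_i\cdot x})_i = \hA B\lambda = 0$ because the columns of $B$ span $\ker(\hA)$. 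By the critical system \eqref{Eq_FullCriticalSystem}, this means $x\in\Sing(f_c)$, hence $c\in\nabla_{\cA,\eps}$.

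For the reverse inclusion $\nabla_{\cA,\eps}\subseteq \im(\psi)$, I would take $c\in\nabla_{\cA,\eps}$ and a singular zero $x\in\Sing(f_c)$. By \eqref{Eq_FullCriticalSystem} the vector $\diag(c)(e^{\alpha_i\cdot x})_i$ lies in $\ker(\hA)=\im(B)$, so there exists $\lambda\in\R^k$ with $B\lambda=\diag(c)(e^{\alpha_i\cdot x})_i$. Since $e^{\alpha_i\cdot x}>0$, we get $\sign(B\lambda)=\sign(c)=\eps$, i.e.\ $\lambda\in\mathcal{C}_{B,\eps}$. Finally, applying the identity above,
\[
\psi(\lambda,x) = B\lambda\ast\bigl(e^{-\alpha_i\cdot x}\bigr)_i = \diag(c)\bigl(e^{\alpha_i\cdot x}\bigr)_i\ast\bigl(e^{-\alpha_i\cdot x}\bigr)_i = c,
\]
which shows $c\in\im(\psi)$.

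There is no real obstacle here: the statement is essentially a bookkeeping exercise once one has set up $\hA$, $B$, and $\mathcal{C}_{B,\eps}$ correctly. The only step that deserves care is checking the sign condition in both directions, which requires the (trivial) remark that coordinate-wise multiplication by the positive vector $(e^{\alpha_i\cdot x})_i$ preserves signs. The use of the assumption $\dim\Conv(\cA)=n$ is implicit, via $\rk(\hA)=n+1$ and $\dim\ker(\hA)=k$, which guarantees that the matrix $B$ in the definition of $\psi$ makes sense.
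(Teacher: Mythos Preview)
Your proof is correct and follows essentially the same approach as the paper's own argument: both directions hinge on the identity $\diag(c)(e^{\alpha_i\cdot x})_i = B\lambda$ together with $\hA B=0$ and the sign-preserving effect of multiplying by positive exponentials. If anything, your version is slightly more explicit in verifying the sign condition $\sign(c)=\eps$ in the forward inclusion, which the paper leaves implicit.
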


\begin{proof}
If $\psi(\lambda,x) = c$, then
\[\hA \diag(c) \big( e^{\alpha_i \cdot x} \big)_{i=1,\dots,n+k+1} = \hA \Big(  B\lambda \ast \big( e^{\alpha_i \cdot -x} \big)_{i=1,\dots,n+k+1} \ast \big( e^{\alpha_i \cdot x} \big)_{i=1,\dots,n+k+1} \Big) = \hA B \lambda = 0, \]
which implies that $x \in \Sing(f_c)$ and therefore $c \in \nabla_{\cA,\eps}$.

On the contrary, if $c \in \nabla_{\cA,\eps}$, then there exists a point $x \in \Sing(f_c)$. From \eqref{Eq::Blambda} follows that there exists $\lambda \in \mathcal{C}_{B,\eps}$ such that $\psi(\lambda,x) = c$.
  \end{proof}

The signed $A$-discriminant lives in an ambient space of dimension $n+k+1$. Following \cite{RojasRusek_Adiscriminant}, we reduce the dimension of the ambient space to $k$ by quotienting out some homogeneities without losing essential information as follows.  We define the  \emph{signed reduced A-discriminant} $\Gamma_\eps(A,B)$ \cite[Definition 2.5]{RojasRusek_Adiscriminant} to be 
\[ \Gamma_\eps(A,B) := B^\top\Log\lvert \nabla_{\cA,\eps} \rvert,\]
where $\Log$ is the coordinate-wise natural logarithm map and $\lvert \cdot \rvert$ denotes the coordinate-wise absolute value map.

\begin{thm}
\label{Thm::RojasRusek_Adiscriminant}
 \cite[Theorem 3.8.]{RojasRusek_Adiscriminant}
Let $(\cA,\eps)$ be a full-dimensional signed support with Gale dual matrix $B$ and let $c,c' \in \R^{\cA}_\eps$. If $B^\top\Log\lvert c \rvert $ and $B^\top\Log\lvert c' \rvert $ are in the same connected component of 
\[ \R^{k} \setminus \left( \bigcup_{F \subseteq \Conv( \cA )  \text{ a face}}  B^\top\Log\lvert \tilde{\nabla}_{\cA_F,\eps_F} \rvert \right), \]
then the zero sets $Z(f_c)$ and $Z(f_{c'})$ are  ambiently isotopic in $\R^{n}$.
\end{thm}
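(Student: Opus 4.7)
The plan is to reduce the theorem to Proposition \ref{Prop::BihanChambers} by exploiting a natural group action on $\R^{\cA}_\eps$ whose orbits are exactly the fibers of the map $\pi\colon c \mapsto B^\top \Log\lvert c\rvert$. Introduce the group $G = \R \times \R^n$ acting on $\R^{\cA}_\eps$ by
\[
(s,v)\cdot c := \bigl(e^s e^{\alpha_i\cdot v}\, c_i\bigr)_{i=1,\dots,n+k+1}.
\]
A direct computation gives $f_{(s,v)\cdot c}(x) = e^s f_c(x+v)$, hence $Z(f_{(s,v)\cdot c}) = Z(f_c) - v$. Since the path $t\mapsto Z(f_c) - tv$ in $\R^n$ is an ambient isotopy, the $G$-action preserves the ambient isotopy type of the zero set.

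Next I would identify the fibers of $\pi$ with the $G$-orbits. Since the columns of $B$ form a basis of $\ker \hA$, we have $\ker B^\top = (\im B)^\perp = \im \hA^\top$, and $\im \hA^\top$ is precisely the set of vectors of the form $(s + \alpha_i\cdot v)_{i}$ for $(s,v) \in G$. Taking $\Log\lvert\cdot\rvert$ turns the $G$-action into a translation by $\im \hA^\top$, which is killed by $B^\top$. Therefore $\pi(c) = \pi(c')$ if and only if $c$ and $c'$ lie in the same $G$-orbit. Furthermore, because singular zeros of $f_c$ are translated by $-v$ and rescaled by $e^{-s}$ under the action, each set $\tilde{\nabla}_{\cA_F,\eps_F}$ is $G$-invariant. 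By saturation, $\pi^{-1}\bigl(\pi(\tilde{\nabla}_{\cA_F,\eps_F})\bigr) = \tilde{\nabla}_{\cA_F,\eps_F}$.

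Now let $U$ be the connected component of $\R^k \setminus \bigcup_F B^\top \Log\lvert \tilde{\nabla}_{\cA_F,\eps_F}\rvert$ containing both $\pi(c)$ and $\pi(c')$. By the previous paragraph, $\pi^{-1}(U) \subseteq \R^{\cA}_\eps \setminus \bigcup_F \tilde{\nabla}_{\cA_F,\eps_F}$. Because $\pi$ is a trivial principal $G$-bundle (the contractible group $G$ acts freely with orbits equal to fibers), any path in $U$ lifts to a path in $\pi^{-1}(U)$, so $\pi^{-1}(U)$ is path-connected. Choose such a lift $\gamma$ from $c$ to some point $c''$ in the $G$-orbit of $c'$. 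Then $c$ and $c''$ lie in the same connected component of $\R^{\cA}_\eps \setminus \bigcup_F \tilde{\nabla}_{\cA_F,\eps_F}$, and Step 1 shows $Z(f_{c''})$ is ambient isotopic to $Z(f_{c'})$.

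The remaining ingredient is to conclude that $Z(f_c)$ and $Z(f_{c''})$ are ambient isotopic in $\R^n$. As stated, Proposition \ref{Prop::BihanChambers} only yields a homeomorphism, which is the main obstacle: to upgrade to an ambient isotopy, I would argue along the lifted path $\gamma$ directly. Since $\gamma(t) \notin \tilde{\nabla}_{\cA,\eps}$ for any $t \in [0,1]$, the family $\{Z(f_{\gamma(t)})\}_{t\in [0,1]}$ consists of smooth hypersurfaces with no singular points, and the same holds after restriction to every face of $\Conv(\cA)$ at infinity. Applying Thom's first isotopy lemma (equivalently, Ehresmann's fibration theorem with a controlled behavior at infinity) to the total space $\{(t,x) : f_{\gamma(t)}(x)=0\}$ produces a trivialization that restricts to an ambient isotopy of $\R^n$ carrying $Z(f_c)$ to $Z(f_{c''})$, completing the proof.
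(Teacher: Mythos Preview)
The paper does not supply its own proof of this statement; it is quoted from \cite[Theorem~3.8]{RojasRusek_Adiscriminant}, so there is no in-paper argument to compare against. Your reduction strategy---quotienting out the $\hA^\top$-homogeneity, identifying fibers of $\pi$ with $G$-orbits, showing $G$-invariance of each $\tilde\nabla_{\cA_F,\eps_F}$, and lifting paths back to the full coefficient space---is the correct way to relate the reduced picture to the non-reduced one, and those steps are sound.

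The genuine gap is in the final paragraph. Thom's first isotopy lemma (and Ehresmann's theorem) require the projection $\{(t,x):f_{\gamma(t)}(x)=0\}\to[0,1]$ to be proper, and the zero sets $Z(f_{\gamma(t)})\subseteq\R^n$ are essentially never compact. You gesture at ``controlled behavior at infinity'' via the face conditions, and the intuition is right: avoiding every $\tilde\nabla_{\cA_F,\eps_F}$ is exactly what prevents pieces of the hypersurface from escaping to infinity as $t$ varies. But turning this into a rigorous properness (or stratified-submersion) statement requires an explicit compactification of $\R^n$ adapted to $\cA$---a toric compactification when $\cA\subseteq\mathbb Z^n$, or a more delicate construction for real exponents---together with a check that the closure of the family meets each boundary stratum transversally. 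This is where the integrality hypothesis in Proposition~\ref{Prop::BihanChambers} enters, and it is the substantive content of the result you are citing. As written, your last paragraph is an outline of the right argument rather than a proof; you would need to spell out the compactification and verify transversality along the strata indexed by the faces $F$.
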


\begin{remark}
    \label{Remark:InnerChambers}
    We call a connected component $C$ of $\R^k \setminus \Gamma_\eps(A,B)$ a \emph{signed reduced outer chamber} if there exists $h \in \mathbb{R}^{\mathcal{A}}$ that induces a regular triangulation of $\mathcal{A}$, such that for all sufficiently large $s \in \mathbb{R}$ we have $B^\top(s h)\in C$. In other words, signed reduced outer chambers are precisely the connected components of $\R^k \setminus \Gamma_\eps(A,B)$ where the isotopy type of the corresponding hypersurfaces can be described using Viro's patchworking  (cf. Theorem \ref{Thm:Viro}).

    Otherwise, we call a connected component of  $\R^k \setminus \Gamma_\eps(A,B)$ a \emph{signed reduced inner chamber}. Note that signed reduced inner chambers may not be bounded.
\end{remark}
The signed reduced $A$-discriminant admits a parametrization as well.

\begin{prop}
    Let $(\cA,\eps)$ be a full-dimensional signed support with Gale dual matrix $B$. The image of the map \begin{align}
\label{Eq_SignedRedParam}
 \xi_{B,\eps}\colon \mathcal{C}_{B,\eps} \to \R^k, \quad \lambda \mapsto B^\top \Log \lvert B\lambda \rvert
 \end{align}
 is the signed reduced $A$-discriminant $\Gamma_\eps(A,B)$.
\end{prop}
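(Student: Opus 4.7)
The plan is to derive this parametrization directly from Proposition \ref{Prop_HornKapranov}, which already provides a parametrization $\psi$ of $\nabla_{\cA,\eps}$ itself. Since $\Gamma_\eps(A,B)$ is by definition the image of $\nabla_{\cA,\eps}$ under the composition $B^\top \circ \Log \lvert \cdot \rvert$, it suffices to compute the composition $B^\top \circ \Log \lvert \cdot \rvert \circ \psi$ on $\mathcal{C}_{B,\eps} \times \R^n$ and show that it coincides with $\xi_{B,\eps}$ (in particular, that the dependence on $x$ drops out).

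Concretely, I would first note that for any $\lambda \in \mathcal{C}_{B,\eps}$, the vector $B\lambda$ has sign pattern exactly $\eps \in \{\pm 1\}^{n+k+1}$ and in particular no zero entries, so that $\Log\lvert B\lambda\rvert$ is well-defined. Then using that $\Log$ is coordinate-wise and satisfies $\Log\lvert u \ast v\rvert = \Log\lvert u\rvert + \Log\lvert v\rvert$ on vectors with nonzero entries, one computes
\begin{align*}
\Log \lvert \psi(\lambda,x) \rvert \; = \; \Log \bigl\lvert B\lambda \ast (e^{\alpha_i \cdot (-x)})_{i} \bigr\rvert \; = \; \Log\lvert B\lambda\rvert \; - \; A^\top x,
\end{align*}
where $A$ is the $n \times (n+k+1)$ matrix whose columns are $\alpha_1, \dots, \alpha_{n+k+1}$.

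Next I would apply $B^\top$ and use the Gale duality $\hA B = 0$. Since the rows of $\hA$ consist of a row of ones followed by the rows of $A$, this identity gives $AB = 0$, and therefore $B^\top A^\top = (AB)^\top = 0$. Substituting in the display above yields
\begin{align*}
B^\top \Log \lvert \psi(\lambda,x) \rvert \; = \; B^\top \Log \lvert B\lambda\rvert \; - \; B^\top A^\top x \; = \; B^\top \Log \lvert B\lambda\rvert \; = \; \xi_{B,\eps}(\lambda),
\end{align*}
so the composition is independent of $x$ and equals $\xi_{B,\eps}$ on the first factor.

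Combining this with Proposition \ref{Prop_HornKapranov}, which asserts that $\psi(\mathcal{C}_{B,\eps} \times \R^n) = \nabla_{\cA,\eps}$, we conclude
\begin{align*}
\Gamma_\eps(A,B) \; = \; B^\top \Log \lvert \nabla_{\cA,\eps}\rvert \; = \; B^\top \Log \lvert \psi(\mathcal{C}_{B,\eps} \times \R^n)\rvert \; = \; \xi_{B,\eps}(\mathcal{C}_{B,\eps}) \; = \; \im(\xi_{B,\eps}),
\end{align*}
which is the claim. There is no real obstacle here beyond bookkeeping: the statement is essentially a reformulation of Proposition \ref{Prop_HornKapranov} obtained by quotienting out the action of the torus of coefficient rescalings that preserves $\nabla_{\cA,\eps}$, and the Gale duality identity $\hA B = 0$ is precisely what makes the $x$-dependence vanish after composing with $B^\top \Log\lvert\cdot\rvert$.
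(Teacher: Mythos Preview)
Your proof is correct and follows essentially the same approach as the paper: both compute $B^\top \Log\lvert \psi(\lambda,x)\rvert$, observe that the $x$-dependence vanishes because $\tilde{A}B=0$ (your $A$ is the paper's $\tilde{A}$), and then invoke Proposition~\ref{Prop_HornKapranov}. The paper packages this as a commutative diagram with the projection $\pr_k$, but the underlying computation is the same as yours.
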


\begin{proof}Let $\pr_k \colon \mathcal{C}_{B,\eps} \times \R^n \to \mathcal{C}_{B,\eps}$ be the natural coordinate projection and $\psi$ be the signed Horn-Kapranov Uniformization \eqref{Eq::HornKapranov}. Furthermore, denote $\tilde{A}$ the matrix obtained from $\hA$ in \eqref{Eq::AMatrix} by removing its top row, that is, the columns of $\tilde{A}$ are given by the vectors in $\cA$. For every $(\lambda,x) \in \mathcal{C}_{B,\eps} \times \R^n$, we have
\[\psi(\lambda,x) = B^\top \Log \lvert B\lambda \ast (e^{\alpha_i \cdot (-x)})_{i=1,\dots,n+k+1}\rvert = B^\top \Log \lvert B\lambda \rvert - B^\top \tilde{A}^\top x =  \xi_{B,\eps}(\lambda),\]
where the last equality holds, since $\tilde{A} B = 0$. Thus, the following diagram commutes
\begin{equation}
\label{Eq_ParamDiagram}
\begin{tikzcd}
\mathcal{C}_{B,\eps} \times \R^n  \arrow[d,"\pr_k"]  \arrow[r,"\psi"] & \R^{n+k+1}_\eps \arrow[r,"\Log \lvert \cdot \rvert"]  & \R^{n+k+1} \arrow[d,"B^\top"]  \\
\mathcal{C}_{B,\eps}   \arrow[rr,"\xi_{B,\eps}"] & & \R^k
\end{tikzcd}
\end{equation}
which gives that
\[ \im(\xi_{B,\eps}) = \im(\xi_{B,\eps} \circ \pr_k) = \im(B^\top\circ \Log\lvert \cdot \rvert \circ \psi) = B^\top\Log\lvert \nabla_{\cA,\eps} \rvert = \Gamma_\eps(A,B), \]
where the first equality holds since $\pr_k$ is surjective, and the second-to-last equality follows from Proposition \ref{Prop_HornKapranov}.
\end{proof}

Since the first row of the matrix $\hA$ is given by the all one vector $\mathds{1} \in \R^{n+k+1}$, we have $B^\top \mathds{1} = 0$, which implies that the map $\xi_{B,\eps}$ is homogeneous, i.e. for all $a \in \R$:
\[\xi_{B,\eps}(a\lambda) = B^\top \Log \lvert B ( a\lambda)  \rvert=\log(\lvert a \rvert) B^\top \mathds{1}+ B^\top \Log \lvert B\lambda \rvert = B^\top \Log \lvert B\lambda \rvert = \xi_{B,\eps}(\lambda) .\]
Thus, one could projectivize the domain $\mathcal{C}_{B,\eps} \subseteq \R^{k}$ of $\xi_{B,\eps}$. 

\begin{remark}
\label{Remark:FixB}
If the last row of the Gale dual matrix $B$ is not zero, then using elementary column operations one can assume without any restriction that the last row of $B$ has the form $B_{n+k+1} = (0,\dots,0,-1)$. Such a choice of $B$ fixes the sign $\eps_{n+k+1} = -1$. Since $Z(f_c) = Z(f_{-c})$, we can always fix one of the signs of the coefficients.

Since $\xi_{B,\eps}$ is homogeneous, one can replace $\R^k$   (assuming  $B_{n+k+1} = (0,\dots,0,-1)$) by the upper half of the $(k-1)-$sphere
\[C_{k-1} = \{ \lambda \in \R^k \mid \lVert \lambda \rVert = 1,\, \lambda_k > 0 \},\]
or by the $(k-1)-$dimensional affine subspace
\[  \{ \lambda \in \R^k \mid \lambda_k = 1 \}. \]
In Section \ref{Section_NoInnerChamber}, we will prefer this latter choice and work with the map
\begin{align}
\label{Eq_AffineSignedRedParam}
 \bar{\xi}_{B,\eps}\colon  \left\{ \mu \in \R^{k-1} \mid  \sign(B \begin{bmatrix}\mu \\  1 \end{bmatrix}) = \eps \right\} \to \R^k, \quad \mu \mapsto B^\top \Log \lvert B \begin{bmatrix}\mu \\  1 \end{bmatrix} \rvert.
 \end{align}
 The case when the last row of $B$ is zero will be discussed in more detail in  Assumption~\ref{Assumption}.
\end{remark}

\begin{ex}
\label{Ex:GameChanger}
    To give an illustration of the signed reduced $A$-discriminant, we recall \cite[Example 2.9]{forsgard2017new}. Consider the same set of exponent vectors as in Example \ref{Example_Viro}
    \begin{align}
\label{Eq:GameChangerSupport} \cA = \{ \alpha_1, \, \alpha_2, \, \alpha_3, \, \alpha_4,\, \alpha_5\} = \left\{
  \begin{bmatrix} 0\\ 0 \end{bmatrix},  \, \begin{bmatrix} 1 \\ 0 \end{bmatrix},   \, \begin{bmatrix} 0 \\ 1 \end{bmatrix} ,   \, \begin{bmatrix} 4 \\ 1 \end{bmatrix} ,   \,  \begin{bmatrix} 1 \\ 4 \end{bmatrix} \right\}.
  \end{align}
    Unlike in Example \ref{Example_Viro}, here we consider the sign distribution $\eps = (-1,1,1,-1,-1)$. We depicted the signed support $(\cA,\eps)$ in Figure~\ref{FIG_Example_2_1}(a). Since $\cA$ has codimension $2$, the signed reduced $A$-discriminant $\Gamma_\eps(A,B)$ is in the plane $\R^2$. For an illustration we refer to Figure~\ref{FIG_Example_2_1}(b). The complement of $\Gamma_\eps(A,B)$  has $3$ connected components. For the coefficient vectors $c = (-1,6,3,-1,-1)$,$(-1,1,1,-1,-1)$,$(-1,0.5,1,-1,-1)$, their projection $B^\top\Log\lvert c \rvert$ lies in different connected components of $\R^2 \setminus \Gamma_\eps(A,B)$. The corresponding hypersurfaces $Z(f_c)$ are shown in Figure~\ref{FIG_Example_2_1}(c),(d),(e) respectively. 

    One remarkable property of this particular signed support is that the isotopy type of the hypersurface in Figure~\ref{FIG_Example_2_1}(d), corresponding to the coefficient $c=(-1,1,1-1,-1)$, cannot be obtained by Viro's patchworking (cf. Theorem \ref{Thm:Viro}). All possible signed tropical hypersurfaces $\Trop_\eps(\cA,h)$, with $h\in \R^{\cA}$ generic, consist of  $2$ unbounded connected components by \cite[Theorem 4.5]{BihanBound}, but the hypersurface $Z(f_c), \,  c=(-1,1,1-1,-1)$ has $3$ connected components, one bounded and two unbounded. 
\end{ex}

\begin{figure}[t]
\centering
\begin{minipage}[h]{0.4\textwidth}
\centering
\includegraphics[scale=0.42]{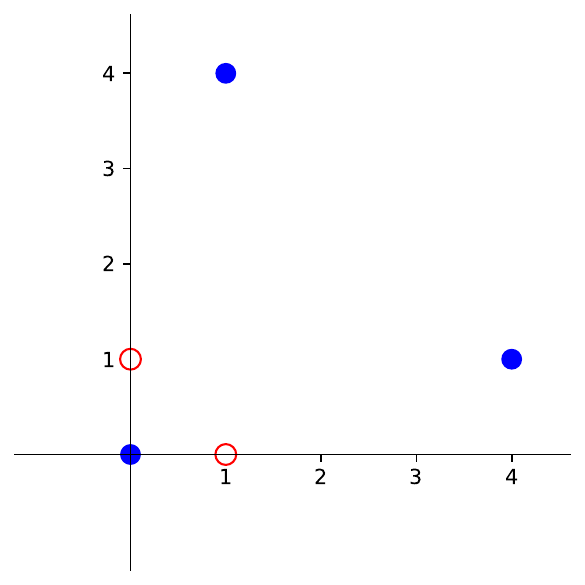}

{\small (a)}
\end{minipage}
\begin{minipage}[h]{0.4\textwidth}
\centering
\includegraphics[scale=0.3]{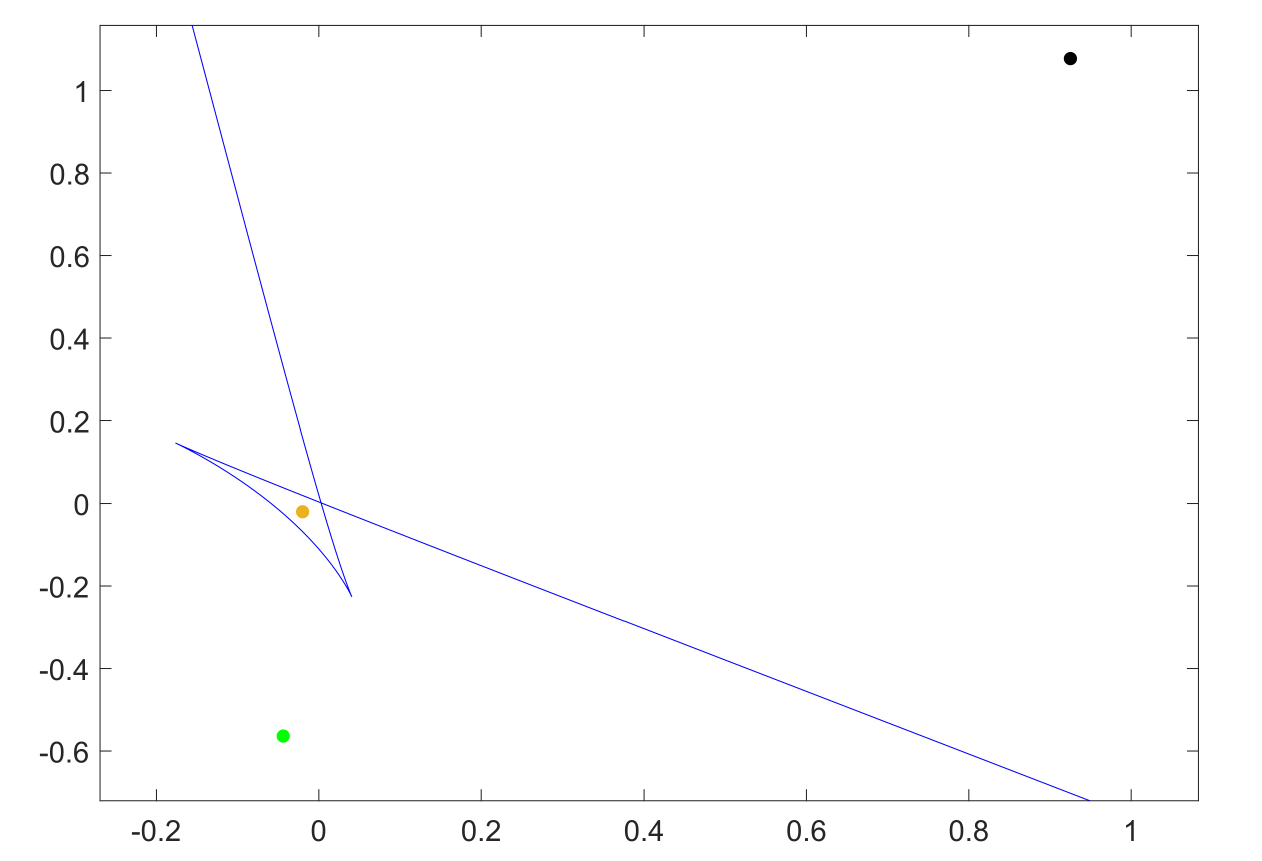}

{\small (b)}
\end{minipage}

\begin{minipage}[h]{0.3\textwidth}
\centering
\includegraphics[scale=0.35]{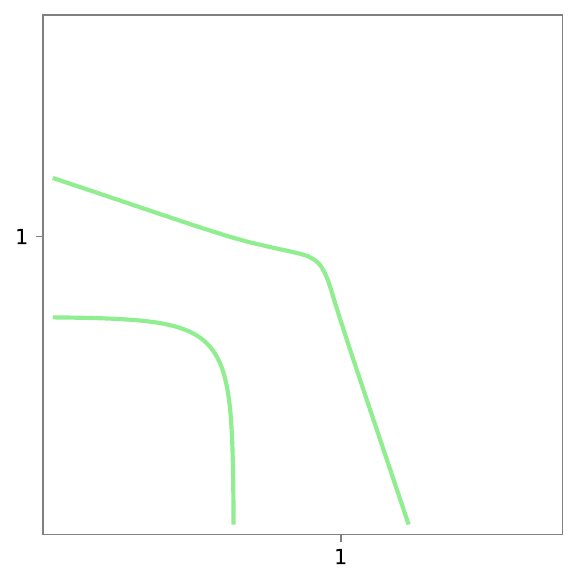}

{\small (c)}
\end{minipage}
\begin{minipage}[h]{0.3\textwidth}
\centering
\includegraphics[scale=0.35]{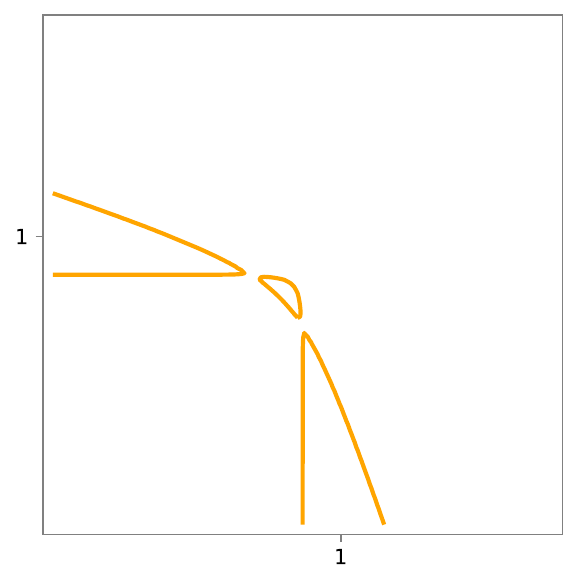}

{\small (d)}
\end{minipage}
\begin{minipage}[h]{0.3\textwidth}
\centering
\includegraphics[scale=0.35]{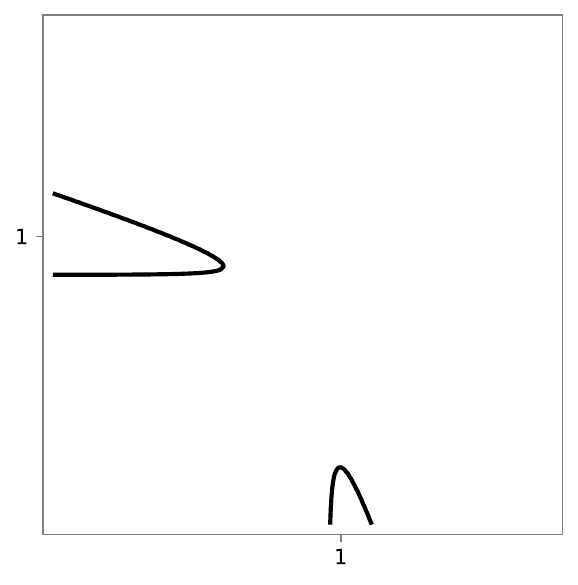}

{\small (e)}
\end{minipage}
\caption{{\small (a) Signed support from Example \ref{Ex:GameChanger}. The positive and negative exponent vectors are depicted by red circles and blue dots respectively. (b) Signed reduced $A$-discriminant of the signed support in (a). (c)(d)(e) Hypersurfaces $Z(f_c)$ corresponding to different connected components of the complement of the signed reduced $A$-discriminant. }}\label{FIG_Example_2_1}
\end{figure}

\subsection{Some useful results from topology}
\label{Sec:ResFromTop}
In the proof of Proposition \ref{Prop_NoCrit_NoInnerChamber}, we need some classical results from topology. Let us also introduce these briefly here (see, for example, Chapter \uppercase\expandafter{\romannumeral1}.11 \& Chapter \uppercase\expandafter{\romannumeral4}.19 in \cite{Bredon_book}).

\begin{lemma}
\label{lem:proper}
  Suppose that $X$ and $Y$ are locally compact, Hausdorff spaces and that $f\colon X\to Y$ is continuous. Let $X^+:=X\sqcup \{\infty_X\}$ be the one-point compactification space of $X$. Then $f$ is proper (i.e., the preimage of any compact subset is compact) $\Longleftrightarrow$ $f$ extends to a continuous map $f^+\colon X^+\to Y^+$ by setting $f^+(\infty_X)=\infty_Y$.
\end{lemma}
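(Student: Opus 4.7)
The plan is to prove both directions by unwinding the definition of the topology on the one-point compactification: a subset $U \subseteq X^+$ is open iff either $U \subseteq X$ is open in $X$, or $U = \{\infty_X\} \cup (X \setminus K)$ for some compact $K \subseteq X$ (and similarly for $Y^+$). Continuity at points of $X$ will always be automatic since the inclusion $X \hookrightarrow X^+$ is an embedding and $f^+|_X = f$; the only thing that needs care is continuity at $\infty_X$, and this is where properness enters.

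For the implication ``$f$ proper $\Rightarrow f^+$ continuous,'' I would define $f^+$ by $f^+|_X = f$ and $f^+(\infty_X) = \infty_Y$, then check continuity at $\infty_X$. A basic open neighborhood of $\infty_Y$ has the form $V = \{\infty_Y\} \cup (Y \setminus K)$ with $K \subseteq Y$ compact. Its preimage under $f^+$ is $\{\infty_X\} \cup (X \setminus f^{-1}(K))$. Properness gives $f^{-1}(K)$ compact in $X$, and since $X$ is Hausdorff locally compact, $f^{-1}(K)$ is closed in $X^+$; hence its complement is an open neighborhood of $\infty_X$, as required.

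For the reverse implication, suppose $f^+\colon X^+ \to Y^+$ is continuous. Given a compact $K \subseteq Y$, local compactness and Hausdorffness of $Y$ imply $K$ is closed in $Y^+$, so $Y^+ \setminus K$ is an open neighborhood of $\infty_Y$. Its preimage $(f^+)^{-1}(Y^+ \setminus K)$ is open in $X^+$ and contains $\infty_X$; its complement is $(f^+)^{-1}(K)$, which equals $f^{-1}(K)$ because $f^+(\infty_X) = \infty_Y \notin K$. A closed subset of the compact space $X^+$ is compact, so $f^{-1}(K)$ is compact in $X^+$, and being contained in $X$ it is compact in $X$.

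The main (very mild) obstacle is invoking the right hypotheses at the right moment: local compactness plus Hausdorff is needed so that compact subsets are closed, which is what lets one translate between ``open neighborhoods of the point at infinity'' and ``complements of compact sets.'' Everything else is formal set-theoretic bookkeeping. Since this is a standard fact from point-set topology, I would likely just cite \cite{Bredon_book} rather than reproduce the argument in full.
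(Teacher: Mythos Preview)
Your argument is correct, and in fact the paper does exactly what you suggest in your last sentence: it states the lemma without proof and refers the reader to \cite{Bredon_book}. So there is nothing to compare---your sketch supplies more detail than the paper itself, and your fallback of simply citing the reference matches the paper's approach exactly.
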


\begin{lemma}(Jordan-Brouwer Separation Theorem)\label{lem:Sep}
  If $f\colon \mathbf{S}^{n-1}\to \mathbf{S}^{n}$ (where $\mathbf{S}^{n}$ denotes $n$-sphere) is an injective continuous map, then $\mathbf{S}^{n} \setminus f(\mathbf{S}^{n-1})$ consists of exactly two connected components. Moreover, $f(\mathbf{S}^{n-1})$ is the topological boundary of each of these components.
\end{lemma}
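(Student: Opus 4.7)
The plan is to prove this classical separation theorem by computing reduced singular homology, and then handle the boundary assertion by a separate cell-swapping argument. The target for the first claim is to show $\tilde{H}_0(\mathbf{S}^n \setminus f(\mathbf{S}^{n-1})) \cong \mathbb{Z}$; since the complement is open in $\mathbf{S}^n$ and hence locally path-connected, this is equivalent to having exactly two connected components.

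The first ingredient I would establish is the \emph{cell lemma}: for every topological embedding $g\colon I^k \hookrightarrow \mathbf{S}^n$, the complement $\mathbf{S}^n \setminus g(I^k)$ has trivial reduced homology in all degrees. I would prove this by induction on $k$, with base case $k=0$ being that $\mathbf{S}^n$ minus a point is contractible. For the inductive step, bisect $I^k = I^{k-1}\times[0,1]$ into two closed half-cells $C_\pm$, apply Mayer--Vietoris to the cover $\{\mathbf{S}^n \setminus g(C_+),\, \mathbf{S}^n \setminus g(C_-)\}$ of $\mathbf{S}^n \setminus g(I^{k-1}\times\{1/2\})$, and observe that if a reduced cycle on the left were nontrivial, it would restrict nontrivially to one of the halves. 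Iterating the bisection produces a nested sequence of cells whose intersection is an embedded $(k-1)$-cell; compactness of the supporting singular cycle together with the inductive hypothesis forces the original class to vanish.

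The second ingredient is the \emph{separation isomorphism}. Decompose $\mathbf{S}^{n-1}$ as the union of two closed hemispheres $D_\pm$, each an $(n-1)$-cell, intersecting in the equator $\mathbf{S}^{n-2}$. Applying Mayer--Vietoris to the open cover $\{\mathbf{S}^n \setminus f(D_+),\, \mathbf{S}^n \setminus f(D_-)\}$ of $\mathbf{S}^n \setminus f(\mathbf{S}^{n-2})$, the cell lemma annihilates the reduced homology of each piece and yields
\[
\tilde{H}_i\bigl(\mathbf{S}^n \setminus f(\mathbf{S}^{n-1})\bigr) \;\cong\; \tilde{H}_{i-1}\bigl(\mathbf{S}^n \setminus f(\mathbf{S}^{n-2})\bigr).
\]
Induction on the sphere dimension reduces to an embedded $\mathbf{S}^0$ (a pair of points), whose complement deformation retracts onto $\mathbf{S}^{n-1}$. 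This delivers $\tilde{H}_0(\mathbf{S}^n \setminus f(\mathbf{S}^{n-1})) \cong \mathbb{Z}$ and therefore exactly two components.

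For the boundary statement, denote the two components by $U$ and $V$; both are open in $\mathbf{S}^n$, so their topological boundaries lie in the closed set $\mathbf{S}^n \setminus (U\cup V) = f(\mathbf{S}^{n-1})$. The main obstacle is the reverse inclusion: every $p \in f(\mathbf{S}^{n-1})$ must lie in both $\bar U$ and $\bar V$. Suppose for contradiction that $p \notin \bar V$, pick an open neighborhood $W$ of $p$ with $W \cap V = \emptyset$, and choose a closed $(n-1)$-cell $E \subset f(\mathbf{S}^{n-1})$ with $p \in \inte(E)$ and $E \subset W$ such that $E' := f(\mathbf{S}^{n-1}) \setminus \inte(E)$ is also an $(n-1)$-cell (concretely, take $E = f(B)$ for $B$ a small closed round ball around $f^{-1}(p)$ in $\mathbf{S}^{n-1}$). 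Then
\[
\mathbf{S}^n \setminus E' \;=\; U \,\cup\, V \,\cup\, \inte(E),
\]
and since $\inte(E) \subset W$ is disjoint from $\bar V$, one checks that $V$ is both open and closed in $\mathbf{S}^n \setminus E'$. But the cell lemma forces $\mathbf{S}^n \setminus E'$ to be connected, so $V$ must be either empty or equal to the whole space, each contradicting nonemptiness of $U$ and $V$. By symmetry every $p \in f(\mathbf{S}^{n-1})$ lies in $\bar U \cap \bar V$, completing the proof.
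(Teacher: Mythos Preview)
The paper does not prove this lemma at all: it is quoted as a classical result with a reference to Bredon's \emph{Topology and Geometry}, Chapter~IV.19, and then used as a black box for Corollary~\ref{cor:unbounded components}. Your write-up is essentially the standard proof one finds in that reference (or in Hatcher, Proposition~2B.1), so in substance you have reproduced exactly what the paper outsources.

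One correction, though: your displayed Mayer--Vietoris isomorphism has the degree shift going the wrong way. With the cover $\{\mathbf{S}^n\setminus f(D_+),\,\mathbf{S}^n\setminus f(D_-)\}$, the \emph{union} is $\mathbf{S}^n\setminus f(\mathbf{S}^{n-2})$ and the \emph{intersection} is $\mathbf{S}^n\setminus f(\mathbf{S}^{n-1})$, so the connecting map yields
\[
\tilde{H}_i\bigl(\mathbf{S}^n\setminus f(\mathbf{S}^{n-1})\bigr)\;\cong\;\tilde{H}_{i+1}\bigl(\mathbf{S}^n\setminus f(\mathbf{S}^{n-2})\bigr),
\]
not $\tilde{H}_{i-1}$. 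Taken literally, your formula would send $\tilde{H}_0$ to negative degrees and give zero. With the corrected shift, the induction terminates at $\tilde{H}_{n-1}\bigl(\mathbf{S}^n\setminus f(\mathbf{S}^0)\bigr)\cong\tilde{H}_{n-1}(\mathbf{S}^{n-1})\cong\mathbb{Z}$, as you intend. Your boundary argument is fine; note that since $W$ is open and $W\cap V=\emptyset$, automatically $W\cap\bar V=\emptyset$, which is what you actually need to conclude that $V$ is closed in $\mathbf{S}^n\setminus E'$.
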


\begin{cor}
\label{cor:unbounded components}
  If $f\colon \R^n\to \R^{n+1}$ is injective, continuous and proper, then $\R^{n+1} \setminus f(\R^n)$ consists of exactly two unbounded connected components.
\end{cor}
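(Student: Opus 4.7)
The natural strategy is to compactify and reduce to the Jordan--Brouwer Separation Theorem (Lemma~\ref{lem:Sep}). Both $\R^n$ and $\R^{n+1}$ are locally compact Hausdorff, with one-point compactifications homeomorphic to $\mathbf{S}^n$ and $\mathbf{S}^{n+1}$, respectively. Since $f$ is assumed proper, Lemma~\ref{lem:proper} produces a continuous extension $f^+\colon \mathbf{S}^n\to\mathbf{S}^{n+1}$ sending the point at infinity of $\R^n$ to the point at infinity of $\R^{n+1}$.

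The first thing to verify is that $f^+$ is injective. Injectivity of $f$ handles pairs of points in $\R^n$; and since $f(\R^n)\subseteq\R^{n+1}$ avoids $\infty_{\R^{n+1}}$, the extended map cannot identify the added point with anything else. Hence $f^+\colon \mathbf{S}^n\to\mathbf{S}^{n+1}$ is an injective continuous map, so Lemma~\ref{lem:Sep} applies: the complement $\mathbf{S}^{n+1}\setminus f^+(\mathbf{S}^n)$ consists of exactly two connected components $U_1, U_2$, each of which has $f^+(\mathbf{S}^n)$ as its topological boundary.

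Next I would translate this back to $\R^{n+1}$. Note
\[
f^+(\mathbf{S}^n)=f(\R^n)\cup\{\infty_{\R^{n+1}}\},
\]
so removing $f^+(\mathbf{S}^n)$ from $\mathbf{S}^{n+1}=\R^{n+1}\cup\{\infty_{\R^{n+1}}\}$ yields exactly $\R^{n+1}\setminus f(\R^n)$. Since this set sits inside $\R^{n+1}$ as an open subspace of $\mathbf{S}^{n+1}$ with the subspace topology, its two components (as a subspace of $\mathbf{S}^{n+1}$) coincide with its two components as a subspace of $\R^{n+1}$. This gives the count of two components.

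Finally, I would argue unboundedness. By the ``moreover'' clause of Lemma~\ref{lem:Sep}, the point $\infty_{\R^{n+1}}\in f^+(\mathbf{S}^n)$ lies in the topological boundary of both $U_1$ and $U_2$ in $\mathbf{S}^{n+1}$. Hence every neighborhood of $\infty_{\R^{n+1}}$ meets each $U_i$, and since neighborhoods of $\infty_{\R^{n+1}}$ are precisely complements of compact subsets of $\R^{n+1}$, no compact set of $\R^{n+1}$ contains $U_i\cap\R^{n+1}$. Thus both components are unbounded, completing the proof. The one delicate point is the verification that $f^+$ remains injective after adding the point at infinity, which is the only place where the hypothesis $f(\R^n)\subseteq\R^{n+1}$ (as opposed to $\mathbf{S}^{n+1}$) is used; everything else is a straightforward application of the two preceding lemmas.
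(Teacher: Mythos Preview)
Your proof is correct and follows essentially the same approach as the paper's: compactify via Lemma~\ref{lem:proper}, apply Jordan--Brouwer (Lemma~\ref{lem:Sep}) to the extended map $f^+\colon\mathbf{S}^n\to\mathbf{S}^{n+1}$, and use that $\infty_{\R^{n+1}}$ lies in the boundary of both components to conclude unboundedness. If anything, your version is more careful in spelling out why $f^+$ remains injective and why the complement in $\mathbf{S}^{n+1}$ agrees with $\R^{n+1}\setminus f(\R^n)$.
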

\begin{proof}
  Note that the one point compactification of $\R^n$ is $\mathbf{S}^{n}$. By Lemma \ref{lem:proper}, $f$ can be extended to $f^+\colon \mathbf{S}^{n}\to \mathbf{S}^{n+1}$ with $f^+(\infty)=\infty$. Then $f^+$ is also injective. By Lemma \ref{lem:Sep}, $\mathbf{S}^{n+1} \setminus f^+(\mathbf{S}^{n})$ consists of two connected components and the point $\infty$ is in the boundary. Since the point $\infty$ is in the boundary of each component, we can always find a sequence $\{x_l\}\subseteq \R^{n+1}=\mathbf{S}^{n+1}\setminus \{\infty\}$ in each of the components such that $x_l\to \infty$. Therefore, these two components are unbounded in $\R^{n+1}$.
\end{proof}

Finally, we need the following version of the mean value theorem:

\begin{lemma}(Cauchy's Mean Value Theorem)
\label{lem:CMVT}
If the functions $f, g \colon [a,b] \to \R$ are both continuous and differentiable on the open interval $(a,b)$, then there exists some $c\in (a,b)$, such that
\[
(f(b)-f(a))g'(c)=(g(b)-g(a))f'(c).
\]
\end{lemma}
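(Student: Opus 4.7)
The plan is to reduce Cauchy's Mean Value Theorem to Rolle's theorem by constructing an auxiliary function that vanishes at the same value at both endpoints $a$ and $b$. The natural candidate is
\[ h\colon [a,b] \to \R, \quad h(x) := (f(b)-f(a))\, g(x) - (g(b)-g(a))\, f(x), \]
which is a linear combination of $f$ and $g$ and therefore inherits continuity on $[a,b]$ and differentiability on $(a,b)$ from the hypotheses.

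First, I would compute the values at the endpoints. A direct expansion gives
\[ h(a) = (f(b)-f(a))g(a) - (g(b)-g(a))f(a) = f(b)g(a) - f(a)g(b), \]
and symmetrically
\[ h(b) = (f(b)-f(a))g(b) - (g(b)-g(a))f(b) = f(b)g(a) - f(a)g(b), \]
so that $h(a) = h(b)$.

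Next, I would invoke Rolle's theorem on $h$: since $h$ is continuous on $[a,b]$, differentiable on $(a,b)$, and satisfies $h(a)=h(b)$, there exists $c \in (a,b)$ with $h'(c) = 0$. Differentiating $h$ term by term yields
\[ h'(c) = (f(b)-f(a))\, g'(c) - (g(b)-g(a))\, f'(c), \]
and equating this to zero delivers exactly the claimed identity.

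There is essentially no obstacle in this argument, so long as Rolle's theorem is taken as known; the only subtlety is the choice of auxiliary function, and the identity $h(a)=h(b)$ is precisely what motivates it. Note that no nonvanishing assumption on $g'$ or on $g(b)-g(a)$ is needed for this formulation, since the statement is an equality of products rather than a quotient.
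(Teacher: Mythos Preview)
Your proof is correct and is the standard textbook argument via Rolle's theorem. The paper itself does not prove this lemma; it is stated as a classical result without proof, so there is no approach to compare against.
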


\section{Signed supports without singular zeros}

In this section, we give a necessary and sufficient condition on the signed support $(\cA,\eps)$ such that the signed $A$-discriminant $\nabla_{\cA,\eps}$ is empty.  Building on this result and  Theorem \ref{Thm::RojasRusek_Adiscriminant}, we give conditions on $(\cA,\eps)$ such that for all $c \in \R_\eps^\cA$ the hypersurfaces $Z(f_c)$ have the same isotopy type (Theorem \ref{Thm::SepHyperplane}). First, we start with a simple observation.

\begin{prop}
\label{Prop_SolAndKernel}
Let $\cA = \{ \alpha_1, \dots, \alpha_{n+k+1} \} \subseteq \R^n$ be a set of exponent vectors and $\eps \in \{ \pm 1 \}^{n+k+1}$ be a fixed sign distribution. Let $\hA$ the matrix defined in \eqref{Eq::AMatrix}.
\begin{itemize}
\item[(a)] If $\ker( \hA) \cap \R_{\eps}^{n+k+1} = \ker( \hA_\eps) \cap \R_{>0}^{n+k+1} = \emptyset$, then for all $c \in \R_{\eps}^{n+k+1}$ the critical system \eqref{Eq_FullCriticalSystem} does not have any solution $x \in \R^{n}$.
\item[(b)] If $\ker( \hA_\eps) \cap \R_{>0}^{n+k+1} \neq \emptyset$, then there exists $c \in \R_{\eps}^{n+k+1}$ such that the critical system has a solution $x \in \R^{n}$.
\end{itemize}
\end{prop}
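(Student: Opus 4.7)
The plan is to observe that the critical system \eqref{Eq_FullCriticalSystem} is really a sign-constrained linear system in disguise: the unknown vector $\diag(c)\bigl(e^{\alpha_i\cdot x}\bigr)_{i=1,\dots,n+k+1}$ must lie in $\ker(\hA)$, and its sign pattern is forced to be $\eps$ because each $e^{\alpha_i\cdot x}$ is strictly positive. Part (a) is then the contrapositive of an existence statement, and part (b) is a direct construction by choosing $x=0$.

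For part (a), I would suppose, for contradiction, that there exist $c\in\R_{\eps}^{n+k+1}$ and $x\in\R^{n}$ solving \eqref{Eq_FullCriticalSystem}. Writing $w:=\diag(c)\bigl(e^{\alpha_i\cdot x}\bigr)_{i}$, every coordinate of $w$ is a product of $c_i$ and the strictly positive number $e^{\alpha_i\cdot x}$, so $\sign(w)=\sign(c)=\eps$, i.e.\ $w\in\R_{\eps}^{n+k+1}$. By the critical system, $\hA w=0$, so $w\in\ker(\hA)\cap\R_{\eps}^{n+k+1}$, contradicting the hypothesis. (The equivalent assumption $\ker(\hA_\eps)\cap\R_{>0}^{n+k+1}=\emptyset$ is justified by the bijection $v\mapsto\diag(\eps)v$ between the two sets, using $\hA_\eps=\hA\diag(\eps)$ and $\diag(\eps)^2=I$; I will mention this briefly for completeness.)

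For part (b), I would pick any $v\in\ker(\hA_\eps)\cap\R_{>0}^{n+k+1}$ and set $c:=\diag(\eps)\,v$. Since $v$ has strictly positive entries, $\sign(c)=\eps$, so $c\in\R_{\eps}^{n+k+1}$. Taking $x=0$, the vector $\bigl(e^{\alpha_i\cdot 0}\bigr)_i$ is the all-ones vector, so
\[
\hA\,\diag(c)\bigl(e^{\alpha_i\cdot 0}\bigr)_{i}=\hA c=\hA\diag(\eps)\,v=\hA_\eps v=0,
\]
exhibiting $x=0$ as a solution of the critical system.

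There is no real obstacle here; the only subtlety worth flagging is the equivalence between the two kernel conditions in the hypothesis of (a), which is an immediate consequence of $\hA_\eps=\hA\diag(\eps)$ and involution of $\diag(\eps)$. Everything else is a short sign-tracking argument.
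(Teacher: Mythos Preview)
Your proof is correct and follows essentially the same approach as the paper: for (a) you observe that a solution would force $\diag(c)\bigl(e^{\alpha_i\cdot x}\bigr)_i$ into $\ker(\hA)\cap\R_\eps^{n+k+1}$, and for (b) you construct $c=\diag(\eps)v$ with $x=0$. Your version is slightly more explicit in justifying the equivalence of the two kernel conditions, which the paper leaves implicit.
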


\begin{proof} Part (a) follows directly, since for any $c \in \R^{n+k+1}_{\eps}$ and any solution $x\in \R^n$ of  \eqref{Eq_FullCriticalSystem}, we have $\diag(c)\big(e^{\alpha_i \cdot x}\big)_{i=1,\dots,n+k+1} \in \ker(\hA) \cap \R_{\eps}^{n+k+1}$.

If $v \in \ker( \hA_\eps) \cap \R_{>0}^{n+k+1}$, then for $c=\diag(\eps)  v$, the point $x = (0,\dots,0)$ is a solution of \eqref{Eq_FullCriticalSystem}.
\end{proof}

In the following, we interpret the conditions in Proposition \ref{Prop_SolAndKernel} in terms of the geometry of the support $\cA$ and the sign distribution $\eps$. An \emph{affine hyperplane} is a set of the form
\[ \mathcal{H}_{v,a} := \{ \mu \in \R^n \mid v \cdot \mu = a \},\]
for some $v \in \R^n \setminus \{ 0 \}$ and $a \in \R$. Each affine hyperplane defines two half-spaces
\[ \mathcal{H}_{v,a}^+ := \{ \mu \in \R^n \mid v \cdot \mu \geq a \}, \quad \mathcal{H}_{v,a}^- := \{ \mu \in \R^n \mid v \cdot \mu \leq a \}.\]
Following \cite{DescartesHypPlane}, we call $ \mathcal{H}_{v,a}$ a \emph{separating hyperplane} of $(\cA,\eps)$ if
\begin{align}
\label{Eq_SepVectorIn}
\cA_+ \subseteq  \mathcal{H}_{v,a}^+, \quad \text{and} \quad  \cA_- \subseteq  \mathcal{H}_{v,a}^-.
\end{align}
A separating hyperplane $ \mathcal{H}_{v,a}$ is called \emph{non-trivial}, if at least one of the open half-spaces $\inte(\mathcal{H}_{v,a}^{+}), \, \inte(\mathcal{H}_{v,a}^{-})$ contains a point of $\cA$. A non-trivial separating hyperplane is called  \emph{very strict} if $\mathcal{H}_{v,a}$ does not contain any point in $\cA$. For an illustration of separating hyperplanes, we refer to Figure \ref{SepHyperplane}.

\begin{figure}[t]
\centering
\includegraphics[scale=0.5]{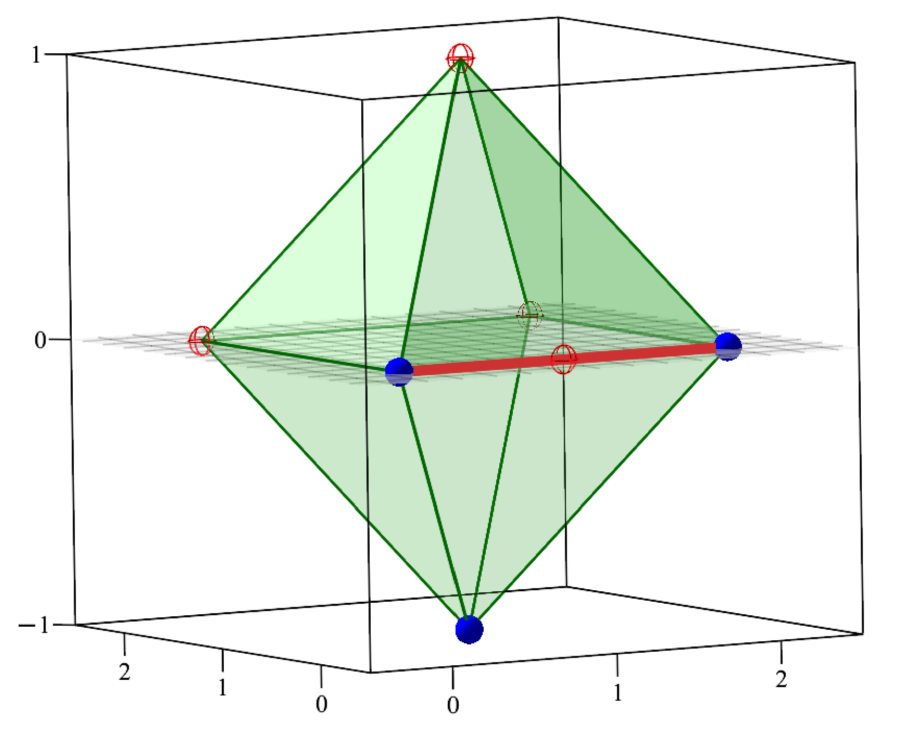}
\caption{{\small The hyperplane $\mathcal{H}_{v,0}$ with $v = (0,0,1)$ is a non-trivial separating hyperplane of $\cA_+ = \big\{ (1,0,0)^\top,(2,2,0)^\top,(0,2,0)^\top,(1,1,1)^\top\big\}$ (depicted as red circles) and $\cA_- = \big\{ (0,0,0)^\top,(2,0,0)^\top,(1,1,-1)^\top\big\}$ (blue dots). For the face $F = \Conv\big( (0,0,0)^\top,(0,0,2)^\top \big)$ (marked by thick line segment), the restricted signed support $\cA_{F,+} = \big\{ (1,0,0)^\top\big\},\, \cA_{F,-} = \big\{(0,0,0)^\top,(2,0,0)^\top \big\}$ does not have any non-trivial separating hyperplane.
}}\label{SepHyperplane}
\end{figure}

\begin{prop}
\label{Prop_SepHypAndKernel} A signed support $(\cA, \eps)$ has a non-trivial separating hyperplane if and only if $\ker( \hA_\eps) \cap \R_{>0}^{n+k+1} = \emptyset$, where $\hA \in \R^{(n+1) \times (n+k+1)}$ denotes the matrix from \eqref{Eq::AMatrix}.
\end{prop}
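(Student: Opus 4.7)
The plan is to recognize this equivalence as an instance of a Gordan-type theorem of the alternative, and to handle both directions through the same linear-algebraic dictionary. The key observation is that the $i$-th column of $\hA_\eps$ is $\eps_i(1,\alpha_i^\top)^\top\in\R^{n+1}$, so for any $u=(-a,v^\top)^\top \in \R^{n+1}$ the $i$-th entry of $u^\top\hA_\eps$ equals $\eps_i(v\cdot\alpha_i - a)$. Under this dictionary, the pair $(v,a)$ defines a separating hyperplane $\mathcal{H}_{v,a}$ of $(\cA,\eps)$ exactly when $u^\top\hA_\eps$ is componentwise nonnegative, and the separation is non-trivial exactly when at least one coordinate is strictly positive.

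For the easy direction ($\Leftarrow$), I would assume that $\mathcal{H}_{v,a}$ is a non-trivial separating hyperplane with associated $u$ as above, and suppose for contradiction that some $w\in\ker(\hA_\eps)\cap\R^{n+k+1}_{>0}$ exists. Then
\begin{align*}
0 \;=\; u^\top\hA_\eps\, w \;=\; \sum_{i=1}^{n+k+1} w_i\,\eps_i(v\cdot\alpha_i - a).
\end{align*}
By the separation inequalities every summand is nonnegative, so each vanishes; since every $w_i>0$, this forces $v\cdot\alpha_i = a$ for every $i$, placing all of $\cA$ on $\mathcal{H}_{v,a}$ and contradicting non-triviality.

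For the harder direction ($\Rightarrow$), assume $\ker(\hA_\eps)\cap\R^{n+k+1}_{>0}=\emptyset$. When $\cA_+$ or $\cA_-$ is empty, a non-trivial separating hyperplane can be exhibited by hand by choosing any $v\neq 0$ and $a$ so that all of $\cA$ lies strictly inside one open half-space. Otherwise I would apply the finite-dimensional geometric Hahn--Banach separation theorem to the linear subspace $\ker(\hA_\eps)$ and the nonempty open convex set $\R^{n+k+1}_{>0}$ to obtain a nonzero linear functional $\ell$ that vanishes on $\ker(\hA_\eps)$ and is nonnegative on the closed orthant $\R^{n+k+1}_{\geq 0}$. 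Testing $\ell$ on standard basis vectors yields componentwise nonnegativity, and vanishing on $\ker(\hA_\eps)$ places $\ell$ in the row span of $\hA_\eps$, so $\ell^\top = u^\top \hA_\eps$ for some $u=(-a,v^\top)^\top\in\R^{n+1}$, which is the candidate $(v,a)$.

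The main obstacle is ensuring $v\neq 0$, so that $\mathcal{H}_{v,a}$ is an honest hyperplane. If $v=0$, then $\ell_i = -a\eps_i$; because both $\cA_+$ and $\cA_-$ are nonempty in the substantive case, nonnegativity across indices of both signs forces $a=0$, hence $\ell=0$, contradicting $\ell\neq 0$. Therefore $v\neq 0$, and since $u^\top\hA_\eps$ has at least one strictly positive entry (as $\ell\neq 0$), the hyperplane $\mathcal{H}_{v,a}$ is indeed a non-trivial separating hyperplane of $(\cA,\eps)$, completing the proof.
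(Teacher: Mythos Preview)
Your proof is correct and follows essentially the same route as the paper. The paper invokes Stiemke's Theorem (a Farkas-type alternative) as a black box: exactly one of (a) $\hA_\eps^\top w \ge 0$ with at least one strict inequality for some $w\in\R^{n+1}$, or (b) $\hA_\eps u = 0$ for some $u\in\R^{n+k+1}_{>0}$, holds; it then translates (a) into the hyperplane language via the same dictionary $w=(-a,v^\top)^\top$ you set up. Your argument reproduces this alternative by hand, proving one implication by the pairing $u^\top\hA_\eps w = 0$ and the other by Hahn--Banach separation of $\ker(\hA_\eps)$ from the open orthant. One small advantage of your write-up: you explicitly check that the resulting $v$ is nonzero (handling separately the degenerate case where all signs agree), a detail the paper's proof leaves implicit.
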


\begin{proof}
By Stiemke's Theorem \cite{Stiemke1915berPL} (which is a refinement of Farkas' Lemma, see also \cite[Section 6.2]{Ziegler_book}), exactly one of the following holds. Either there exists $w \in \R^{n+1}$ such that
\begin{align}
\label{Eq_SepHypProof1}
\hA_\eps^\top w \geq 0,
\end{align}
and at least one of the inequalities is strict, or there exists $u \in \R_{>0}^{n+k+1}$ such that
\begin{align}
\label{Eq_SepHypProof2}
\hA_\eps u = 0.
\end{align}
(The conditions \eqref{Eq_SepHypProof1} and \eqref{Eq_SepHypProof2} can not both hold.) Condition \eqref{Eq_SepHypProof2} is equivalent to $\ker(\hA_\eps) \cap  \R_{>0}^{n+k+1} \neq \emptyset$.  Note that one can rewrite  \eqref{Eq_SepHypProof1} as
\[ \eps_i ((w_2, \dots, w_{n+1}) \cdot \alpha_i) \geq \eps_i(-w_{1}),\]
for all $\alpha_i \in \cA$. Thus, if such a $w$ exists, then $\mathcal{H}_{v,a}$ with $v=(w_2, \dots ,w_{n+1}), \, a = -w_{1}$ is a non-trivial separating hyperplane of $(\cA,\eps)$. On the other hand, if $\mathcal{H}_{v,a}$ is a non-trivial hyperplane of  $(\cA,\eps)$, then $w = (-a,v)$ satisfies \eqref{Eq_SepHypProof1}.
\end{proof}

\begin{thm}
\label{Thm::SepHyperplaneAdisc}
Let $(\cA,\eps)$ be a signed support with Gale dual matrix $B$. Then the following are equivalent:
\begin{itemize}
    \item[(i)] $\nabla_{\cA,\eps} = \emptyset$
    \item[(ii)]  $\Gamma_\eps(A,B) = \emptyset$
    \item[(iii)] $(\cA,\eps)$ has a non-trivial separating hyperplane.
\end{itemize}
\end{thm}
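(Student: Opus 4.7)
The plan is to observe that the three conditions can be chained together via the algebraic condition $\ker(\hA_\eps) \cap \R^{n+k+1}_{>0} = \emptyset$, using the propositions already proved.

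First I would dispatch the equivalence (i) $\Leftrightarrow$ (ii) directly from the definition $\Gamma_\eps(A,B) = B^\top \Log|\nabla_{\cA,\eps}|$. Since the coordinate-wise map $c \mapsto B^\top\Log|c|$ is everywhere defined on $\R^{\cA}_\eps$ (no coordinate vanishes), its image is empty if and only if its domain of definition intersected with $\nabla_{\cA,\eps}$ is empty, which is to say $\nabla_{\cA,\eps}=\emptyset$. This leaves the equivalence of (i) and (iii).

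For (i) $\Leftrightarrow$ (iii), I would pass through the intermediate statement
\[
(\ast)\quad \ker(\hA_\eps) \cap \R^{n+k+1}_{>0} = \emptyset.
\]
Proposition \ref{Prop_SepHypAndKernel} already gives (iii) $\Leftrightarrow$ $(\ast)$ directly, so the task reduces to establishing (i) $\Leftrightarrow$ $(\ast)$. Before invoking Proposition \ref{Prop_SolAndKernel}, I would observe that the two kernel conditions appearing in part (a) of that proposition are in fact equivalent: the linear involution $v \mapsto \diag(\eps)v$ is a bijection between $\ker(\hA_\eps) \cap \R^{n+k+1}_{>0}$ and $\ker(\hA) \cap \R^{n+k+1}_\eps$, because $\hA_\eps = \hA\,\diag(\eps)$ and $\diag(\eps)^2 = I_{n+k+1}$, and it swaps the orthants $\R^{n+k+1}_{>0}$ and $\R^{\cA}_\eps$ setwise. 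Thus $(\ast)$ is really a single condition.

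Assuming $(\ast)$, Proposition \ref{Prop_SolAndKernel}(a) tells us that the critical system \eqref{Eq_FullCriticalSystem} has no solution $x \in \R^n$ for any $c \in \R^{\cA}_\eps$, which means no $c \in \R^{\cA}_\eps$ lies in $\nabla_{\cA,\eps}$; that is, (i) holds. Conversely, if $(\ast)$ fails, then Proposition \ref{Prop_SolAndKernel}(b) furnishes a coefficient vector $c \in \R^{\cA}_\eps$ whose associated $f_c$ has a singular zero, so $c \in \nabla_{\cA,\eps}$ and (i) fails. This closes the loop (iii) $\Leftrightarrow$ $(\ast)$ $\Leftrightarrow$ (i) $\Leftrightarrow$ (ii).

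There is no genuine obstacle here: the substance of the argument has already been absorbed into Propositions \ref{Prop_SolAndKernel} and \ref{Prop_SepHypAndKernel}, and the only point that deserves an explicit remark is the equivalence between the two kernel conditions in Proposition \ref{Prop_SolAndKernel}(a), which ensures that Stiemke's dichotomy in Proposition \ref{Prop_SepHypAndKernel} matches the non-existence of singular zeros on the nose.
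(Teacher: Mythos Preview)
Your proposal is correct and follows essentially the same route as the paper: both arguments obtain (i) $\Leftrightarrow$ (ii) directly from the definition $\Gamma_\eps(A,B)=B^\top\Log|\nabla_{\cA,\eps}|$, and both obtain (i) $\Leftrightarrow$ (iii) by passing through the kernel condition $\ker(\hA_\eps)\cap\R^{n+k+1}_{>0}=\emptyset$ via Propositions~\ref{Prop_SolAndKernel} and~\ref{Prop_SepHypAndKernel}. The only difference is that you spell out explicitly the equivalence of the two kernel formulations in Proposition~\ref{Prop_SolAndKernel}(a) using the involution $v\mapsto\diag(\eps)v$, which the paper leaves implicit.
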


\begin{proof}
  The equivalence between (i) and (ii) follows directly from the definition of the signed reduced $A$-discriminant, since $\Gamma_\eps(A,B) = B^\top\Log\lvert \nabla_{\cA,\eps}\rvert$. From Proposition \ref{Prop_SolAndKernel} it follows that $\nabla_{\cA,\eps} = \emptyset$ if and only if $\ker( \hA_\eps) \cap \R_{>0}^{n+k+1} = \emptyset$, which is equivalent to the existence of a non-trivial separating hyperplane of $(\cA, \eps)$ by Propositon \ref{Prop_SepHypAndKernel}. This shows that (i) and (ii) are equivalent.
\end{proof}

For fixed set of exponent vectors $\cA \subseteq \R^n$, using the correspondence between hyperplane arrangements and zonotopes, one derives a bound on the number of sign distributions for which $(\cA, \eps)$ does not have a non-trivial separating hyperplane.

\begin{prop}
\label{Prop_NumOfNonTrivSepHyp}
Let $\cA = \{ \alpha_1, \dots , \alpha_{n+k+1} \}  \subseteq \R^n$ be a finite set such that $\dim \Conv(\cA) = n$. The number of sign distributions $\eps \in \{ \pm 1 \}^{n+k+1}$ for which $(\cA,\eps)$ does not have a non-trivial separating hyperplane is bounded above by:
\[ 2 \sum_{i=0}^{k-1} \binom{n+k}{i}.\]
\end{prop}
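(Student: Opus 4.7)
Combine the equivalences already established in this section with a classical bound on the number of chambers of a central hyperplane arrangement.

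The first step is purely translational. By Theorem \ref{Thm::SepHyperplaneAdisc} together with Proposition \ref{Prop_SepHypAndKernel}, the signed support $(\cA,\eps)$ fails to have a non-trivial separating hyperplane if and only if $\ker(\hA_\eps) \cap \R_{>0}^{n+k+1} \neq \emptyset$. Writing $w = \diag(\eps) u$ for such a $u$, this is in turn equivalent to the existence of $w \in \ker(\hA)$ with $w_i \neq 0$ for all $i$ and $\sign(w) = \eps$. Hence the quantity to be bounded is precisely the number of sign patterns in $\{\pm 1\}^{n+k+1}$ that are realized by vectors in $\ker(\hA)$ having no zero coordinate.

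The next step is to recognize this count as the number of regions of a central hyperplane arrangement in $\R^k$. Since $\dim\Conv(\cA) = n$ we have $\rk(\hA) = n+1$, so $\ker(\hA)$ is a $k$-dimensional subspace. Fix a Gale dual matrix $B \in \R^{(n+k+1)\times k}$ so that $\ker(\hA) = \{B\lambda : \lambda \in \R^k\}$ and $B$ has full column rank. For $i = 1, \dots, n+k+1$ set $H_i := \{\lambda \in \R^k \mid (B\lambda)_i = 0\}$; each $H_i$ is either a linear hyperplane or all of $\R^k$. The map $\lambda \mapsto \sign(B\lambda)$ is constant on each open chamber of the central arrangement $\mathcal{H} = \{H_i\}_{i=1}^{n+k+1}$, and two distinct chambers give distinct sign patterns. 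Consequently the number of realizable sign patterns with no zero entry is at most the number of open chambers of $\mathcal{H}$; coincidences among the $H_i$, or degenerate $H_i = \R^k$, can only decrease that number.

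The final step invokes the classical bound that a central arrangement of $m$ hyperplanes in $\R^k$ has at most
\[
C(m,k) \;:=\; 2\sum_{i=0}^{k-1}\binom{m-1}{i}
\]
chambers, with equality for arrangements in general position. This is proved by a short induction on $m$: one hyperplane contributes $2$ chambers, and adding a hyperplane $H_m$ to an arrangement of $m-1$ hyperplanes creates at most one new chamber for each chamber of the induced central arrangement on $H_m \cong \R^{k-1}$, giving the recursion $C(m,k) \leq C(m-1,k) + C(m-1,k-1)$, which closes via Pascal's identity. Specializing to $m = n+k+1$ yields the bound $2\sum_{i=0}^{k-1}\binom{n+k}{i}$.

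The argument is essentially a dictionary between three viewpoints: separating hyperplanes, strictly positive elements of $\ker(\hA_\eps)$, and chambers of the central arrangement $\mathcal{H}$ in $\R^k$. No step is substantially difficult; the only subtlety is confirming that collapsing or coinciding $H_i$'s cannot increase the count, which is handled by the observation that the bound $C(m,k)$ remains an upper bound for arbitrary (not merely general-position) central arrangements.
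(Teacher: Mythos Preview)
Your proof is correct and follows essentially the same approach as the paper: both reduce the question via Proposition~\ref{Prop_SepHypAndKernel} to counting the open orthants of $\R^{n+k+1}$ meeting $\ker(\hA)=\im(B)$, and then bound this by the standard combinatorial count. The only cosmetic difference is that the paper phrases the count as the number of vertices of the zonotope $\sum_i[-B_i,B_i]\subseteq\R^k$ and cites references for the bound, whereas you work directly with the dual central hyperplane arrangement in $\R^k$ and sketch the inductive chamber-count; these are the same enumeration by the standard zonotope/arrangement duality.
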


\begin{proof}
Let $\hA$ be the matrix defined in \eqref{Eq::AMatrix}. By Proposition \ref{Prop_SepHypAndKernel}, the signed support $(\cA, \eps)$ does not have a non-trivial separating hyperplane if and only if $\ker( \hA) \cap \R_{\eps}^{n+k+1} \neq \emptyset$. So all we have to do is to count how many orthants $\R_{\eps}^{n+k+1}$ $\ker( \hA)$ intersects.

The assumption $\dim \Conv(\cA) = n$ implies that $\dim \ker(\hA) = k$. Let $B \in \R^{(n+k+1) \times k}$ be Gale dual to $\hA$ and denote by $B_1, \dots , B_{n+k+1}$  the rows of $B$.  By \cite[Lemma 0.16]{Fukuda2004LectureNO} (see also  \cite[Corollary 7.17]{Ziegler_book}), the orthants  $\R_{\eps}^{n+k+1}$ that $\im(B) = \ker( \hA)$ intersects, correspond one-to-one to the vertices of the zonotope
\[ [-B_1,B_1] + \dots + [-B_{n+k+1},B_{n+k+1}]  \subseteq \R^{k}. \]
By \cite[Table 2.1]{FukudaNotes}  (see also \cite{zaslavsky1975facing}) such a zonotope can have at most
\[ 2 \sum_{i=0}^{k-1} \binom{n+k}{i}\]
many vertices.
\end{proof}

We finish the section by providing a condition that ensures all hypersurfaces corresponding to a given signed support have the same isotopy type.

\begin{thm}
\label{Thm::SepHyperplane}
Let $(\cA,\eps)$ be a signed support. If for all faces $F \subseteq \Conv(\cA)$ the signed support $(\cA_F,\eps_F)$ has a non-trivial separating hyperplane, then for all $c \in \R^\cA_\eps$ the hypersurfaces $Z(f_c)$ have the same isotopy type.
\end{thm}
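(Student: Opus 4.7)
The plan is to chain together the two main preceding results: Theorem \ref{Thm::SepHyperplaneAdisc} (which converts the combinatorial condition on separating hyperplanes into the emptiness of signed $A$-discriminants) and Theorem \ref{Thm::RojasRusek_Adiscriminant} (which translates that emptiness into isotopy of the zero sets).

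First, I would observe that the hypothesis is a statement about every face $F$ of $\Conv(\cA)$, including $F = \Conv(\cA)$ itself. Applied one face at a time, Theorem \ref{Thm::SepHyperplaneAdisc} gives the implication (iii) $\Rightarrow$ (i), so that $\nabla_{\cA_F, \eps_F} = \emptyset$ for every face $F \subseteq \Conv(\cA)$. Next, I would unwind the definition
\[
\tilde{\nabla}_{\cA_F,\eps_F} = \big\{ (c_{\alpha_i} )_{i} \in \R^{n+k+1}_\eps \mid  (c_{\alpha_i})_{\alpha_i \in \cA_F } \in \nabla_{\cA_F,\eps_F} \big\},
\]
to conclude that $\tilde{\nabla}_{\cA_F, \eps_F}$ is empty for every face $F$ as well, since its defining condition is vacuous.

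Once every $\tilde{\nabla}_{\cA_F, \eps_F}$ is empty, so is each $B^\top \Log |\tilde{\nabla}_{\cA_F, \eps_F}|$, and hence so is their finite union. Therefore
\[
\R^k \setminus \Bigl( \bigcup_{F \subseteq \Conv(\cA)\text{ a face}} B^\top \Log |\tilde{\nabla}_{\cA_F, \eps_F}| \Bigr) \;=\; \R^k,
\]
which is trivially connected. For any $c, c' \in \R^\cA_\eps$, the points $B^\top \Log|c|$ and $B^\top \Log|c'|$ therefore lie in the same (unique) connected component, and Theorem \ref{Thm::RojasRusek_Adiscriminant} yields that $Z(f_c)$ and $Z(f_{c'})$ are ambiently isotopic in $\R^n$.

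There is essentially no obstacle here beyond correctly invoking the previous results; the argument is a direct chain. The one thing to watch is that Theorem \ref{Thm::RojasRusek_Adiscriminant} is stated under the full-dimensional hypothesis, and Theorem \ref{Thm::SepHyperplaneAdisc} is applied to each face's signed support (whose affine hull is a proper subspace of $\R^n$ when $F$ is a proper face). The latter is fine because the characterization via separating hyperplanes, through Stiemke's theorem in Proposition \ref{Prop_SepHypAndKernel}, holds for arbitrary $\hA$ without a rank assumption; and the former is applied to $(\cA, \eps)$ itself, which can be assumed full-dimensional (otherwise one restricts to the affine hull of $\cA$ without altering the isotopy type via Proposition \ref{Lemma::Transform}).
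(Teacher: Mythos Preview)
Your proof is correct and follows essentially the same approach as the paper: apply Theorem~\ref{Thm::SepHyperplaneAdisc} to each face to obtain empty signed (reduced) $A$-discriminants, then invoke Theorem~\ref{Thm::RojasRusek_Adiscriminant} on the resulting connected complement. Your version is simply more detailed, spelling out the emptiness of $\tilde{\nabla}_{\cA_F,\eps_F}$ and the connectedness of $\R^k$, and adding a careful remark about the full-dimensionality hypothesis that the paper's terse proof leaves implicit.
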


\begin{proof}
 From Theorem \ref{Thm::SepHyperplaneAdisc}  follows that the signed reduced $A$-discriminants associated to the faces  $F \subseteq \Conv(\cA)$ are empty. Thus, all the hypersurfaces $Z(f_c), \, c \in \R_\eps^\cA$ have the same isotopy type by Theorem  \ref{Thm::RojasRusek_Adiscriminant}.
\end{proof}

\begin{cor}
\label{Cor:SepHyp}
If a signed support $(\cA,\eps)$ has a very strict separating hyperplane, then the hypersurfaces $Z(f_c)$ have the same isotopy type for all $c \in \R^\cA_\eps$.
\end{cor}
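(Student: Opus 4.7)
The strategy is to derive the corollary directly from Theorem~\ref{Thm::SepHyperplane}: I will show that a very strict separating hyperplane of $(\cA,\eps)$ automatically witnesses a non-trivial separating hyperplane of $(\cA_F,\eps_F)$ for every face $F \subseteq \Conv(\cA)$, after which the theorem supplies the desired conclusion.

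To set this up, I would first unpack the ``very strict'' condition: if $\mathcal{H}_{v,a}$ is such a hyperplane, then no point of $\cA$ lies on $\mathcal{H}_{v,a}$, so the inclusions \eqref{Eq_SepVectorIn} upgrade to $\cA_+ \subseteq \inte(\mathcal{H}_{v,a}^+)$ and $\cA_- \subseteq \inte(\mathcal{H}_{v,a}^-)$. For any face $F \subseteq \Conv(\cA)$, the inclusions $\cA_{F,+} \subseteq \cA_+$ and $\cA_{F,-} \subseteq \cA_-$ are immediate, so the separating condition for $(\cA_F,\eps_F)$ is inherited. For non-triviality, I would use that any non-empty face of $\Conv(\cA)$ contains at least one vertex, and vertices of $\Conv(\cA)$ belong to $\cA$; hence $\cA_F \neq \emptyset$, and by the strengthened inclusions every element of $\cA_F$ lies in one of the open half-spaces $\inte(\mathcal{H}_{v,a}^{+})$ or $\inte(\mathcal{H}_{v,a}^{-})$. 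Thus at least one of these open half-spaces contains a point of $\cA_F$, giving the required non-triviality.

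No real obstacle is anticipated --- the argument is essentially a bookkeeping check, and a single hyperplane serves every face simultaneously. The key observation is that it is precisely the ``very strict'' hypothesis (rather than mere non-triviality for $(\cA,\eps)$) that survives restriction to faces: without very strictness one could have $\cA_F$ lying entirely on $\mathcal{H}_{v,a}$, which would collapse both open half-spaces to sets disjoint from $\cA_F$ and violate non-triviality for $(\cA_F,\eps_F)$.
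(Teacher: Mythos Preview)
Your proposal is correct and follows essentially the same route as the paper's proof: both observe that a very strict separating hyperplane of $(\cA,\eps)$ restricts to a (very strict, hence non-trivial) separating hyperplane of $(\cA_F,\eps_F)$ for every face $F$, and then invoke Theorem~\ref{Thm::SepHyperplane}. Your version simply spells out the non-triviality check more explicitly than the paper does.
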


\begin{proof}
If $\mathcal{H}_{v,a}$ is a very strict separating hyperplane of $(\cA,\eps)$, then it is also a very strict separating hyperplane of $(\cA_F,\eps_F)$ for all faces $F \subseteq \Conv(\cA)$. Now, the statement follows from Theorem \ref{Thm::SepHyperplane}.
\end{proof}

\begin{ex}
    The signed support $(\cA,\eps)$ from Example \ref{Example_Viro} has a very strict separating hyperplane. Thus, by Corollary \ref{Cor:SepHyp}, all hypersurfaces $Z(f_c), \, c \in \R^\cA_\eps$ have the same isotopy type. From Theorem \ref{Thm:Viro} follows that this isotopy type agrees with the isotopy type of the signed tropical hypersurface $\Trop_\eps(\cA,h)$ for every generic $h \in \R^\cA$. We refer to Figure \ref{FIG_Example_Viro}(b) for an illustration of $\Trop_\eps(\cA,h)$ with $h=(5,4,4,5,5)$.
\end{ex}

\begin{remark}
If a signed support $(\cA,\eps)$ has a non-trivial separating hyperplane, it might happen that for one of the faces $F \subseteq \Conv(\cA)$ the restricted signed support $(\cA_F,\eps_F)$ does not have a non-trivial separating hyperplane. For such an example, we revisit the signed support from Figure \ref{SepHyperplane}. The face $F = \Conv( (0,0,0)^\top,(2,0,0)^\top)$, contains two negative exponent vectors $\alpha_1 = (0,0,0)^\top, \alpha_2 = (2,0,0)^\top$ and one positive exponent vector $\alpha_3 = (1,0,0)^\top$. Since $\alpha_3$ lies in the relative interior of $\Conv(\alpha_1,\alpha_2)$, it follows that $\{\alpha_1,\alpha_2\}$ and $\{\alpha_3\}$ cannot be separated by an affine hyperplane.
\end{remark}

\section{A-discriminants with two signed reduced outer chambers}
\label{Section_NoInnerChamber}

The goal of this section is to describe conditions on the signed support $(\cA,\eps)$ that ensure $\R^k \setminus \Gamma_\eps(A,B)$ has at most two connected components, and each is unbounded. In Section~\ref{Section::Cusps}, we focus on the case where $\cA$ has exactly $n+3$ exponent vectors. Under this assumption, the signed reduced $A$-discriminant $\Gamma_\varepsilon(A,B)$ is a curve in the plane, and we apply classical results (cf. Section~\ref{Sec:ResFromTop}) to analyze its topology. We show that the complement of $\Gamma_\eps(A,B)$ has at most two chambers if the parametrization map $\overline{\xi}_{B,\eps}$ has at most one critical point (Proposition \ref{Prop_NoCrit_NoInnerChamber}). It is known that $\overline{\xi}_{B,\eps}$ can have at most $n$ critical points \cite{Rusek::Thesis}, however there did not exist any known example in the literature where this bound is attained. In Example~\ref{Eq::4cusps}, we describe a family of signed supports such that $\overline{\xi}_{B,\eps}$  has $n$ critical points for every $n \in \mathbb{N}$.

In Section \ref{Section::DegenSingularPoints}, we investigate the relation between critical points of $\overline{\xi}_{B,\eps}$ and degenerate singular points of  $Z(f_c)$ , and show that if $Z(f_c)$ has no degenerate singular point for all $c \in \R^\cA_\eps$ and the codimension of $\cA$ is $2$, then $\R^k \setminus \Gamma_\eps(A,B)$ has at most two connected components  (Theorem \ref{Thm_NoInnerChamber}). In Section \ref{Section::NoDegenerateSingPoints}, we give several conditions on the geometry of the signed support $(\cA,\eps)$ precluding the existence of degenerate singular points in $Z(f_c), \, c \in \R^\cA_\eps$.

\begin{assumption}
\label{Assumption}
During the whole section, we assume that we can choose the Gale dual matrix $B \in \R^{(n+k+1)\times k}$ such that its last row has the form $(0,\dots,0,-1)$. By Remark \ref{Remark:FixB}, the only case when we cannot make this assumption is when the last row of $B$ is zero. This is equivalent to saying that $\mathcal{A}$ is a pyramid with apex at $\alpha_{n+k+1}$ (cf. \cite[Section 2.2]{BihanBound}). In this case, the signed reduced $A$-discriminant is empty by Theorem~\ref{Thm::SepHyperplaneAdisc}. Therefore, we can assume, without loss of generality, that the last row of $B$ is  $(0,\dots,0,-1)$.
\end{assumption}

\subsection{Critical points of the signed reduced A-discriminant}
\label{Section::Cusps}

Let $\cA = \{ \alpha_1, \dots , \alpha_{n+k+1}\} \subseteq \R^n$ be a set of exponent vectors such that $\dim \Conv(\cA) = n$ and fix a sign distribution $\eps \in \{ \pm 1 \}^{n+k+1}$. Let $\hA \in \R^{(n+1) \times (n+k+1)}$ be as given in \eqref{Eq::AMatrix} and choose a Gale dual matrix $B \in \R^{(n+k+1)\times k}$ with rows $B_1, \dots, B_{n+k+1}$ such that its last row has the form $B_{n+k+1} = (0, \dots , 0 ,-1)$.

Let $\bar{\xi}_{B,\eps}$ be the parametrization map of $\Gamma_\eps(A,B)$ as defined in \eqref{Eq_AffineSignedRedParam}. Following \cite[Section 1.2]{arnold2012singularities}, we call a point $\mu \in \R^{k-1}$ a \emph{critical point} of $\bar{\xi}_{B,\eps}$ if $\bar{\xi}_{B,\eps}(\mu)$ is well-defined and the Jacobian matrix  $J_{\bar{\xi}_{B,\eps}}(\mu)$ does not have full rank, that is, it has rank strictly less than $k-1$.

\begin{lemma}
\label{Lemma_BJac}
    Let $(\cA,\eps)$ be a signed support with Gale dual matrix $B$, and let $\bar{\xi}_{B,\eps}$ be as defined in \eqref{Eq_AffineSignedRedParam}. Then for each $\mu \in \R^{k-1}$ where  $\bar{\xi}_{B,\eps}$ is defined, we have the following equality for the Jacobian matrix
\begin{align}
\label{Eq::JacobianAffineParam}
J_{\bar{\xi}_{B,\eps}}(\mu) = B^\top \diag\Big( \tfrac{1}{B \hat{\mu}} \Big)\tilde{B}  ,
\end{align}
where $\tilde{B}$ denotes the matrix obtained from  $B$ by deleting its last column and $\hat{\mu}= \begin{bmatrix}\mu \\  1 \end{bmatrix}$.
\end{lemma}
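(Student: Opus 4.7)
The plan is to prove this lemma by a direct chain-rule computation, decomposing $\bar{\xi}_{B,\eps}$ into a chain of simpler maps and differentiating each in turn. Write $\hat{\mu} = \begin{bmatrix} \mu \\ 1 \end{bmatrix}$ and factor
\[
\bar{\xi}_{B,\eps}\colon \mu \;\xrightarrow{\;\iota\;}\; \hat{\mu} \;\xrightarrow{\;B\;}\; B\hat{\mu} \;\xrightarrow{\;\Log|\cdot|\;}\; \Log|B\hat{\mu}| \;\xrightarrow{\;B^\top\;}\; B^\top \Log|B\hat{\mu}|,
\]
where $\iota\colon \R^{k-1} \to \R^k$ is the inclusion $\mu \mapsto \hat{\mu}$. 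The Jacobian of $\iota$ is the $k\times(k-1)$ matrix $\begin{bmatrix} I_{k-1} \\ 0 \end{bmatrix}$, so $B \cdot \begin{bmatrix} I_{k-1} \\ 0 \end{bmatrix}$ is exactly the matrix consisting of the first $k-1$ columns of $B$, that is, $\tilde{B}$. Hence the composition $\mu \mapsto B\hat{\mu}$ has Jacobian $\tilde{B}$ at every $\mu$.

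Next I would compute the Jacobian of the coordinate-wise map $\Log|\cdot|\colon v \mapsto (\log|v_1|, \dots, \log|v_{n+k+1}|)$. Since $\bar{\xi}_{B,\eps}$ is defined precisely on the set where $\sign(B\hat{\mu}) = \eps$, every coordinate of $B\hat{\mu}$ is nonzero, so $\Log|\cdot|$ is smooth there; using $\tfrac{d}{dx}\log|x| = 1/x$ for $x \neq 0$, its Jacobian at $v = B\hat{\mu}$ is the diagonal matrix $\diag\bigl(1/(B\hat{\mu})\bigr)$. Finally, the linear map $w \mapsto B^\top w$ has Jacobian $B^\top$.

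Applying the chain rule by composing these three Jacobians in the correct order yields
\[
J_{\bar{\xi}_{B,\eps}}(\mu) \;=\; B^\top \cdot \diag\!\Big(\tfrac{1}{B\hat{\mu}}\Big) \cdot \tilde{B},
\]
which is the desired formula. There is no real obstacle here: the content is purely the chain rule plus the observation that differentiating $\iota$ amounts to discarding the last column of $B$. The only point that warrants mention is the domain condition, namely that $B\hat{\mu}$ has no zero entries on the domain of $\bar{\xi}_{B,\eps}$, which ensures that the coordinate-wise $\log|\cdot|$ is differentiable and that the diagonal matrix $\diag(1/(B\hat{\mu}))$ is well-defined.
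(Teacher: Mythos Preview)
Your proof is correct and is essentially the same computation as the paper's, just organized via the chain rule on the factored composition rather than computing each partial derivative $\partial(\bar{\xi}_{B,\eps})_j/\partial\mu_\ell$ entry by entry. The key observation in both is that differentiating $\log|B_i\cdot\hat{\mu}|$ with respect to $\mu_\ell$ gives $B_{i,\ell}/(B_i\cdot\hat{\mu})$, and your identification $B\begin{bmatrix} I_{k-1}\\ 0\end{bmatrix}=\tilde{B}$ is exactly what the paper obtains by noting that only the first $k-1$ columns of $B$ appear when differentiating in $\mu_1,\dots,\mu_{k-1}$.
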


\begin{proof}
    By definition (cf. \eqref{Eq_AffineSignedRedParam}), the $j-$th coordinate of $\bar{\xi}_{B,\eps}$ is given by
\[ \big( \bar{\xi}_{B,\eps}(\mu) \big)_j  = \sum_{i =1}^{n+k+1} \log \lvert B_i \cdot \hat{\mu} \rvert B_{i,j},\]
where $\hat{\mu} = \begin{bmatrix}\mu \\  1 \end{bmatrix}$. Thus, the partial derivatives of $\bar{\xi}_{B,\eps}$ have the form
\begin{align}
\label{Eq:LemmaBJac}
     \frac{\partial\big( \bar{\xi}_{B,\eps}(\mu) \big)_j  }{\partial \mu_{\ell}} = \sum_{i=1}^{n+k+1} \frac{B_{i,j}  B_{i,\ell}}{B_i \cdot \hat{\mu}}
\end{align}
for all $j = 1, \dots , k$,  and $\ell = 1, \dots , k-1$. Comparing \eqref{Eq:LemmaBJac} with the entries of the right-hand side of \eqref{Eq::JacobianAffineParam} the result follows.
\end{proof}

Using Lemma \ref{Lemma_BJac}, an easy computation shows that 
\begin{align}
\label{Eq::KapranovNormal}
 \hat{\mu}^\top J_{\bar{\xi}_{B,\eps}}(\mu) = \Big( B \hat{\mu} \Big)^\top \diag\Big( \tfrac{1}{B \hat{\mu}} \Big) \tilde{B} = \mathds{1}^\top \tilde{B} = 0. 
 \end{align}
Therefore, if $\bar{\xi}_{B,\eps}$ is differentiable at $\mu$, then a normal vector at $\bar{\xi}_{B,\eps}(\mu)$ is given by $\hat{\mu}$. This statement was proven by Kapranov \cite[Theorem 2.1]{Kapranov1991ACO}.

In the remaining of the section, we focus on the case $k=2$. Under this assumption, we have 
\[ J_{\bar{\xi}_{B,\eps}}(\mu) = \begin{bmatrix} b_1^\top\diag\Big( \tfrac{1}{B \hat{\mu}} \Big) b_1 \\ b_2^\top\diag\Big( \tfrac{1}{B \hat{\mu}} \Big) b_1 \end{bmatrix} ,\]
where $b_1, b_2$ denote the first and second column of the Gale dual matrix $B$. 

 \begin{lemma}
 \label{Lemma:NumCritPoints}
  Let $(\cA,\eps)$ be a signed support of codimension $2$ with Gale dual $B \in \R^{(n+3) \times 2}$, and let $\bar{\xi}_{B,\eps}$ be as defined in \eqref{Eq_AffineSignedRedParam}. For $\mu \in \R \setminus \{0\}$ the following are equivalent.
  \begin{itemize}
      \item[(i)] $\mu$ is a critical point of $\bar{\xi}_{B,\eps}$.
      \item[(ii)] $\sign( B\hat{\mu}) = \eps$ and $\mu$ is a zero of the univariate polynomial
\begin{align}
\label{Eq::CritPoly}
 q_B(\mu) := \Big(\prod_{i = 1}^{n+3}  (B\hat{\mu})_i\Big) b_1^\top  \diag\Big( \tfrac{1}{B \hat{\mu}} \Big) b_1.
 \end{align}
      \item[(iii)] $\sign( B\hat{\mu}) = \eps$ and $\mu$ is a zero of the univariate polynomial
\begin{align}
\label{Eq::CritPoly2}
 \tilde{q}_B(\mu) := \Big(\prod_{i = 1}^{n+3}  (B\hat{\mu})_i\Big) b_2^\top  \diag\Big( \tfrac{1}{B \hat{\mu}} \Big) b_1.
 \end{align}
  \end{itemize}
 Moreover, $q_B$ has degree at most $n$ and $\bar{\xi}_{B,\eps}$ has at most $n$ critical points.
 \end{lemma}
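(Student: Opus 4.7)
The plan is to read off everything from the Jacobian formula given in Lemma \ref{Lemma_BJac}. In codimension $k=2$, the matrix $\tilde B$ is the first column $b_1$ of $B$, so $J_{\bar\xi_{B,\eps}}(\mu)$ is exactly the $2\times 1$ column vector with entries $b_1^\top \diag(1/(B\hat\mu))\, b_1$ and $b_2^\top \diag(1/(B\hat\mu))\, b_1$. Since a $2\times 1$ matrix fails to have full rank exactly when it is the zero vector, condition (i) amounts to demanding that both entries vanish. The sign condition $\sign(B\hat\mu)=\eps$ is precisely the requirement that $\mu$ lie in the domain of $\bar\xi_{B,\eps}$, so it appears in every one of the three conditions.

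The first step would be to clear denominators by multiplying each entry of $J_{\bar\xi_{B,\eps}}(\mu)$ by $P(\mu) := \prod_{i=1}^{n+3}(B\hat\mu)_i$, producing exactly $q_B(\mu)$ and $\tilde q_B(\mu)$. Kapranov's identity \eqref{Eq::KapranovNormal}, applied with $\hat\mu^\top = (\mu,1)$, reads $\mu\cdot b_1^\top \diag(1/(B\hat\mu))b_1 + b_2^\top\diag(1/(B\hat\mu))b_1 = 0$ on the open domain of $\bar\xi_{B,\eps}$. Multiplying by $P(\mu)$ yields the polynomial identity $\mu\,q_B(\mu) + \tilde q_B(\mu) = 0$, which, holding on an open set, holds everywhere. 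For $\mu\neq 0$ this immediately gives $q_B(\mu)=0 \Longleftrightarrow \tilde q_B(\mu)=0$, establishing (ii)$\Longleftrightarrow$(iii). The equivalence (i)$\Longleftrightarrow$(ii) is then automatic: since the sign condition makes $P(\mu)\neq 0$, vanishing of the Jacobian entries is equivalent to vanishing of $q_B(\mu)$ and $\tilde q_B(\mu)$, and by the previous step either one of the latter forces the other when $\mu\neq 0$.

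The main (though still short) calculation is the degree bound. Expanding gives
\[
 q_B(\mu) \;=\; \sum_{i=1}^{n+3} B_{i,1}^{\,2} \prod_{j\neq i} (B\hat\mu)_j.
\]
The normalization $B_{n+3}=(0,-1)$ forces two simplifications: the $i=n+3$ summand vanishes because $B_{n+3,1}^{\,2}=0$, and the factor $(B\hat\mu)_{n+3} = -1$ appears as a constant in every remaining summand, so
\[
 q_B(\mu) \;=\; -\sum_{i=1}^{n+2} B_{i,1}^{\,2} \prod_{\substack{j\neq i \\ 1\leq j\leq n+2}} (B\hat\mu)_j,
\]
a polynomial of apparent degree $\leq n+1$. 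I would then read off the coefficient of $\mu^{n+1}$: using the identity $B_{i,1}^{\,2}\prod_{j\neq i,\,j\leq n+2} B_{j,1} = B_{i,1}\prod_{j=1}^{n+2} B_{j,1}$ (valid whether or not some $B_{i,1}$ vanish), it equals $-\bigl(\prod_{j=1}^{n+2} B_{j,1}\bigr) \sum_{i=1}^{n+2} B_{i,1}$. This is zero because the all-ones top row of $\hat A$ and the relation $\hat A b_1=0$ force $\sum_{i=1}^{n+3} B_{i,1}=0$, while $B_{n+3,1}=0$ then gives $\sum_{i=1}^{n+2} B_{i,1}=0$. Hence $\deg q_B\leq n$, so $q_B$ has at most $n$ real roots, bounding the critical points of $\bar\xi_{B,\eps}$ by $n$.
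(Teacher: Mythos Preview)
Your argument for the equivalences (i)--(iii) is correct and matches the paper's proof almost verbatim: both identify the Jacobian entries via Lemma~\ref{Lemma_BJac}, clear denominators, and use Kapranov's identity \eqref{Eq::KapranovNormal} to see that one entry vanishes if and only if the other does (for $\mu\neq 0$).

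The only substantive difference is the degree bound. The paper simply cites \cite{Rusek::Thesis} for $\deg q_B\leq n$, whereas you supply a direct computation: after using the section-wide normalization $B_{n+3}=(0,-1)$ to drop the last summand and extract the constant factor $(B\hat\mu)_{n+3}=-1$, you read off the $\mu^{n+1}$-coefficient and kill it via $\sum_{i=1}^{n+2}B_{i,1}=0$ (which follows from $\hat A b_1=0$ together with $B_{n+3,1}=0$). Your computation is correct and makes the argument self-contained; the paper's approach keeps the proof shorter at the cost of an external reference.
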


\begin{proof}
Note that $\bar{\xi}_{B,\eps}(\mu)$ is defined only if $\sign( B\hat{\mu}) = \eps$. Furthermore, the factor $\prod_{i = 1}^{n+3}  (B\hat{\mu})_i$ clears the denominator of $b_1^\top  \diag\Big( \tfrac{1}{B \hat{\mu}} \Big) b_1$ and of $b_2^\top  \diag\Big( \tfrac{1}{B \hat{\mu}} \Big) b_1$. Moreover, $\prod_{i = 1}^{n+3}  (B\hat{\mu})_i \neq 0$ if $\sign( B\hat{\mu}) = \eps$. 
    From \eqref{Eq::KapranovNormal} follows that
    \[ \mu b_1^\top  \diag\Big( \tfrac{1}{B \hat{\mu}} \Big) b_1 = - b_2^\top  \diag\Big( \tfrac{1}{B \hat{\mu}} \Big) b_1.\]
    Thus,  $J_{\bar{\xi}_{B,\eps}}(\mu) = 0$ if and only if $q_B(\mu) = 0$, which is also equivalent to $\tilde{q}_B(\mu) = 0$.
    
The polynomial $q_B$ has been studied previously in \cite[Theorem 3.4]{Rusek::Thesis}, where it has been shown that its degree is at most $n$.
\end{proof}

In \cite[Theorem 3.10]{Rusek::Thesis}, the author constructed several matrices $B \in \R^{(n+3) \times 2}$ such that $ q_B(\mu)$ has exactly $n$ real roots.  However, these roots correspond to different sign distributions $\eps$. To show that the bound on the critical points of $\bar{\xi}_{B,\eps}$ in Lemma \ref{Lemma:NumCritPoints} is attained, one needs to construct $B \in \R^{(n+3) \times 2}$ such that $q_{B}$ (or $\tilde{q}_B$) has $n$ real roots $\mu_1, \dots, \mu_n$ such that $\sign( B\hat{\mu}_1) = \dots = \sign( B\hat{\mu}_n) = \eps$ for some fixed $\eps \in \{ \pm 1 \}^\cA$. We provide such a construction in the following example.

\begin{ex}
\label{Eq::4cusps}
Let $n\in \mathbb{N}$ and let $\mu_1,\ldots,\mu_n$ be distinct positive numbers different from $1$. Consider the univariate polynomials $f(\mu) = (\mu-\mu_1)(\mu-\mu_2)\cdots (\mu-\mu_n)$, $g(\mu) = (\mu+\mu_1)(\mu+\mu_2)\cdots(\mu+\mu_n)(\mu+1) \in \R[\mu]$. Since $\deg(f) = n < \deg(g) = n+1$, the fraction $\tfrac{f(\mu)}{g(\mu)}$ admits a partial fraction decomposition:
\begin{align}
\label{Eq::FracDecomp}
\frac{(\mu-\mu_1)(\mu-\mu_2)\cdots (\mu-\mu_n)}{(\mu+\mu_1)(\mu+\mu_2)\cdots(\mu+\mu_n)(\mu+1)}=\frac{a_1}{\mu+\mu_1}+\frac{a_2}{\mu+\mu_2}+\cdots+\frac{a_{n}}{\mu+\mu_n}+\frac{a_{n+1}}{\mu+1},
\end{align}
where $a_1, \dots , a_{n+1} \in \R$. The $a_i$'s satisfy the following properties:
\begin{itemize}
\item[(1)] ${\displaystyle \frac{a_1}{\mu_1}+\frac{a_2}{\mu_2}+\frac{a_3}{\mu_3}+\cdots+\frac{a_{n}}{\mu_n}+a_{n+1}=(-1)^n}$,
\item[(2)] $a_1+a_2+\cdots+a_{n+1}=1$.
\item[(3)] $a_i \neq 0$, $i = 1, \dots, n+1$.
\end{itemize}
Property (1) follows by plugging in $\mu=0$, (2) follows by comparing the leading coefficients of numerators on both sides, (3) follows by comparing the degree of denominators on both sides.

We use the $a_i$'s to build the matrix:
\[
B=\begin{bmatrix}
    \frac{a_1}{\mu_1} & \frac{a_2}{\mu_2} & \frac{a_3}{\mu_3} & \cdots & \frac{a_{n}}{\mu_n} &a_{n+1} & (-1)^{n+1} & 0 \\
    a_1 & a_2 & a_3 & \cdots & a_n & a_{n+1} & 0 & -1
  \end{bmatrix}^\top.
\]
Properties (1) and (2) imply that  $\mathds{1}^\top B = 0$, thus it is possible to choose a matrix $\hA \in \R^{(n+1)\times (n+3)}$ as in \eqref{Eq::AMatrix} such that $B$ is its Gale dual. Denoting by $b_1,b_2$ the columns of $B$, we have:
\[
b_2^\top\diag\Big( \tfrac{1}{B \hat{\mu}} \Big) b_1  = \frac{\frac{a_1^2}{\mu_1}}{\frac{a_1}{\mu_1}\mu+a_1}+\frac{\frac{a_2^2}{\mu_2}}{\frac{a_2}{\mu_2}\mu+a_2}+\frac{\frac{a_3^2}{\mu_3}}{\frac{a_3}{\mu_3}\mu+a_3}+\cdots+\frac{\frac{a_{n}^2}{\mu_n}}{\frac{a_{n}}{\mu_n}\mu+a_{n}}+\frac{a_{n+1}^2}{a_{n+1}\mu+a_{n+1}}.
\]
The right-hand side of this equality agrees with the right-hand side of \eqref{Eq::FracDecomp}, as the $a_i$'s are nonzero. Therefore, the zeros of $\tilde{q}_B(\mu) = b_2^\top\diag\Big( \tfrac{1}{B \hat{\mu}} \Big) b_1 $ are $\mu_1,\dots,\mu_n$. Since $\mu_1, \dots ,\mu_n$ are positive, it follows for all $i = 1, \dots , n$ that
\[ \sign (B \hat{\mu}_i ) = (\operatorname{sign}(a_1),\operatorname{sign}(a_2),\ldots,\operatorname{sign}(a_{n+1}),(-1)^{n+1},-1) =: \eps.\]
We conclude that $\bar{\xi}_{B,\eps}$ has $n$ critical points using Lemma \ref{Lemma:NumCritPoints}.
\medskip

\noindent
\begin{minipage}{0.6\textwidth}
Let $n=4$ and pick $\mu_1=5,\mu_2=6,\mu_3=7,\mu_4=8$. We have
\begin{align*}
   &\frac{(\mu-5)(\mu-6)(\mu-7)(\mu-8)}{(\mu+5)(\mu+6)(\mu+7)(\mu+8)(\mu+1)} \\
   =& \frac{-715}{\mu+5}+\frac{\frac{12012}{5}}{\mu+6}+\frac{-2730}{\mu+7}+\frac{1040}{\mu+8}+\frac{\frac{18}{5}}{\mu+1}
\end{align*}
and
\[
B=\begin{bmatrix}
    -143& \frac{2002}{5} & -390 & 130 & \frac{18}{5} & -1 & 0 \\
   -715 & \frac{12012}{5} & -2730 & 1040 & \frac{18}{5} & 0 & -1
  \end{bmatrix}^\top.
\]

By the above, the map $\bar{\xi}_{B,\eps}$ has $4$ critical points for $\eps =  (-1, 1, -1,1 ,1 ,-1 ,-1)$.  The signed reduced $A$-discriminant is drawn to the right and its critical points are highlighted by red circles.
\end{minipage}
\begin{minipage}{0.4\textwidth}
  \includegraphics[width=0.95\textwidth]{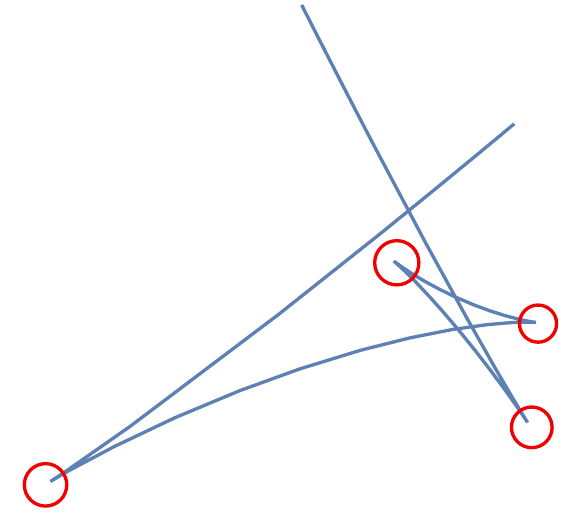}
\end{minipage}
\end{ex}

In Example \ref{Ex:GameChanger} (cf. Figure \ref{FIG_Example_2_1}(b)), we saw that the complement of $\Gamma_\eps(A,B)$ has three connected components if $\bar{\xi}_{B,\eps}$ has $2$ critical points. In the following, we show that if  $\bar{\xi}_{B,\eps}$ has at most one critical point, then $\R^k \setminus \Gamma_\eps(A,B)$ cannot have more than two connected components.

This is one of the few instances where working with the reduced version of the signed $A$-discriminant is crucial. If $\mathcal{A}$ has $n+3$ points, then the signed reduced $A$-discriminant is a curve, allowing us to apply the following result to study its self-intersections.

\begin{lemma}
\label{lem: self intersection}
  Let $\varphi\colon \R \to \R^{2}$ be a smooth map such that the Jacobian matrix $J_{\varphi}(\mu)$ has full rank for all $\mu \in \R$ except for at most one point. Let $S\subseteq \R^{2}$ be the curve parametrized by $\varphi$. If there exist two distinct points $a,b\in \R$ such that $\varphi(a)=\varphi(b)$, then there exist two distinct points $\mu_1,\mu_2\in \R$ such that the tangent lines of $S$ at $\varphi(\mu_1)$ and at $\varphi(\mu_2)$ are parallel.
\end{lemma}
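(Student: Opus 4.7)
The plan is to apply Cauchy's Mean Value Theorem (Lemma~\ref{lem:CMVT}) twice, on the two subintervals $[a, c]$ and $[c, b]$ obtained by splitting $[a, b]$ at a carefully chosen intermediate point $c$. Writing $\varphi = (\varphi_1, \varphi_2)$, I would first apply Lemma~\ref{lem:CMVT} to $\varphi_1$ and $\varphi_2$ on $[a, c]$ to obtain some $\xi_1 \in (a, c)$ at which $\varphi'(\xi_1)$ is parallel to the chord vector $\varphi(c) - \varphi(a)$. Repeating the argument on $[c, b]$ and using $\varphi(b) = \varphi(a)$ to rewrite $\varphi(b) - \varphi(c) = -(\varphi(c) - \varphi(a))$ will similarly produce $\xi_2 \in (c, b)$ whose tangent vector $\varphi'(\xi_2)$ is parallel to the very same chord $\varphi(c) - \varphi(a)$. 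Consequently $\varphi'(\xi_1) \parallel \varphi'(\xi_2)$, and the tangent lines of $S$ at the distinct points $\varphi(\xi_1)$ and $\varphi(\xi_2)$ are parallel, provided the chord $\varphi(c) - \varphi(a)$ is nonzero and both tangent vectors $\varphi'(\xi_i)$ are nonzero.

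The chord $\varphi(c) - \varphi(a)$ will be nonzero as soon as $c$ is chosen with $\varphi(c) \neq \varphi(a)$; such a $c$ always exists in $(a, b)$, for otherwise $\varphi$ would be constant on a subinterval and so produce infinitely many critical points, contradicting the hypothesis of at most one. The main obstacle is therefore to ensure that both $\xi_1$ and $\xi_2$ avoid the (possibly existing) critical point $\mu_0$ of $\varphi$, so that $\varphi'(\xi_i) \neq 0$. I plan to resolve this by a case analysis on the location of $\mu_0$. If $\mu_0 \notin (a, b)$, every $\xi \in (a, b)$ is automatically regular and any admissible $c$ works. If $\mu_0 \in (a, b)$ with $\varphi(\mu_0) \neq \varphi(a)$, the choice $c := \mu_0$ forces $\xi_1 \in (a, \mu_0)$ and $\xi_2 \in (\mu_0, b)$, each distinct from $\mu_0$.

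The remaining case is $\mu_0 \in (a, b)$ with $\varphi(\mu_0) = \varphi(a) = \varphi(b)$, a threefold coincidence of values. In that situation I will apply the argument instead to the shorter pair $(a, \mu_0)$: its endpoints still satisfy $\varphi(a) = \varphi(\mu_0)$, and because $\mu_0$ is now an endpoint, the open interval $(a, \mu_0)$ contains no critical point, so the first case applies and yields two distinct regular points in $(a, \mu_0)$ whose tangent lines of $S$ are parallel.
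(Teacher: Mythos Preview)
Your proposal is correct and follows essentially the same route as the paper's proof: both split $[a,b]$ at a point $c$ with $\varphi(c)\neq\varphi(a)$, apply Cauchy's Mean Value Theorem on each subinterval, and handle the possible critical point via the same three-way case analysis (critical point outside $(a,b)$; inside with $\varphi(\mu_0)\neq\varphi(a)$, take $c=\mu_0$; inside with $\varphi(\mu_0)=\varphi(a)$, replace $(a,b)$ by $(a,\mu_0)$). One cosmetic point: the lemma only requires the \emph{parameters} $\xi_1,\xi_2$ to be distinct, not the image points $\varphi(\xi_1),\varphi(\xi_2)$, so your clause ``at the distinct points $\varphi(\xi_1)$ and $\varphi(\xi_2)$'' is slightly stronger than needed but harmless.
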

\begin{proof}
Denote $\varphi_1,\varphi_2$ the first and the second coordinate of $\varphi$. Suppose $a<b$, and assume there exists $t \in \R$ such that $\varphi'_1(t)=\varphi'_2(t)=0$, that is, $J_\varphi(t)$ does not have full rank. We start by choosing $c \in \R$ such that $a<c<b$, $\varphi(c)\neq \varphi(a)$ and $\varphi$ is smooth on both intervals $(a,c)$ and $(c,b)$. If $t\leq a$ or $t\geq b$, such $c$ exists since the Jacobian matrix $J_{\varphi}$ has full rank on $(a,b)$. If $a<t<b$ and $\varphi(t)=\varphi(a)$, then the curve is smooth on the interval $(a,t)$, and we pick a $c$ as before. Finally, if $a<t<b$ and $\varphi(t)\neq\varphi(a)$, then we choose  $c=t$. If $\varphi$ does not have any singular point, then we pick $c$ as in the case $t\leq a$ or $t\geq b$.

By Lemma \ref{lem:CMVT}, on the interval $(a,c)$, there exists some $\mu_1 \in (a,c)$ such that
 \[
(\varphi_1(c)-\varphi_1(a))\varphi'_2(\mu_1)=(\varphi_2(c)-\varphi_2(a))\varphi'_1(\mu_1).
\]
Similarly, on the interval $(c,b)$, there exists some $\mu_2 \in (c,b)$ such that
 \[
(\varphi_1(c)-\varphi_1(b))\varphi'_2(\mu_2)=(\varphi_2(c)-\varphi_2(b))\varphi'_1(\mu_2).
\]
Thus, since $\varphi(a)= \varphi(b)$ and $\varphi(c)\neq \varphi(a)$, we have
\[
\varphi'_1(\mu_1)\varphi'_2(\mu_2)=\varphi'_1(\mu_2)\varphi'_2(\mu_1)
\]
and hence the tangent lines at $\mu_1, \mu_2$ are parallel.
\end{proof}

\noindent
\begin{minipage}{0.5\textwidth}
\noindent
  \begin{remark}
  Lemma \ref{lem: self intersection} is not true for hypersurfaces in $\R^n$ when $n\geq 3$. The surface given by $\varphi \colon (t,s)\mapsto (e^{-s}(t^2-1),e^{-s}t(t^2-1),s)$ is a counterexample for $n=3$. The map $\varphi$ is not 
injective but there are no pairs of points with parallel tangent planes. The image of $\varphi$ is shown on the right.
  \end{remark}
\end{minipage}
\begin{minipage}{0.5\textwidth}
\begin{center}
  \includegraphics[width=0.7\textwidth]{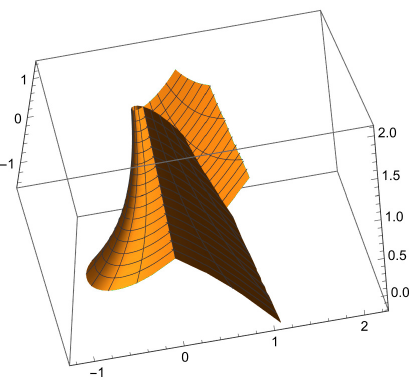}
\end{center}
\end{minipage}

Now we are able to prove the following result bounding the number of connected components of $\R^k \setminus \Gamma_\eps(A,B)$.

\begin{prop}
\label{Prop_NoCrit_NoInnerChamber}
Let $(\cA ,\eps)$ be a full-dimensional signed support of codimension $2$ with Gale dual matrix $B \in \R^{(n+3)\times 2}$. If $\bar{\xi}_{B,\eps}$ has at most one critical point, then the complement of the signed reduced A-discriminant $\Gamma_\eps(A,B)$ has at most two connected components, and each is unbounded.
\end{prop}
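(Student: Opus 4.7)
The strategy is to show that the parametrization $\bar{\xi}_{B,\eps}$ is a continuous, injective, and proper map from an open interval (identified with $\R$) to $\R^2$, and then invoke Corollary \ref{cor:unbounded components} to conclude that the complement of $\Gamma_\eps(A,B)=\bar{\xi}_{B,\eps}(D)$ consists of exactly two unbounded components. As a preliminary, I would verify that the domain $D = \{\mu \in \R \mid \sign(B\hat{\mu}) = \eps\}$ is either empty or a single open interval: the set $\mathcal{C}_{B,\eps}$ is an open convex cone cut out by the half-space inequalities $\eps_i(B_i \cdot \lambda)>0$, so its intersection with the open upper semicircle $C_1$ from Remark \ref{Remark:FixB} is a (possibly empty) open arc, and under the diffeomorphism $\mu \mapsto \hat{\mu}/\|\hat{\mu}\|$ between $\R$ and $C_1$ this arc pulls back to a single open interval. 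If $D = \emptyset$ then $\Gamma_\eps(A,B) = \emptyset$ by Theorem \ref{Thm::SepHyperplaneAdisc}, and $\R^2$ itself is the single unbounded component, so I may assume $D$ is nonempty.

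Next, I would prove injectivity of $\bar{\xi}_{B,\eps}$ by combining Lemma \ref{lem: self intersection} with the Kapranov identity \eqref{Eq::KapranovNormal}. Identify $D$ with $\R$ via a smooth diffeomorphism, which preserves both smoothness and the count of critical points. Suppose for contradiction that $\bar{\xi}_{B,\eps}(\mu_1) = \bar{\xi}_{B,\eps}(\mu_2)$ for some $\mu_1 \neq \mu_2$. Since by hypothesis $\bar{\xi}_{B,\eps}$ has at most one critical point, Lemma \ref{lem: self intersection} applies and yields two distinct parameters $\nu_1, \nu_2$ at which the tangent lines to the image curve are parallel. But \eqref{Eq::KapranovNormal} says that $\hat{\nu} = (\nu,1)^\top$ is a normal vector to the tangent line at $\bar{\xi}_{B,\eps}(\nu)$; parallel tangent lines then force the normals $\hat{\nu}_1$ and $\hat{\nu}_2$ to be parallel, and since each has second coordinate equal to $1$, this forces $\nu_1 = \nu_2$, a contradiction.

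It then remains to verify properness and assemble the conclusion. At a finite boundary point $\mu_* \in \partial D$, at least one row $B_{i_0}$ of $B$ satisfies $B_{i_0} \cdot \hat{\mu}_* = 0$; moreover $B_{i_0}\neq 0$ (otherwise $\mathcal{C}_{B,\eps}$ would be empty), so as $\mu \to \mu_*$ the summand $B_{i_0} \log|B_{i_0} \cdot \hat{\mu}|$ dominates and drives $\bar{\xi}_{B,\eps}(\mu)$ to infinity while the other summands stay bounded. If an endpoint of $D$ lies at $\pm\infty$, then for large $|\mu|$ one has $\log|B_i \cdot \hat{\mu}| = \log|\mu| + O(1)$ whenever $B_{i,1}\neq 0$, so that $\bar{\xi}_{B,\eps}(\mu) = \log|\mu|\sum_{i:\, B_{i,1}\neq 0} B_i + O(1)$; combining $\sum_i B_i = 0$ with the normalization $B_{n+3} = (0,-1)$ shows that the second coordinate of $\bar{\xi}_{B,\eps}(\mu)$ tends to $+\infty$. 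With $\bar{\xi}_{B,\eps}\colon D\cong \R \to \R^2$ continuous, injective, and proper, Corollary \ref{cor:unbounded components} gives exactly two unbounded connected components of $\R^2 \setminus \Gamma_\eps(A,B)$.

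The main obstacle I anticipate is the properness analysis at $\mu\to\pm\infty$ in non-generic configurations where additional rows of $B$ have vanishing first coordinate and could conspire to cancel the leading $\log|\mu|$ coefficient; this case can be resolved either by a careful accounting of lower-order terms or by choosing a Gale dual basis (other than the one fixed by $B_{n+3}=(0,-1)$) for which the asymptotics are genuinely unbounded, after which the injectivity argument carries over verbatim.
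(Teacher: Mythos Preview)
Your proposal is correct and follows essentially the same route as the paper: both arguments establish injectivity of $\bar{\xi}_{B,\eps}$ by combining Lemma~\ref{lem: self intersection} with the Kapranov normal identity~\eqref{Eq::KapranovNormal}, then invoke properness and Corollary~\ref{cor:unbounded components}. You are in fact more explicit than the paper on two points the authors leave implicit---that the domain is a single open interval, and why $\bar{\xi}_{B,\eps}(\mu)\to\infty$ at the endpoints---and the residual concern you flag about cancellation at $\mu\to\pm\infty$ is precisely the kind of detail the paper's one-line properness claim also glosses over.
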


\begin{proof}
Recall that for $\mu \in \R$, we used the notation $\hat{\mu} = \begin{bmatrix}
    \mu \\ 1
\end{bmatrix}$. If $(\cA ,\eps)$  has a non-trivial separating hyperplane, then $\Gamma_\eps(A,B) = \emptyset$ by Theorem \ref{Thm::SepHyperplaneAdisc}. Thus, $\R^k \setminus \Gamma_\eps(A,B)$ has one connected component. If $(\cA ,\eps)$  does not have a non-trivial separating hyperplane, then from Proposition~\ref{Prop_SepHypAndKernel} follows that there exist $\mu_1 ,\mu_2 \in \R$ such that $ \sign( B \hat{\mu}_1 ) = \sign( B \hat{\mu}_2)  = \eps$. By \eqref{Eq::KapranovNormal}, $\hat{\mu}_1$ and $\hat{\mu}_2$ are normal vectors at $\bar{\xi}_{B,\eps}(\mu_1)$ and at $\bar{\xi}_{B,\eps}(\mu_2)$ respectively. If the tangent lines at $\bar{\xi}_{B,\eps}(\mu_1)$ and at $\bar{\xi}_{B,\eps}(\mu_2)$ are parallel, then $\hat{\mu}_1 = \lambda \hat{\mu}_2$ for some $\lambda \in \R \setminus \{0\}$, which implies that $\mu_1 = \mu_2$. This shows that there is no pair of points in $\Gamma_\eps(A,B)$ with parallel tangent lines.

Lemma \ref{lem: self intersection} implies that $\bar{\xi}_{B,\eps}$ is injective. Also, $\bar{\xi}_{B,\eps}$ maps an open interval of $\R$ to $\R^2$, and $\bar{\xi}_{B,\eps}(\mu)\to \infty$ as $\mu$ approaches the endpoints of the interval. Therefore, $\bar{\xi}_{B,\eps}$ is proper by Lemma \ref{lem:proper}, which implies that the complement of $\Gamma_\eps(A,B)$ has exactly two unbounded connected components by Corollary \ref{cor:unbounded components}.
\end{proof}

\subsection{Critical points and degenerate singularities}
\label{Section::DegenSingularPoints}
Let $f_c$ be an exponential sum as in \eqref{Eq::ExpSum}. A singular point $x \in \Sing(f_c)$ is called \emph{degenerate} if the Hessian matrix $\Hess_{f_c}(x)$ is not invertible. We have the following relationship between critical points of the signed reduced $A$-discriminant and degenerate singular points in the corresponding hypersurface.

\begin{lemma}
\label{Lemma::CritAndDegeneratePoints}
Let $(\cA ,\eps)$ be a full-dimensional signed support with Gale dual matrix $B \in \R^{(n+k+1) \times k }$. If $\mu^* \in \R^{k-1}$ is a critical point of $\bar{\xi}_{B,\eps}$, then for $c^* = B \begin{bmatrix}\mu^* \\  1 \end{bmatrix}$, the point $x^* = (0,\dots,0) \in \R^n$ is a degenerate singular point of $f_{c^*}$.
\end{lemma}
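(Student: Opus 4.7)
The plan is first to check that $x^* = 0$ is a singular point of $f_{c^*}$, and then to produce a nonzero vector in the kernel of $\Hess_{f_{c^*}}(0)$, certifying its degeneracy. Since $\hA B = 0$ by construction of the Gale dual, the critical system \eqref{Eq_FullCriticalSystem} evaluated at $x = 0$ becomes $\hA c^* = \hA B \hat{\mu}^* = 0$, so $0 \in \Sing(f_{c^*})$. Differentiating $f_{c^*}$ twice and evaluating at $0$ gives
\[ \Hess_{f_{c^*}}(0) = \tilde{A}\, \diag(c^*)\, \tilde{A}^\top, \]
where $\tilde{A} \in \R^{n \times (n+k+1)}$ has $\alpha_i$ as its $i$-th column (the matrix $\hA$ with its top $\mathds{1}$-row removed). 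So the task reduces to exhibiting some $\tilde{u} \in \R^n \setminus \{0\}$ with $\tilde{A} \diag(c^*) \tilde{A}^\top \tilde{u} = 0$.

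By Lemma \ref{Lemma_BJac}, the Jacobian $J_{\bar{\xi}_{B,\eps}}(\mu^*) = B^\top \diag(1/c^*) \tilde{B}$ has rank strictly less than $k-1$, so there is a nonzero $v \in \R^{k-1}$ with $B^\top \diag(1/c^*) \tilde{B} v = 0$. Hence $\diag(1/c^*) \tilde{B} v \in \ker(B^\top) = \im(B)^\perp = \im(\hA^\top)$, where the last equality uses $\im(B) = \ker(\hA)$ (from $\hA B = 0$ together with $\rk(\hA) = n+1$). Writing the preimage as $u = (u_0, \tilde{u}) \in \R \times \R^n$, so that $\hA^\top u = u_0 \mathds{1} + \tilde{A}^\top \tilde{u}$, and multiplying through by $\diag(c^*)$ yields
\[ \tilde{B} v = u_0\, c^* + \diag(c^*)\, \tilde{A}^\top \tilde{u}. \]
Applying $\tilde{A}$ to both sides and using $\tilde{A} \tilde{B} = 0$ (each column of $\tilde{B}$ is a column of $B$, hence lies in $\ker(\hA) \subseteq \ker(\tilde{A})$) together with $\tilde{A} c^* = \tilde{A} B \hat{\mu}^* = 0$ gives
\[ \tilde{A}\, \diag(c^*)\, \tilde{A}^\top \tilde{u} = 0. \]

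It remains to verify $\tilde{u} \neq 0$, which I expect to be the only subtle step. Suppose, for contradiction, that $\tilde{u} = 0$. Then $\tilde{B} v = u_0 c^*$, and since $c^* = \tilde{B}\mu^* + b_k$ with $b_k$ denoting the last column of $B$, this rearranges to $\tilde{B}(v - u_0 \mu^*) = u_0 b_k$. If $u_0 \neq 0$ this would force $b_k \in \im(\tilde{B})$, contradicting $\rk(B) = k$. If $u_0 = 0$, then $\tilde{B} v = 0$, and extending $v$ by a trailing zero to an element of $\R^k$ produces a nonzero vector in $\ker(B)$, again contradicting $\rk(B) = k$. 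Therefore $\tilde{u} \neq 0$ is a nonzero element of $\ker(\Hess_{f_{c^*}}(0))$, and $0$ is a degenerate singular point of $f_{c^*}$. The whole argument is essentially linear algebra once the Gale-dual translation is set up correctly; the main thing to execute carefully is the bookkeeping passing between the kernel of $B^\top$, the image of $\hA^\top$, and the factorization of the Hessian.
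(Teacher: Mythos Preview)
Your proof is correct and takes a genuinely different, more elementary route than the paper. The paper proceeds indirectly: it invokes a theorem of Forsg{\aa}rd \cite{ForsgCusp} characterizing degenerate singular points via the rank of the Horn--Kapranov Jacobian $J_\psi$, then uses the commutative diagram \eqref{Eq_ParamDiagram}, the chain rule, and Sylvester's rank inequality to propagate the rank drop of $J_{\bar{\xi}_{B,\eps}}(\mu^*)$ back to a rank drop of $J_\psi(\hat{\mu}^*,x^*)$. By contrast, you compute the Hessian explicitly as $\tilde{A}\,\diag(c^*)\,\tilde{A}^\top$ and manufacture a kernel vector directly from a nonzero $v\in\ker J_{\bar{\xi}_{B,\eps}}(\mu^*)$, using only the Gale-dual identities $\ker(B^\top)=\im(\hA^\top)$ and $\tilde{A}B=0$. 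Your argument is self-contained (no external citation needed) and arguably more transparent, since it makes visible exactly how the kernel of the Jacobian of $\bar{\xi}_{B,\eps}$ translates into a null direction of the Hessian. The paper's approach, on the other hand, fits into a broader framework relating singularities of the discriminant to singularities of the parametrization map, which may generalize more readily to higher codimension or to finer singularity classifications.
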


\begin{proof}
Since $\diag(c^*)\Big( e^{\alpha_i \cdot x^*}\Big)_{i=1,\dots,n+k+1} = B \begin{bmatrix}\mu^* \\  1 \end{bmatrix} \in \ker(\hA)$, we have that $x^*$ is a singular point of $f_{c^*}$ (cf.\eqref{Eq_FullCriticalSystem}). Thus, we only have to show that it is a degenerate singular point. Let $\psi$ denote the Horn-Kapranov Uniformization map \eqref{Eq::HornKapranov}. From \cite[Theorem 3.4, Theorem 3.5]{ForsgCusp}, it follows that  $x^*$  is a degenerate singular point if
\begin{align}
\label{Eq_RankInewJPsi}
\rk J_{\psi}( \hat{\mu}^*,x^*) \leq n+k-1,
\end{align}
where $\hat{\mu}^* =  \begin{bmatrix}\mu^* \\  1 \end{bmatrix}$.

We prove \eqref{Eq_RankInewJPsi} in two steps. First we show that
\begin{align}
\label{Eq_RankJxi}
\rk J_{\xi_{B,\eps}}(\hat{\mu}^*) = \rk J_{\bar{\xi}_{B,\eps}}(\mu^*).
\end{align}

To see this, a similar computation as in \eqref{Eq::JacobianAffineParam} shows
\[ J_{\xi_{B,\eps}}(\hat{\mu}^*) = B^\top \diag\Big( \tfrac{1}{B \hat{\mu}^*} \Big)B .\]
Thus, the first $k-1$ columns of $J_ {\xi_{B,\eps}}(\hat{\mu}^*)$ and $  J_ {\bar{\xi}_{B,\eps}}(\mu^*)$ are the same. To show that the two matrices have the same rank, it is enough to show that the last column of $J_ {\xi_{B,\eps}}(\hat{\mu}^*)$ is contained in the linear space spanned by the columns of  $J_ {\bar{\xi}_{B,\eps}}(\mu^*)$, which holds since
\[  B^\top \diag\Big( \tfrac{1}{B\hat{\mu}^*} \Big)B \hat{\mu}^* = B^\top  \mathds{1} = 0,\]
  where the last equality holds since $ B^\top \hA^\top = 0$ and the first column of $\hA^\top$ equals $\mathds{1}$. This shows \eqref{Eq_RankJxi}.

  In the second part of the proof, we show \eqref{Eq_RankInewJPsi}. Using that the diagram \eqref{Eq_ParamDiagram} commutes and the chain rule, we have
  \[ J_{B^\top \circ \Log\lvert \cdot \rvert \circ \psi} (\hat{\mu}^*,x^*) =  J_{\xi_{B,\eps} \circ \pr_k} (\hat{\mu}^*,x^*) =  J_{\xi_{B,\eps}} (\mu^*) J_{\pr_k} (\hat{\mu}^*,x^*).\]
 Using \eqref{Eq_RankJxi} and that $\rk J_{\pr_k} (\hat{\mu}^*,x^*) = k$, it follows that
  \[ \rk J_{B^\top \circ \Log\lvert \cdot \rvert \circ \psi} (\hat{\mu}^*,x^*) = \rk J_{\xi_{B,\eps}} (\hat{\mu}^*) = \rk J_{\bar{\xi}_{B,\eps}}(\mu^*)  \leq k-2,\]
  where the last inequality holds since $\mu^*$ is a critical point of $\bar{\xi}_{B,\eps}$.

  Using again the chain rule
   \[ J_{B^\top \circ \Log\lvert \cdot \rvert \circ \psi} (\hat{\mu}^*,x^*) = B^\top J_{ \Log\lvert \cdot \rvert} (\psi(\hat{\mu}^*,x^*))  J_{\psi} (\hat{\mu}^*,x^*).\]
   Note that $B^\top$ has rank $k$ and $J_{ \Log\lvert \cdot \rvert} (\psi(\hat{\mu}^*,x^*)) $ is a diagonal matrix with nonzero diagonal entries. Thus, $\rk B^\top J_{ \Log\lvert \cdot \rvert} (\psi(\hat{\mu}^*,x^*))  = k$. From Sylvester’s rank inequality follows that
   \[ \rk B^\top J_{ \Log\lvert \cdot \rvert} (\psi(\hat{\mu}^*,x^*))  + \rk  J_{\psi} (\hat{\mu}^*,x^*) - (n+k+1) \leq \rk J_{B^\top \circ \Log\lvert \cdot \rvert \circ \psi} (\hat{\mu}^*,x^*) \leq k-2,\]
  which imples \eqref{Eq_RankInewJPsi}.
\end{proof}

\begin{prop}
\label{Prop_NonDegSing_NoCrit}
Let $(\cA,\eps)$ be a full-dimensional signed support with Gale dual matrix $B$.
If for all $c \in \R_\eps^{\cA}$, all singular points of $Z(f_c)$ are non-degenerate, then $\bar{\xi}_{B,\eps}$ does not have any critical point.
\end{prop}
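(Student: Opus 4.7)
The proof is essentially the contrapositive of Lemma \ref{Lemma::CritAndDegeneratePoints}, and is therefore quite short. My plan is to assume, for the sake of contradiction, that $\bar{\xi}_{B,\eps}$ admits a critical point $\mu^* \in \R^{k-1}$ and then manufacture a coefficient vector in $\R^\cA_\eps$ whose associated exponential sum has a degenerate singular point, contradicting the hypothesis.

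First I would unfold the definition of a critical point as given just before Lemma \ref{Lemma_BJac}: the existence of a critical point $\mu^*$ of $\bar{\xi}_{B,\eps}$ requires $\bar{\xi}_{B,\eps}(\mu^*)$ to be well-defined in the first place, which by the definition of the domain of $\bar{\xi}_{B,\eps}$ in \eqref{Eq_AffineSignedRedParam} means $\sign(B\hat{\mu}^*) = \eps$, where $\hat{\mu}^* = \begin{bmatrix} \mu^* \\ 1 \end{bmatrix}$. Setting $c^* := B\hat{\mu}^* \in \R^{n+k+1}$ therefore gives a vector of coefficients lying in $\R^\cA_\eps$, so $c^*$ is a legitimate candidate to test the hypothesis against.

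Next I would invoke Lemma \ref{Lemma::CritAndDegeneratePoints} directly: because $\mu^*$ is a critical point of $\bar{\xi}_{B,\eps}$ and $c^* = B\hat{\mu}^*$, the lemma asserts that $x^* = (0,\dots,0) \in \R^n$ is a degenerate singular point of $f_{c^*}$. Since $c^* \in \R^\cA_\eps$, this contradicts the assumption that every singular point of $Z(f_c)$ is non-degenerate for every $c \in \R^\cA_\eps$. Hence $\bar{\xi}_{B,\eps}$ has no critical point.

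I do not anticipate any obstacle: all the technical work (the rank computations using Sylvester's inequality and the chain-rule manipulation of $J_\psi$) has already been absorbed into Lemma \ref{Lemma::CritAndDegeneratePoints}. The only thing to be careful about is confirming that the coefficient vector produced from the critical point really does land in $\R^\cA_\eps$, which is immediate from the domain of $\bar{\xi}_{B,\eps}$ fixed in Remark \ref{Remark:FixB}.
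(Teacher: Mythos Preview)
Your proposal is correct and matches the paper's approach exactly: the paper's proof is the single sentence ``The statement is a direct consequence of Lemma~\ref{Lemma::CritAndDegeneratePoints},'' and you have simply unpacked that contrapositive with the appropriate care about $c^* = B\hat{\mu}^* \in \R^\cA_\eps$.
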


\begin{proof}
The statement is a direct consequence of Lemma \ref{Lemma::CritAndDegeneratePoints}.
\end{proof}

\begin{thm}
\label{Thm_NoInnerChamber} 
Let $(\cA,\eps)$ be a full-dimensional signed support of codimension $2$ with Gale dual matrix $B$. If for all $c \in \R_\eps^{\cA}$, all singular points of $Z(f_c)$ are non-degenerate, then the complement of the signed reduced $A$-discriminant $\Gamma_\eps(A,B)$ has at most two connected components, and each is unbounded.
\end{thm}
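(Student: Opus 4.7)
The statement is essentially a corollary that glues together two results already established in the section, so my plan is short and fairly mechanical.

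First, I would invoke Proposition \ref{Prop_NonDegSing_NoCrit}: the hypothesis that every singular point of $Z(f_c)$ is non-degenerate for every $c\in\R^{\cA}_\eps$ immediately gives that the parametrization $\bar{\xi}_{B,\eps}$ of $\Gamma_\eps(A,B)$ has no critical points at all. This is the contrapositive of Lemma \ref{Lemma::CritAndDegeneratePoints}: a critical point $\mu^*$ of $\bar{\xi}_{B,\eps}$ would produce a coefficient vector $c^* = B\hat{\mu}^* \in \R^{\cA}_\eps$ for which $x^* = 0$ is a degenerate singular point of $f_{c^*}$, contradicting the assumption.

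Second, since $\cA$ has codimension $2$, the hypotheses of Proposition \ref{Prop_NoCrit_NoInnerChamber} are satisfied (trivially, zero critical points is at most one critical point), so the complement $\R^{2}\setminus \Gamma_\eps(A,B)$ has at most two connected components, and those components are unbounded. This yields exactly the conclusion of the theorem.

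In short, the main content of the argument is already carried by Proposition \ref{Prop_NonDegSing_NoCrit} (which translates the geometric non-degeneracy hypothesis into the analytic statement that $\bar{\xi}_{B,\eps}$ is everywhere a local immersion) and by Proposition \ref{Prop_NoCrit_NoInnerChamber} (which uses the normal-vector identity \eqref{Eq::KapranovNormal}, Lemma \ref{lem: self intersection}, and Corollary \ref{cor:unbounded components} to rule out self-intersections and bounded chambers). There is no genuine obstacle to the proof beyond recording the chain of implications; any difficulty lies in the preceding propositions rather than in the combination step itself.
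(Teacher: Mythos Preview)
Your proposal is correct and matches the paper's own proof essentially verbatim: the paper simply states that the theorem follows directly from Proposition~\ref{Prop_NonDegSing_NoCrit} and Proposition~\ref{Prop_NoCrit_NoInnerChamber}. Your additional commentary unpacking the chain of implications is accurate and adds no gaps.
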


\begin{proof}
The statement follows directly from Proposition \ref{Prop_NoCrit_NoInnerChamber} and Proposition \ref{Prop_NonDegSing_NoCrit}.
\end{proof}

\subsection{Signed supports without degenerate singular points}
\label{Section::NoDegenerateSingPoints}
Now, we show that for certain signed supports $(\cA,\eps)$, the singular points of the hypersurfaces $Z(f_c)$ are non-degenerate singular for all  $c \in \R_{\eps}^{\cA}$ .

We call a pair of parallel affine hyperplanes $\mathcal{H}_{v,a},\mathcal{H}_{v,b} \subseteq \R^n$ ($a \geq b$) \emph{enclosing hyperplanes of the positive exponents} $\cA_+$ if 
\[ \cA_+ \subseteq \mathcal{H}^-_{v,a} \cap \mathcal{H}^+_{v,b} \quad \text{and} \quad  \cA_- \subseteq \R^n \setminus \big( \inte(\mathcal{H}^{-}_{v,a}) \cap \inte(\mathcal{H}^{+}_{v,b}) \big).\]
Enclosing hyperplanes $\mathcal{H}_{v,a},\mathcal{H}_{v,b}$ are \emph{strict enclosing hyperplanes} of $\cA_+$  if additionally $\inte(\mathcal{H}^{+}_{v,a}) \cap \cA_- \neq \emptyset$ and $\inte(\mathcal{H}^{-}_{v,b}) \cap \cA_- \neq \emptyset$. We define \emph{strict enclosing hyperplanes of the negative exponents} $\cA_-$ in a similar way. For an illustration, we refer to Figure \ref{FIG2}.

Our first statement concerns exponential sums in two variables.
\begin{thm}
\label{Thm_TwoEnclHyp}
Let $(\cA,\eps)$ be a full-dimensional signed support in $\R^2$ and assume that both $\cA_+$ and $\cA_-$ have a pair of strict enclosing hyperplanes. Then 
\begin{itemize}
    \item[(i)] for every $c \in \R^{\cA}_{\eps}$ and $x \in \Sing(f_c)$, the Hessian matrix $\Hess_{f_c}(x)$ has a positive and a negative eigenvalue.
    \item[(ii)] If $\cA$ consists of $5$ exponent vectors, then the complement of the signed reduced $A$-discriminant $\Gamma_\eps(A,B)$ consists of at most two connected components.
\end{itemize}
\end{thm}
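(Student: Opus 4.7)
The plan is to prove (i) by computing the Hessian at a singular point in closed form and exhibiting two directions in $\R^2$ on which the associated quadratic form takes opposite signs; part (ii) is then immediate from Theorem~\ref{Thm_NoInnerChamber}.

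For (i), fix $c \in \R^{\cA}_\eps$ and $x \in \Sing(f_c)$, and set $\beta_i := c_i e^{\alpha_i \cdot x}$, so that $\sign(\beta_i) = \eps_i$ and the singularity equations \eqref{Eq::EqsForSingularZeros} read $\sum_i \beta_i = 0$ and $\sum_i \beta_i \alpha_i = 0$. Differentiating once more gives $\Hess_{f_c}(x) = \sum_i \beta_i \alpha_i \alpha_i^\top$, hence
\[ Q(u) := u^\top \Hess_{f_c}(x) u = \sum_i \beta_i (\alpha_i \cdot u)^2. \]
The key observation is that expanding the square and using both singularity constraints makes $Q$ invariant under a scalar shift: for every $t \in \R$,
\[ Q(u) = \sum_i \beta_i (\alpha_i \cdot u - t)^2. \]

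Now let $\mathcal{H}_{v,a}, \mathcal{H}_{v,b}$ with $a \geq b$ be a pair of strict enclosing hyperplanes of $\cA_-$, and set $t := (a+b)/2$ and $r := (a-b)/2$. The enclosing hypothesis forces $(\alpha_i \cdot v - t)^2 \leq r^2$ for every $\alpha_i \in \cA_-$ and $(\alpha_i \cdot v - t)^2 \geq r^2$ for every $\alpha_i \in \cA_+$, with the latter inequality strict for at least one positive exponent (this is precisely the strictness condition). Combined with $\sign(\beta_i) = \eps_i$, every summand of
\[ Q(v) - r^2 \sum_i \beta_i \;=\; \sum_i \beta_i \big( (\alpha_i \cdot v - t)^2 - r^2 \big) \]
is non-negative, and at least one is strictly positive. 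Invoking $\sum_i \beta_i = 0$ gives $Q(v) > 0$. The symmetric argument, applied to the direction $w$ of the strict enclosing hyperplanes of $\cA_+$, yields $Q(w) < 0$. Hence the quadratic form of the $2\times 2$ symmetric matrix $\Hess_{f_c}(x)$ attains both signs on $\R^2$, so $\Hess_{f_c}(x)$ has one positive and one negative eigenvalue; this proves (i).

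For (ii), $\#\cA = 5$ forces the codimension to be $k = 2$. Part (i) implies in particular that $\Hess_{f_c}(x)$ is invertible for every $c \in \R^{\cA}_\eps$ and every $x \in \Sing(f_c)$, so no singular point of any $Z(f_c)$ is degenerate. Theorem~\ref{Thm_NoInnerChamber} then yields that the complement of $\Gamma_\eps(A,B)$ has at most two connected components, both unbounded. The one nontrivial step in the whole argument is recognizing the shift-invariance identity for $Q$ and choosing the center $t = (a+b)/2$ to align with the enclosing geometry; once that is in place, the pointwise comparison of $(\alpha_i \cdot v - t)^2$ with $r^2$ together with the cancellation $\sum_i \beta_i = 0$ delivers $Q(v) > 0$ essentially mechanically.
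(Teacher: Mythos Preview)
Your proof is correct and takes a genuinely different route from the paper's. The paper first applies an affine change of coordinates to make $v=(1,0)^\top$ and $w=(0,1)^\top$, then restricts $f_c$ to the two coordinate lines through the singular point $x^*$ and appeals to Descartes' rule of signs: the enclosing hypothesis forces at most two sign changes in each restricted coefficient sequence, so the double root at $0$ exhausts the root count, the restricted function keeps a constant sign off $0$, and hence the diagonal Hessian entries $\partial^2 f_c/\partial x_1^2(x^*)$ and $\partial^2 f_c/\partial x_2^2(x^*)$ have opposite signs, giving $\det\Hess_{f_c}(x^*)<0$.

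Your approach instead works directly with the quadratic form $Q(u)=u^\top\Hess_{f_c}(x)u$ and exploits the shift-invariance identity $Q(u)=\sum_i\beta_i(\alpha_i\cdot u - t)^2$, which is an immediate consequence of the two singularity constraints $\sum_i\beta_i=0$ and $\sum_i\beta_i\alpha_i=0$. Centering $t$ at the midpoint of each enclosing slab turns the enclosing hypothesis into a termwise sign comparison, yielding $Q(v)>0$ and $Q(w)<0$ directly. This is cleaner: it avoids Descartes' rule entirely, needs no coordinate change, and the argument that $Q$ takes both signs would work verbatim in $\R^n$ to show indefiniteness of the Hessian (though only in $\R^2$ does indefiniteness pin down the full signature). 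The paper's route extracts a little more information---the restricted function is globally one-signed along each direction---but that is not needed for the theorem. Part (ii) is handled identically in both proofs via Theorem~\ref{Thm_NoInnerChamber}.
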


\begin{proof}
Let $\mathcal{H}_{v,a}, \mathcal{H}_{v,b}$ (resp. $\mathcal{H}_{w,a'}, \mathcal{H}_{w,b'}$) enclosing hyperplanes of $\cA_+$ (resp. $\cA_-$). Using an affine change of coordinates as in Proposition \ref{Lemma::Transform}, we assume without loss of generality that $v = (1,0)^\top$, $w=(0,1)^\top$. 

For $x^* \in \Sing(f_c)$ consider the univariate exponential sums:
\begin{align*}
f_{c,x^*,v}\colon \R \mapsto \R, \quad s \mapsto f_{c,x^*,v}(s) := \sum_{i=1}^{2+k+1} c_i e^{\alpha_i\cdot( x^* + s v)} \\
f_{c,x^*,w}\colon \R \mapsto \R, \quad s \mapsto f_{c,x^*,w}(s) := \sum_{i=1}^{2+k+1} c_i e^{\alpha_i\cdot( x^* + s w)}.
\end{align*}
By construction we have:
\begin{align}
\label{Eq1:Thm_TwoEnclHyp}
 0 = f_c(x^*) = f_{c,x^*,v}(0) = f_{c,x^*,w}(0).
 \end{align}
Denoting by $\alpha_{1,i}, \alpha_{2,i}$ the first and the second coordinate of the vector $\alpha_i$, it is easy to check that
\begin{align*}
 \frac{\partial f_{c,x^*,v}}{\partial s}(s) =   \sum_{i=1}^{2+k+1} c_i \alpha_{1,i} e^{\alpha_i\cdot( x^* + s v)}, \qquad  \frac{\partial f_{c,x^*,w}}{\partial s}(s) =   \sum_{i=1}^{2+k+1} c_i \alpha_{2,i}  e^{\alpha_i\cdot( x^* + s w)},
\end{align*}
It follows that
\begin{align}
\label{Eq2:Thm_TwoEnclHyp}
 \frac{\partial f_{c,x^*,v}}{\partial s}(0) =   \frac{\partial f_{c}}{\partial x_1}(x^*) = 0 , \qquad \frac{\partial f_{c,x^*,w}}{\partial s}(0) =   \frac{\partial f_{c}}{\partial x_2}(x^*) = 0,
\end{align}
since $x^* \in Z(f_c)$ is a singular point. Combining \eqref{Eq1:Thm_TwoEnclHyp},\eqref{Eq2:Thm_TwoEnclHyp}, we have that $0$ is a root of $f_{c,x^*,v}$ (resp. $f_{c,x^*,w}$) of multiplicity at least two. 

The condition that  $\mathcal{H}_{v,a}, \mathcal{H}_{v,b}$ (resp. $\mathcal{H}_{w,a'}, \mathcal{H}_{w,b'}$) are strict enclosing hyperplanes of $\cA_+$ (resp. $\cA_-$) implies that both exponential sums have at most two sign changes in their coefficient sequence. Since Descartes' rule of signs is valid for polynomials with real exponents \cite{SimpleDescartes}, one can extend the result to exponential sums. Using Descartes' rule of signs, it follows that the multiplicity of $0$ is exactly two for both $f_{c,x^*,v}$ and $f_{c,x^*,w}$ . Furthermore,
\[f_{c,x^*,v}(s) < 0 \quad \text{and} \quad  f_{c,x^*,w}(s) > 0 \qquad \text{for all } s \neq 0.\]

So $0$ is a local maximum of $f_{c,x^*,v}$ and a local minimum of $f_{c,x^*,w}$. Therefore
\begin{align*}
  \frac{\partial^2 f_{c}}{\partial x_1^2}(x) = \frac{\partial^2 f_{c,x^*,v}}{\partial s^2}(0) <  0,  \qquad  \frac{\partial^2 f_{c}}{\partial x_2^2}(x) = \frac{\partial^2 f_{c,x^*,w}}{\partial s^2}(0) > 0,
\end{align*}
which implies that
\[ \det(\Hess_{f_c}(x^*)) =   \frac{\partial^2 f_{c}}{\partial x_1^2}(x^*)   \frac{\partial^2 f_{c}}{\partial x_2^2}(x^*) - \Big( \frac{\partial^2 f_{c}}{\partial x_1 \partial x_2}(x^*) \Big)^2 < 0.\]
Thus, $\Hess_{f_c}(x^*)$ is invertible and must have a negative and a positive eigenvalue.

If $\cA$ contains $5$ exponent vectors, then the codimension of $\cA$ is $2$. Since all singular points of $Z(f_c)$ are non-degenerate for all $c \in \R^{\cA}_\eps$ by (i), part (ii) follows from Theorem \ref{Thm_NoInnerChamber}.
\end{proof}

\begin{figure}[t]
\centering
\includegraphics[scale=0.6]{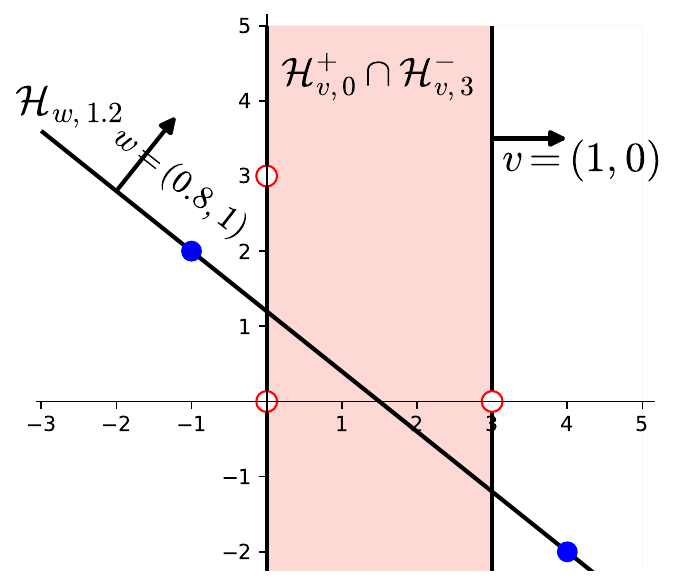}
\caption{{\small An illustration of strict enclosing hyperplanes. Consider the signed exponent vectors $\cA_+ = \left\{ \begin{bmatrix} 0 \\ 0 \end{bmatrix},\begin{bmatrix} 3 \\ 0 \end{bmatrix}, \begin{bmatrix} 0 \\ 3 \end{bmatrix} \right\}$  (depicted by red circles) and  $\cA_- = \left\{\begin{bmatrix} -1 \\ 2  \end{bmatrix},\begin{bmatrix} 4 \\ -2  \end{bmatrix} \right\}$ (depicted by blue dots). The hyperplanes   $\mathcal{H}_{v,3},\mathcal{H}_{v,0}$ with $v = (1,0)$  are strict enclosing hyperplanes of $\cA_+$. The negative exponent vectors $\cA_-$ also have a pair of strict enclosing hyperplanes given by $\mathcal{H}_{w,1.2},\mathcal{H}_{w,1.2}$ with  $w = (0.8,1)$.}}\label{FIG2}
\end{figure}

The next condition on the signed support that precludes the existence of degenerate singular points is valid for every number of variables $n$. Specifically, we require that there is only one exponent vector with negative sign.

\begin{thm}
\label{Thm::OneNegative}
Let $(\cA,\eps)$ be a full-dimensional signed support with Gale dual $B$ such that $\# \cA_- = 1$. Then we have
\begin{itemize}
    \item[(i)] for all $c \in \R_\eps^{\cA}$ and $x \in \Sing(f_c)$ the Hessian matrix $\Hess_{f_c}(x)$ has only positive eigenvalues.
    \item[(ii)] If $(\cA,\eps)$ has codimension $2$, then the complement of the signed reduced $A$-discriminant $\Gamma_{\eps}(A,B)$ consists of at most two connected components.
\end{itemize}
\end{thm}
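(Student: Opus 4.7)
The plan is to prove (i) by a reduction to a canonical form in which both the singular point and the unique negative exponent lie at the origin, after which the Hessian is manifestly positive definite; part (ii) then follows immediately from (i) combined with Theorem~\ref{Thm_NoInnerChamber}.

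For (i), let $j$ denote the unique index with $\eps_j=-1$, fix $c\in\R^{\cA}_\eps$, and pick $x^*\in\Sing(f_c)$. I would first pass to $\tilde f(y):=f_c(x^*+y)$, which is an exponential sum with the same exponents and coefficients $\tilde c_i:=c_i e^{\alpha_i\cdot x^*}$, hence the same signs; the origin is singular for $\tilde f$ and $\Hess_{\tilde f}(0)=\Hess_{f_c}(x^*)$. Next, I would apply Proposition~\ref{Lemma::Transform} with $M=I$ and $v=-\alpha_j$ to translate $\cA$ by $-\alpha_j$; by part~(iii) of that proposition this preserves the inertia of the Hessian at the corresponding singular point, and by construction it preserves $\eps$ and the full-dimensionality of $\cA$. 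So I may assume $\alpha_j=0$ and that $0$ is a singular point of $f_c$.

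Under this normalization the conditions $f_c(0)=\partial_{x_1}f_c(0)=\dots=\partial_{x_n}f_c(0)=0$ simplify, because the term indexed by $j$ contributes $c_j\alpha_j=0$ to both the gradient and the Hessian, to
\[
c_j=-\sum_{i\neq j}c_i,\qquad \sum_{i\neq j}c_i\,\alpha_i=0,\qquad \Hess_{f_c}(0)=\sum_{i\neq j}c_i\,\alpha_i\alpha_i^\top.
\]
Since $c_i>0$ for every $i\neq j$, the Hessian is a strictly positive combination of the rank-one positive semidefinite matrices $\alpha_i\alpha_i^\top$, and its kernel equals $\mathrm{span}\{\alpha_i:i\neq j\}^{\perp}$. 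Because $\alpha_j=0\in\cA$ and $\dim\Conv(\cA)=n$, the set $\cA\setminus\{0\}$ must contain $n$ linearly independent vectors, so this kernel is trivial and $\Hess_{f_c}(0)$ is positive definite. This proves (i).

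For (ii), part (i) says that every singular point of every $Z(f_c)$ with $c\in\R^{\cA}_\eps$ is non-degenerate, so the codimension~$2$ hypothesis lets me invoke Theorem~\ref{Thm_NoInnerChamber} directly to conclude that $\R^2\setminus \Gamma_\eps(A,B)$ has at most two connected components, both unbounded. The only delicate step is the reduction at the start: one must verify that the variable shift preserves the signed support and that the support shift of Proposition~\ref{Lemma::Transform} preserves the Hessian inertia, after which the positive-definiteness is a two-line linear algebra observation. I do not anticipate any genuine obstacle beyond bookkeeping.
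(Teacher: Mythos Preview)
Your proof is correct and follows essentially the same approach as the paper: translate the unique negative exponent to the origin via Proposition~\ref{Lemma::Transform}, observe that the Hessian becomes a positive linear combination of the rank-one matrices $\alpha_i\alpha_i^\top$ with $i\neq j$, and use full-dimensionality to conclude positive definiteness; part~(ii) then follows from Theorem~\ref{Thm_NoInnerChamber}. The only difference is that the paper skips your preliminary translation of $x^*$ to the origin and instead writes $\Hess_{f_c}(x)=\sum_{i\neq j}(e^{\alpha_i\cdot x}c_i)\,\alpha_i\alpha_i^\top$ directly at an arbitrary singular point $x$, which yields the same conclusion with one fewer reduction step.
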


\begin{proof}
Write $\cA_+ = \{ \alpha_1, \dots , \alpha_{n+k} \}$ and  $\cA_- = \{ \alpha_{n+k+1} \}$. Using Proposition \ref{Lemma::Transform}, we assume without loss of generality that $\alpha_{n+k+1} = 0$. Under this assumption, the Hessian of $f_c$ at $x \in \Sing(f_c)$ is given by
\begin{align}
\label{Eq_OneNegProof}
 \Hess_{f_c}(x)  = \sum_{i=1}^{n+k} (e^{\alpha_i \cdot x}c_i) \alpha_i \cdot \alpha_{i}^\top = \tilde{A} \diag \big( (e^{\alpha_i \cdot x}c_i)_{i=1,\dots,n+k}) \big) \tilde{A}^\top,
 \end{align}
where
\[ \tilde{A} = \begin{bmatrix} \alpha_1 & \dots & \alpha_{n+k} \end{bmatrix} \in \R^{n \times (n+k)}.\]
Since the affine hull of $\alpha_1, \dots , \alpha_{n+k+1}$ has dimension $n$ and $\alpha_{n+k+1} = 0$, it follows that $\rk(\tilde{A}) = n$.

Since $e^{\alpha_i \cdot x}c_i$ is positive for $i =1,\dots,n+k$, their square root is a real number. This gives
\[
\Hess_{f_c}(x)  = \big( \tilde{A} \diag \big( (\sqrt{e^{\alpha_i \cdot x}c_i})_{i=1,\dots,n+k}) \big)\big( \tilde{A}\diag \big( (\sqrt{e^{\alpha_i \cdot x}c_i})_{i=1,\dots,n+k}) \big)  \big)^\top
\]
and as $\tilde{A}$ has full rank
\[ \rk\big( \tilde{A} \diag \big( (e^{\alpha_i \cdot x}c_i)_{i=1,\dots,n+k}) \big) \tilde{A}^\top \big) = \rk( \tilde{A} \diag \big( (\sqrt{e^{\alpha_i \cdot x}c_i})_{i=1,\dots,n+k}) \big) )  = \rk \tilde{A} = n.\]
Thus, $ \Hess_{f_c}(x)$ is positive semi-definite and of full rank, which implies that all of its eigenvalues are positive. This shows (i).

Since  all singular points of $Z(f_c)$ are non-degenerate for all $c \in \R^{\cA}_\eps$, part (ii) follows from Theorem \ref{Thm_NoInnerChamber}.
\end{proof}

Our final condition on the signed support, precluding the existence of degenerate singular points in the hypersurfaces $Z(f_c)$, requires the positive and negative exponent vectors to be separated by a simplex, as follows. We recall the definition of the negative vertex cone of a simplex from \cite[Section 4]{DescartesHypPlane}. For an $n$-simplex $P \subseteq \R^n$ with vertices $\mu_0, \dots, \mu_n$, the \emph{negative vertex cone} at vertex $\mu_k$ equals
\[P^{-,k} := \mu_k + \Cone\big( \mu_k - \mu_0, \dots , \mu_k - \mu_n \big).\]
We write $P^-$ for the union of $P^{-,0}, \dots , P^{-n}$. We refer to Figure \ref{FIG3} for such a simplex and its negative vertex cones in the plane.

\begin{thm}
\label{Thm::Simplex}
Let $P \subseteq \R^{n}$ be an $n$-simplex and let $(\cA,\eps)$ be a signed support in $\R^n$ with Gale dual matrix $B$ such that $\cA_+ \subseteq P$, $\cA_- \subseteq P^-$, and $\cA \cap \inte \big( P \cup P^-\big) \neq \emptyset$. Then 
\begin{itemize}
    \item[(i)] for all $c \in \R^{\cA}_\eps$ and all singular points $x \in \Sing(f_c)$ the eigenvalues of $\Hess_{f_c}(x)$ are negative.
    \item[(ii)] If $\cA$ has codimension $2$, then the complement of the signed reduced A-discriminant $\Gamma_{\eps}(A,B)$ consists of at most two connected components.
\end{itemize}
\end{thm}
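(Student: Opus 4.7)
My plan mirrors the approach of Theorem \ref{Thm::OneNegative}: establish (i) on the signature of the Hessian at any singular point, after which (ii) follows at once from Theorem \ref{Thm_NoInnerChamber}, since negative definiteness implies non-degeneracy. The substantive work will be in proving (i).

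For (i), I would first apply Proposition \ref{Lemma::Transform} to reduce to the case where $P$ is the standard $n$-simplex with vertices $\mu_0=0,\mu_1=e_1,\ldots,\mu_n=e_n$ and the singular point under consideration is the origin; writing $d_i:=c_i$, the singular-point equations become $\sum_i d_i=0$ and $\sum_i d_i\alpha_i=0$ with $\sign(d_i)=\eps_i$, and the task is to show that $\Hess_{f_c}(0)=\sum_i d_i\alpha_i\alpha_i^\top$ is negative definite. The natural tool will be Descartes' rule of signs for exponential sums, applied in the $n+1$ directions $w_0,\ldots,w_n$ given by the gradients of the affine barycentric coordinate functions $\pi_k$ defined by $\pi_k(\mu_l)=\delta_{kl}$. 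The defining inequalities of $P$ and of its negative vertex cones yield a sharp separation in each $w_k$-direction: $\alpha\cdot w_k\in[-b_k,\,1-b_k]$ for $\alpha\in\cA_+$, $\alpha\cdot w_k\geq 1-b_k$ for $\alpha\in\cA_-\cap P^{-,k}$, and $\alpha\cdot w_k\leq -b_k$ for $\alpha\in\cA_-\cap P^{-,l}$ with $l\neq k$, where $b_k=\pi_k(0)$. Thus the coefficient sequence of the univariate exponential sum $g_k(s):=f_c(sw_k)$, ordered by exponent, has shape $-\cdots-\,+\cdots+\,-\cdots-$ with at most two sign changes. Arguing as in the proof of Theorem \ref{Thm_TwoEnclHyp}, the conditions $g_k(0)=g_k'(0)=0$ together with Descartes' rule force $0$ to be the unique real root of $g_k$ (of multiplicity exactly two), the asymptotic behavior gives $g_k\leq 0$ on $\R$, and strict negativity of $w_k^\top\Hess_{f_c}(0)w_k=g_k''(0)$ follows because a third-order vanishing at $0$ would contradict the two-root bound.

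Directional negativity along only the $n+1$ vectors $w_k$ does not by itself imply negative definiteness (elementary $2\times 2$ examples show that $w_k^\top H w_k<0$ for all $k$ is compatible with $H$ indefinite). To close the argument I would study the symmetric $(n+1)\times(n+1)$ matrix $M=(w_k^\top\Hess_{f_c}(0)w_l)_{k,l}$, whose entries equal $M_{kl}=\sum_i d_i\pi_k(\alpha_i)\pi_l(\alpha_i)$. Substituting $\alpha_i=\sum_k\pi_k(\alpha_i)\mu_k$ into $\sum_i d_i\alpha_i=0$ and using the linear independence of the points $(1,\mu_k)$ in $\R^{n+1}$ yields $\sum_i d_i\pi_k(\alpha_i)=0$ for each $k$, equivalently $M\mathds{1}=0$. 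Since $\sum_k w_k=0$, negative definiteness of $\Hess_{f_c}(0)$ on $\R^n$ is equivalent to $M$ being negative semi-definite with kernel precisely $\operatorname{span}(\mathds{1})$, and the additional hypothesis $\cA\cap\inte(P\cup P^-)\neq\emptyset$ should enter here to secure the strictness of the kernel.

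The hard part will be exactly this final passage from the directional inequalities along the $w_k$'s to the matrix inequality on $M$. Equivalently, after dividing the weights $d_i$ by the common positive mass $S:=\sum_{i\in+}d_i$, one must establish the variance domination $\operatorname{Cov}(p^-)\succ\operatorname{Cov}(p^+)$ for the probability distributions $p^\pm$ supported on $\cA_\pm$ and proportional to $|d_i|$, which share a common mean lying in $P$ by the singular-point equations. I expect this comparison to follow from the simplex geometry combined with a Jensen-type argument that exploits the interior hypothesis, but it is not captured by the one-dimensional Descartes' rule along the axis directions $w_k$ alone and will require additional structural input specific to the separating simplex.
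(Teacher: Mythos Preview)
Your approach diverges substantially from the paper's, and the gap you flag at the end is genuine.

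The paper's proof of (i) is short and sidesteps the Descartes-rule machinery entirely. After the same affine reduction to $P=\Conv(0,e_1,\dots,e_n)$, it cites a known concavity result from the global-optimization literature (Maranas--Floudas): under precisely the simplex/negative-vertex-cone hypotheses on the signed exponents, the \emph{polynomial} map $y\mapsto\sum_i c_i\,y^{\alpha_i}$ is concave on $\R^n_{>0}$, i.e.\ $\Hess_{f_c\circ\Log}(y)$ is negative definite for every $y\in\R^n_{>0}$. Writing $f_c=(f_c\circ\Log)\circ\Exp$ and applying the chain rule for Hessians at a singular point $x$ yields
\[
\Hess_{f_c}(x)=J_{\Exp}(x)^\top\,\Hess_{f_c\circ\Log}\bigl(\Exp(x)\bigr)\,J_{\Exp}(x),
\]
and Sylvester's law of inertia transfers negative definiteness across. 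Part (ii) then follows from Theorem~\ref{Thm_NoInnerChamber}, exactly as you say.

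Your directional argument along the barycentric gradients $w_k$ is correct as far as it goes and does establish $w_k^\top H w_k<0$ for each $k$, but as you already observe, this is insufficient for negative definiteness. The reduction to showing that $M=W^\top H W$ is negative semidefinite with kernel $\operatorname{span}(\mathds{1})$ is the right reformulation, and the covariance-domination restatement is appealing, but nothing in the one-dimensional Descartes bounds you have assembled controls the off-diagonal entries of $M$, and I do not see a Jensen-type argument that closes it without essentially reproving the Maranas--Floudas result. If you want a self-contained route, that is what you would have to supply; otherwise, the clean fix is to cite that concavity theorem directly and transfer via the $\Exp$/$\Log$ chain rule as above.
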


\begin{proof}
Note that the negative vertex cones are preserved under affine transformation of $P$, see e.g. \cite[Lemma 4.5]{DescartesHypPlane}. Thus by Proposition \ref{Lemma::Transform}, we can assume without loss of generality that $P = \Conv( 0, e_1, \dots , e_n)$.

Denote $\Exp\colon \R^n \to \R^n_{>0}$ and $\Log \colon \R^n_{>0} \to \R^n$ the coordinate-wise exponential and logarithm maps. From \cite[Theorem 7]{Maranas_Floudas}, it follows that the Hessian of the function
\[ f_c \circ \Log \colon \R^n_{>0}\to \R, \quad  y \mapsto \sum_{i=1}^{n+k+1}c_i y^{\alpha_i}\]
is negative definite for all $y \in \R^n_{>0}$. If $x \in \R^n$ is a singular point of $f_c$, then from \cite[Corollary 1]{skorski2019chain} it follows that
\[ \Hess_{f_c}(x) = \Hess_{f_c \circ \Log \circ \Exp}(x) =  \big( J_{\Exp} (x) \big)^\top \Hess_{f_c \circ \Log} (\Exp(x)) J_{\Exp}(x). \]
Thus, all the eigenvalues of $\Hess_{f_c}(x)$ are negative by Sylvester's law of inertia \cite[Chapter 7]{meyer04}. In particular, all singular points of $Z(f_c)$ are non-degenerate for all $c \in \R^{\cA}_\eps$.  Part (ii) follows from Theorem \ref{Thm_NoInnerChamber}.
\end{proof}

\begin{figure}[t]
\centering
\includegraphics[scale=0.7]{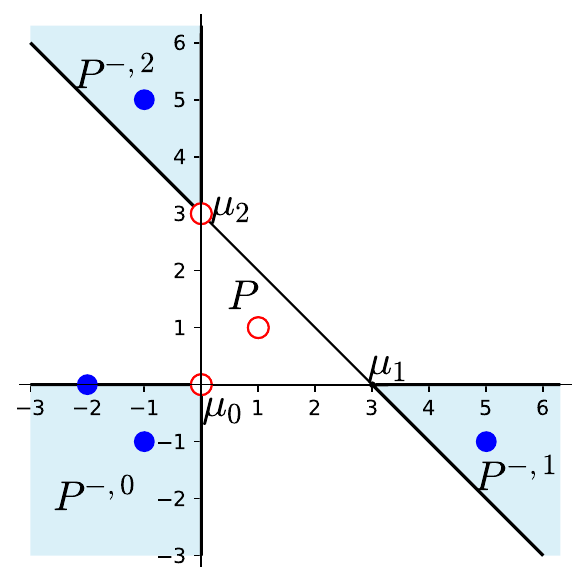}
\caption{{\small A simplex $P = \Conv( (0,0),(3,0),(0,3))$ separating the signed exponent vectors $\cA_+ = \left\{ \begin{bmatrix} 0 \\ 0 \end{bmatrix},\begin{bmatrix} 0 \\ 3 \end{bmatrix}, \begin{bmatrix} 1 \\ 1 \end{bmatrix} \right\}$  (marked by red circles) and  $\cA_- = \left\{\begin{bmatrix} -1 \\ 5  \end{bmatrix},\begin{bmatrix} 5 \\ -1  \end{bmatrix},\begin{bmatrix} -1 \\ -1  \end{bmatrix},\begin{bmatrix} -2 \\ 0  \end{bmatrix} \right\}$ (marked by blue dots). }}\label{FIG3}
\end{figure}

Using Theorem \ref{Thm::OneNegative} and Theorem \ref{Thm::Simplex}, we give conditions on the signed support $(\cA,\eps)$ such that all possible isotopy types of $Z(f_c), \, c \in \R^\cA_\eps$ are given by some signed tropical hypersurfaces (cf. Theorem \ref{Thm:Viro}).

\begin{cor}
   \label{Thm::ViroAll}
   Let $(\cA,\eps)$ be a full-dimensional signed support of codimension $2$ with Gale dual $B$ such that either $\# \cA_- = 1$ or $\cA_+$ and $\cA_-$ are separated by a  simplex as in Theorem \ref{Thm::Simplex}. If for each proper face $F  \subsetneq \Conv(\cA)$ the restricted signed support $(\cA_F,\eps_F)$ has a non-trivial separating hyperplane, then for each smooth hypersurface $Z(f_c)$ with $c \in \R_\eps^\cA$ there exists $h \in \R^\cA$ such that the signed tropical hypersurface $\Trop_\eps(\cA,h)$ and $Z(f_c)$ have the same isotopy type.
\end{cor}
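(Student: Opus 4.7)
The plan is to combine the codimension-two structure theorems of this section with Viro's patchworking. First, the hypothesis that either $\#\cA_-=1$ or $(\cA_+,\cA_-)$ is separated by a simplex allows me to invoke Theorem~\ref{Thm::OneNegative}(ii) or Theorem~\ref{Thm::Simplex}(ii), concluding that $\R^{2}\setminus\Gamma_\eps(A,B)$ consists of at most two unbounded connected components. Second, the hypothesis on proper faces combined with Theorem~\ref{Thm::SepHyperplaneAdisc} yields $B^\top\Log\lvert\tilde{\nabla}_{\cA_F,\eps_F}\rvert=\emptyset$ for every proper face $F\subsetneq\Conv(\cA)$. Hence Theorem~\ref{Thm::RojasRusek_Adiscriminant} applies directly: the isotopy class of every smooth $Z(f_c)$ is constant on each connected component of $\R^{2}\setminus\Gamma_\eps(A,B)$, so among all smooth zero sets there are at most two isotopy classes in total.

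The remaining task is to realize each of these (at most two) classes as a signed tropical hypersurface. For any generic $h\in\R^{\cA}$, Viro's Theorem~\ref{Thm:Viro} gives $\Trop_\eps(\cA,h)\simeq Z(g_t)$ for $t\gg 0$, where the coefficient vector of $g_t$ is $(\eps_i e^{h_i t})_i$, so that $B^\top\Log\lvert\mathrm{coeff}(g_t)\rvert = t\,B^\top h$ traces a ray from the origin in the direction $B^\top h\in\R^{2}$. Since $B$ has full column rank $k=2$, the linear map $B^\top\colon \R^{\cA}\to\R^{2}$ is surjective, so one can prescribe $B^\top h$ to be any given vector of $\R^{2}$; moreover, Viro-genericity is a residual condition on $h$, so one may simultaneously arrange $h$ to be generic.

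For each unbounded component $C$ of $\R^{2}\setminus\Gamma_\eps(A,B)$, I would pick a vector $v_C\in\R^{2}\setminus\{0\}$ such that the ray $\{t v_C\mid t>R\}$ lies in $C$ for some $R>0$, and then choose $h$ generic with $B^\top h=v_C$. Applying Viro's theorem to such an $h$ produces $Z(g_t)$ whose signed reduced coordinate $t v_C$ lies in $C$ for all $t>R$, so Theorem~\ref{Thm::RojasRusek_Adiscriminant} shows that the isotopy class of $\Trop_\eps(\cA,h)\simeq Z(g_t)$ coincides with the class associated to $C$, and hence with that of any smooth $Z(f_c)$ whose image falls in $C$.

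The main obstacle is the existence of a suitable asymptotic direction $v_C$ for each unbounded chamber. This is where I would use the fact that the parametrization $\bar{\xi}_{B,\eps}$ is proper, as established in the proof of Proposition~\ref{Prop_NoCrit_NoInnerChamber}: $\Gamma_\eps(A,B)$ therefore has finitely many ends in $\R^{2}$ and decomposes the circle at infinity into finitely many open arcs, each contained in the closure of exactly one unbounded connected component of the complement. Any $v_C$ lying in the open cone over such an arc for $C$ then yields the required direction, which closes the argument.
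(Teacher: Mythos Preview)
Your argument is correct, and the first half (at most two unbounded chambers, face discriminants empty, at most two isotopy classes) coincides with the paper's proof. The second half, however, is a genuinely different route. The paper proceeds case by case: for $\#\cA_-=1$ with $\alpha_{n+3}\in\inte(\Conv(\cA))$ it picks a specific lifting $h$ with $h_{n+3}$ dominant, observes concretely that $\Trop_\eps(\cA,h)\neq\emptyset$ while $\Trop_\eps(\cA,-h)=\emptyset$, and concludes that these two non-isotopic tropical hypersurfaces must exhaust the two possible classes; the simplex case is handled analogously. Your approach is uniform and coordinate-free: you exploit the identity $B^\top\Log\lvert\mathrm{coeff}(g_t)\rvert=t\,B^\top h$ to steer the Viro family along a prescribed ray, then use unboundedness of each chamber to land the ray inside it. This avoids any explicit identification of the isotopy types and treats both hypotheses at once. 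The price is that two steps are a bit informal: the claim that one may choose $h$ generic \emph{and} with $B^\top h=v_C$ needs the observation that the fibers of $B^\top$ not meeting the generic locus lie over finitely many lines in $\R^2$, so one may first perturb $v_C$ inside the open cone; and the ``circle at infinity'' picture is really the one-point compactification, where properness and injectivity of $\bar{\xi}_{B,\eps}$ make $\Gamma_\eps(A,B)\cup\{\infty\}$ an embedded circle in $S^2$, so a small punctured disk at $\infty$ meets each complementary component in a nonempty open set containing all sufficiently long rays in an open range of directions. With these two clarifications your proof is complete.
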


\begin{proof}
    In both cases, the complement of $\Gamma_\eps(A,B)$ has at most two connected components by Theorem \ref{Thm::OneNegative} or Theorem \ref{Thm::Simplex}. Since $(\cA_{F},\eps_F)$ has a non-trivial separating hyperplane for every proper face $F \subsetneq \Conv(\cA)$, we have  $\nabla_{\cA_F,\eps_F} = \emptyset$ by Theorem \ref{Thm::SepHyperplaneAdisc}. From Theorem \ref{Thm::RojasRusek_Adiscriminant} follows that the hypersurfaces $Z(f_c)$ with $c \in \R^\cA_\eps$  have at most two different isotopy types.

    First, we focus on the case $\# \cA_- = 1$. Assume with out loss of generality that $\cA_- = \{ \alpha_{n+3}\}$. If $\alpha_{n+3}$ is contained in the boundary of $\Conv(\cA)$, then there exist a hyperplane $\mathcal{H}_{v,a} \subseteq \R^n$ such that $\alpha_{n+3} \in \mathcal{H}_{v,a}$ and $\cA \subseteq \mathcal{H}_{v,a}^+$ (cf. \cite[Corollary 2.5]{JoswigTheobald_book}). Thus $(\cA,\eps)$ has a non-trivial separating hyperplane, which implies that  all $Z(f_c)$ with $c \in \R^{\cA}_\eps$ have the same isotopy type by Theorem~\ref{Thm::SepHyperplane}. 
    
   If $\alpha_{n+3} \in \inte( \Conv(\cA))$, then choose a generic $h \in \R^{\cA} \cong  \R^{n+3}$ such that $h_{n+3} > h_i$ for $i =1, \dots , n+2$. By construction, we have  $\Trop_\eps(\cA,h) \neq \emptyset$ and $\Trop_\eps(\cA,-h) = \emptyset$. By Theorem \ref{Thm:Viro}, there exist $c_1,c_2 \in \R^\cA_\eps$ such that $Z(f_{c_1})$ and $Z(f_{c_2})$ are isotopic to  $\Trop_\eps(\cA,h)$ and to $\Trop_\eps(\cA,-h)$ respectively. Since the number of possible isotopy types is at most two, it follows that the possible isotopy types are given by $\Trop_\eps(\cA,h)$ and  $\Trop_\eps(\cA,-h)$.

    Next, we consider the case when $\cA_+$ and $\cA_-$ are separated by a simplex $P \subseteq \R^n$. If one of the negative simplex cones $P^{-,0}, \dots , P^{-,n}$ does not contain any positive exponent vector, then $(\cA,\eps)$ has a non-trivial separating hyperplane and all $Z(f_c)$ with $c \in \R^{\cA}_\eps$ have the same isotopy type by Theorem \ref{Thm::SepHyperplane}. If $P^{-,i} \cap \cA_+ \neq \emptyset$ for each $i=0,\dots,n$, then a similar argument as above shows that there exists two signed tropical hypersurfaces which are not isotopic to each other. This concludes the proof.
\end{proof}

\section{Bivariate $5$-nomials}
\label{Section::5nomials}
For bivariate $5$-nomials, the signed reduced $A$-discriminant has at most $2$ critical points by Lemma \ref{Lemma:NumCritPoints}. If there is only one critical point, then $\R^k \setminus \Gamma_\eps(A,B)$ has a simple structure, it has at most two connected components, which are both unbounded (cf. Proposition \ref{Prop_NoCrit_NoInnerChamber}). In this section, we give a complete description of the geometry of the signed support of a bivariate $5$-nomial whose signed reduced $A$-discriminant has two critical points. In our experiments, if the signed reduced $A$-discriminant had two critical points,  then its complement had a bounded chamber. We conjecture that this is always true, however we do not have a proof of this statement nor a counter example.

\begin{conj}
\label{Lemma:2CritInnerChamber}
Let $(\cA,\eps)$ be the signed support of a bivariate $5$-nomial and 
    let 
    \[\bar{\xi}_{B,\eps}\colon \{ \mu \in \R \mid  \sign(B \begin{bmatrix}\mu \\  1 \end{bmatrix}) = \eps \} \to \R^2\]
    be the parametrization map of $\Gamma_\eps(A,B)$ as defined in \eqref{Eq_AffineSignedRedParam}. If $\bar{\xi}_{B,\eps}$ has two critical points, then the complement of $\Gamma_\eps(A,B)$ has a bounded connected component.
\end{conj}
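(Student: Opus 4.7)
The plan is to reduce the conjecture to non-injectivity of the parametrization $\bar{\xi}_{B,\eps}$, and then to force non-injectivity from the presence of two cusps. The domain $I = \{\mu \in \R \mid \sign(B\hat{\mu}) = \eps\}$ is the intersection of five open half-lines in $\R$, hence a single open interval $(\alpha,\beta)$, and it contains both critical points $\mu_1 < \mu_2$. At each endpoint at least one coordinate of $B\hat{\mu}$ vanishes, so $\bar{\xi}_{B,\eps}(\mu)\to\infty$ as $\mu\to\alpha^+$ or $\mu\to\beta^-$; hence $\bar{\xi}_{B,\eps}$ extends to a proper continuous map from the one-point compactification of $(\alpha,\beta)$. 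By Lemma \ref{lem:proper} and Corollary \ref{cor:unbounded components}, if $\bar{\xi}_{B,\eps}$ were injective then $\R^2\setminus\Gamma_\eps(A,B)$ would consist of exactly two unbounded components, so producing a self-intersection of $\bar{\xi}_{B,\eps}$ suffices.

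Next I would localize where the self-intersection must occur. The Kapranov normality formula \eqref{Eq::KapranovNormal} says that at every regular $\mu$ the tangent line of $\Gamma_\eps(A,B)$ has slope $-\mu$, so two distinct regular parameters give non-parallel tangent lines. Applying Lemma \ref{lem: self intersection} to the restrictions of $\bar{\xi}_{B,\eps}$ to $(\alpha,\mu_2)$ and to $(\mu_1,\beta)$, each of which contains at most one critical point, therefore shows that $\bar{\xi}_{B,\eps}$ is injective on each of these subintervals. Consequently the only possible self-intersections are between the outer arcs $A_1 := \bar{\xi}_{B,\eps}((\alpha,\mu_1))$ and $A_3 := \bar{\xi}_{B,\eps}((\mu_2,\beta))$, and it is enough to show $A_1\cap A_3 \neq \emptyset$. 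Once a crossing $q = \bar{\xi}_{B,\eps}(a) = \bar{\xi}_{B,\eps}(b)$ with $a<\mu_1<\mu_2<b$ is produced, the sub-arc $\bar{\xi}_{B,\eps}([a,b])$ is, by the injectivity just noted, a simple closed curve passing through both cusps, and the Jordan curve theorem produces a bounded region of its complement, which in turn must contain a connected component of $\R^2\setminus\Gamma_\eps(A,B)$.

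The main obstacle is forcing $A_1\cap A_3\neq\emptyset$. I would attempt this via a turning-number analysis: the unit tangent $T(\mu) = \bar{\xi}'_{B,\eps}(\mu)/\|\bar{\xi}'_{B,\eps}(\mu)\|$ is defined off the two cusps; the underlying unoriented tangent line rotates monotonically with $\mu$ (its slope is $-\mu$), while $T$ flips sign at each cusp $\mu_i$ (as for the normal form $(t^2,t^3)$), so one can compute the total signed turning of $T$ along $\bar{\xi}_{B,\eps}$ and compare it with the turning a proper non-self-intersecting arc from $\infty$ to $\infty$ is allowed to have, using the asymptotic directions $\pm B_{i_\alpha,:}^\top$ and $\pm B_{i_\beta,:}^\top$ (determined by which sign constraint degenerates at each endpoint) to pin down the limiting directions at infinity. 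The incompatibility of the two forced cusp-reversals with a simple-arc turning budget should produce the crossing. An alternative, more algebraic route is to use the Plücker genus formula: $\Gamma_\eps(A,B)$ is a rational plane curve, so degree $d$ with two cusps forces $\tfrac{(d-1)(d-2)}{2}-2$ additional nodes over $\mathbb{C}$, and one would then have to prove that at least one of those nodes is a real ordinary double point lying in the image of $I$. A third avenue, via Lemma \ref{Lemma::CritAndDegeneratePoints}, is to interpret the two cusps as two degenerate hypersurfaces $Z(f_{c_1}), Z(f_{c_2})$ and analyse wall-crossing in coefficient space to force a compact chamber of $\R^\cA_\eps$ whose $B^\top\Log|\cdot|$-image is the conjectured bounded region.
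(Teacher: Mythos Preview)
The statement is presented in the paper as an open \emph{conjecture}: the authors explicitly say they have neither a proof nor a counterexample, so there is no paper proof to compare against. Your reduction is sound and mirrors the logic of Proposition~\ref{Prop_NoCrit_NoInnerChamber}: the domain is an open interval on which $\bar{\xi}_{B,\eps}$ is proper; injectivity would force exactly two unbounded complementary components by Corollary~\ref{cor:unbounded components}; and any self-crossing here is transverse (distinct parameters give distinct tangent slopes $-\mu$), so it yields a simple closed sub-loop and, via Jordan, a bounded complementary region. Your localization of a potential crossing to $A_1\cap A_3$ using Lemma~\ref{lem: self intersection} on the two overlapping subintervals $(\alpha,\mu_2)$ and $(\mu_1,\beta)$ is also correct. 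One small patch is needed: since several $A_1$--$A_3$ crossings might occur, ``$\bar{\xi}_{B,\eps}([a,b])$ is simple'' does not follow from those two injectivity statements alone; choose instead a crossing pair $(a,b)$ with $b-a$ minimal.

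The genuine gap is exactly the one you flag: forcing $A_1\cap A_3\neq\emptyset$. None of your three approaches is carried through. The turning-number idea is the most natural, but ``turning budget of a simple proper arc'' is not a well-defined invariant without pinning down the asymptotic directions and orientations at both ends and carefully determining the sign of the $\pm\pi$ jump at each cusp; making this precise is nontrivial, and it is not clear a priori that two cusp-reversals are always incompatible with embeddedness once the endpoint data coming from $B$ are taken into account. The Pl\"ucker route requires showing that some complex node is real and lands in the relevant parameter interval, which is typically delicate. The wall-crossing idea is too vague to evaluate. In short, your outline is a reasonable programme, but the central step remains open---which is precisely why the paper states this as a conjecture.
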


Given a $2$-simplex $P = \Conv(\mu_0, \mu_1,\mu_2)$, denote  by $\mathcal{H}_{v_0,d_0}, \mathcal{H}_{v_1,d_1},  \mathcal{H}_{v_2,d_2}$ the supporting hyperplanes of the facets of $P$. We choose these hyperplanes such that
\[ P =  \mathcal{H}^+_{v_0,d_0}  \cap \mathcal{H}^+_{v_1,d_1}  \cap \mathcal{H}^+_{v_2,d_2}  \]
and $\mu_i \notin \mathcal{H}_{v_i,d_i}$ for each $i = 0,1,2$. The complement of the union of the hyperplanes $\mathcal{H}_{v_0,d_0} , \mathcal{H}_{v_1,d_1} ,\mathcal{H}_{v_2,d_2} $ has $7$ chambers. One of these chambers is the simplex $P$. Three other chambers are the negative vertex cones $P^{-,0}, P^{-,1},P^{-,2}$, as introduced in Section \ref{Section::NoDegenerateSingPoints}. For these chambers we have
\begin{align}
\label{Eq::ChamberNeg}
 P^{-,i} = \bigcap_{j = 0, j \neq i}^2 \mathcal{H}^-_{v_j,d_j} \cap \mathcal{H}^+_{v_i,d_i}, \quad \text{ for } i = 0,1,2.
 \end{align}
The three other chambers in the hyperplane arrangment can be written as
\begin{align}
\label{Eq::ChamberPos}
 P^{+,i} = \bigcap_{j = 0, j \neq i}^2 \mathcal{H}^+_{v_j,d_j} \cap \mathcal{H}^-_{v_i,d_i}, \quad \text{ for } i = 0,1,2.
  \end{align}
For $i \neq j \in \{0,1,2\}$, we define the subset of $P^{+,i}$
\begin{align}
\label{Eq::ChamberPosSub}
 P^{+,i,j} :=  \bigcap_{j = 0, j \neq i}^2 \mathcal{H}^+_{v_j,d_j} \cap \mathcal{H}^-_{v_i,d_i} \cap \mathcal{H}^-_{v_j,D_j},
  \end{align}
where $D_j := v_j \cdot \mu_j > d_j$. For an illustration of these chambers, we refer to Figure \ref{FIG4}.

In the following lemmata, we focus on the special case when $P$ is the standard $2$-simplex $\Delta_2 = \Conv((0,0)^\top,(1,0)^\top,(0,1)^\top)$ and 
\begin{align}
\label{Eq::23_AMatrix}
 \cA_+ = \left\{ \begin{bmatrix} 0 \\ 0 \end{bmatrix},  \begin{bmatrix} 1 \\ 0 \end{bmatrix},\begin{bmatrix} 0 \\ 1 \end{bmatrix}\right\}, \quad \cA_- = \left\{  \begin{bmatrix} x_1 \\ y_1 \end{bmatrix}, \begin{bmatrix} x_2 \\ y_2 \end{bmatrix}\right\}.
 \end{align}
 Afterward we extend the results to the general case in Theorem \ref{Thm::5nomials}. Choose a Gale dual matrix corresponding to the exponent vectors in \eqref{Eq::23_AMatrix}  as
\begin{align}
\label{Eq::23_BMatrix}
B = \begin{bmatrix} 1-x_1-y_1 & 1-x_2-y_2 \\
x_1 & x_2\\
y_1 & y_2\\
-1 & 0 \\
0 & -1
\end{bmatrix}.
\end{align}
With that choice of the Gale dual matrix, the polynomial from \eqref{Eq::CritPoly} is a quadratic polynomial  $q_B(t) := a t^2 + b t + c$, where
\begin{equation}
    \label{Sec5_qbCoeff}
\begin{aligned}
a &:= -x_1 y_1 \left(1-x_1 -y_1 \right), \\
b &:= -x_1^{2} y_2^{2}+2 x_1 x_2 y_1 y_2 -x_2^{2} y_1^{2}+x_1^{2} y_2 +y_2^{2} x_1 +x_2^{2} y_1 +y_1^{2} x_2 -x_1 y_2 -x_2 y_1, \\
c &:=  -x_2 y_2 \left(1-x_2 -y_2\right).
\end{aligned}
\end{equation}

A point $t \in \R$ is a critical point of $\bar{\xi}_{B,\eps}$, $\eps = (1,1,1,-1-1)$ if and only if $q_B(t) = 0$ and it satisfies the inequalities:
\begin{equation}
\label{Eq::ConstOn_t}
\begin{aligned}
(1-x_1-y_1)t + 1-x_2-y_2 >0, \quad x_1 t + x_2 >0, \quad y_1 t + y_2 >0, \quad t >0.
\end{aligned}
\end{equation}

\begin{lemma}
\label{Lemma:5nom:1}
Let $\cA_+,\cA_-$ be the set of exponent vectors as defined in \eqref{Eq::23_AMatrix} and let $B$ the corresponding Gale dual matrix from \eqref{Eq::23_BMatrix}. If
\begin{itemize}
    \item[(i)] $\alpha_4 \in \Delta_2$ and $\alpha_5  \in \Delta_2^{+,i}$ for some $i \in \{ 0,1,2 \}$, or 
    \item[(ii)] $\alpha_4 \in \Delta_2^{-,i}$ and $\alpha_5  \in \Delta_2^{+,i}$ for some $i \in \{0,1,2\}$,
\end{itemize} then $\bar{\xi}_{B,\eps}$ has at most one critical point.
\end{lemma}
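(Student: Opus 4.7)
The plan is to analyze the signs of the leading coefficient $a$ and the constant coefficient $c$ of the quadratic polynomial $q_B(t)=at^2+bt+c$ given by \eqref{Sec5_qbCoeff}, and to show that under the hypotheses of either (i) or (ii) one has $a<0$ and $c>0$. Together with the constraint $t>0$ from \eqref{Eq::ConstOn_t}, this forces at most one of the two real roots of $q_B$ to be a critical point of $\bar{\xi}_{B,\eps}$.

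First I would verify that $a=-x_1 y_1(1-x_1-y_1)$ is negative in every admissible position of $\alpha_4=(x_1,y_1)^\top$. If $\alpha_4\in\inte(\Delta_2)$, then all three of $x_1$, $y_1$, $1-x_1-y_1$ are positive, so $a<0$. If instead $\alpha_4\in\Delta_2^{-,i}$ for some $i\in\{0,1,2\}$, then reading the description \eqref{Eq::ChamberNeg} off the supporting hyperplanes $x=0$, $y=0$, $x+y=1$ of $\Delta_2$ shows that exactly two of the three factors $x_1$, $y_1$, $1-x_1-y_1$ are negative while the third is positive, again giving $a<0$.

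Analogously I would verify that $c=-x_2 y_2(1-x_2-y_2)$ is positive whenever $\alpha_5=(x_2,y_2)^\top\in\Delta_2^{+,i}$: using the description \eqref{Eq::ChamberPos}, exactly one of the three factors $x_2$, $y_2$, $1-x_2-y_2$ is negative (namely $1-x_2-y_2$ when $i=0$, $x_2$ when $i=1$, and $y_2$ when $i=2$), so $c>0$.

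Combining these signs, $ac<0$ in both cases. Hence the discriminant satisfies $b^2-4ac>0$, so $q_B$ has two distinct real roots, and since their product equals $c/a<0$ these roots have opposite signs. The constraint $t>0$ from \eqref{Eq::ConstOn_t} excludes the negative root, so at most one root of $q_B$ can correspond to a critical point of $\bar{\xi}_{B,\eps}$, as desired. There is no serious obstacle here: the entire argument reduces to a case-by-case sign check on the three affine factors of $a$ and $c$, and the constraint $t>0$ does the rest.
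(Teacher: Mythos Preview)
Your approach is essentially the same as the paper's: both arguments check that $a\le 0$ and $c\ge 0$ under hypotheses (i) and (ii), and then use the constraint $t>0$ from \eqref{Eq::ConstOn_t} to conclude that $q_B$ has at most one admissible root. The only stylistic difference is that the paper invokes Descartes' rule of signs (at most one sign change in $(a,b,c)$), whereas you use Vieta's relation $t_1 t_2 = c/a < 0$ to see that the two roots have opposite sign.

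One small point: the lemma as stated allows $\alpha_4$ and $\alpha_5$ to lie on the boundaries of the chambers, so you only get $a\le 0$ and $c\ge 0$, not the strict inequalities you wrote. Your Vieta argument needs $a\ne 0$ to speak of two roots and their product, so you should handle the degenerate cases $a=0$ (then $q_B$ is at most linear) and $c=0$ (then $t=0$ is a root, excluded by $t>0$) separately. The paper's Descartes formulation covers these boundary cases uniformly without extra casework.
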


\begin{proof}
Let $a,c$ denote the coefficients of $q_B$ as in \eqref{Sec5_qbCoeff}. Both in case (i) and (ii), we have $a \leq 0, \, c \geq 0$, which implies that $q_B$ has at most one sign change in its coefficient sequence. From Descartes' rule of signs, it follows that $q_B$ has at most $1$ positive real root. By \eqref{Eq::ConstOn_t}, every critical point of $\bar{\xi}_{B,\eps}$ is a positive root of $q_B$. Therefore, $\bar{\xi}_{B,\eps}$ has at most one critical point.
\end{proof}

\begin{lemma}
\label{Lemma:5nom:2}
 Let $\cA_+,\cA_-$ be the set of exponent vectors as defined in \eqref{Eq::23_AMatrix} and let $B$ the corresponding Gale dual matrix from \eqref{Eq::23_BMatrix}. If $\alpha_4 \in \inte(\Delta_2)$ and $\alpha_5 \in \inte(\Delta_2^{-,0})$, then $\bar{\xi}_{B,\eps}$ has one critical point.
\end{lemma}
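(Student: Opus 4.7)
The plan is to invoke Lemma~\ref{Lemma:NumCritPoints} and count real roots of the quadratic $q_B(t) = at^2 + bt + c$ that lie in the region defined by the strict inequalities \eqref{Eq::ConstOn_t}. Under the hypotheses we have $x_1, y_1 > 0$, $1 - x_1 - y_1 > 0$, and $x_2, y_2 < 0$, so \eqref{Sec5_qbCoeff} immediately gives $a < 0$ and $c < 0$. Writing $t_0 := -x_2/x_1 > 0$ and $t_1 := -y_2/y_1 > 0$, the first and fourth inequalities in \eqref{Eq::ConstOn_t} are automatic for $t > 0$, so the constraint region reduces to $t > t_* := \max(t_0, t_1)$.

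The heart of the argument is to show that $q_B(t_*) > 0$ whenever $D := x_1 y_2 - x_2 y_1$ is nonzero. Writing $q_B$ in the expanded factored form $q_B(t) = \sum_{i=1}^{4} b_{1,i}^2 \prod_{j \neq i}(B\hat t)_j$ coming from \eqref{Eq::CritPoly}, the vanishing $(B\hat{t_0})_2 = 0$ collapses every summand but one and leaves the clean product
\[
q_B(t_0) \;=\; x_1^2\,(B\hat{t_0})_1\,(B\hat{t_0})_3\,(B\hat{t_0})_4\,(B\hat{t_0})_5.
\]
The factors $(B\hat{t_0})_1$, $(B\hat{t_0})_4$, $(B\hat{t_0})_5$ have signs forced by the hypotheses, and the only remaining factor is $(B\hat{t_0})_3 = D/x_1$, giving $\sign(q_B(t_0)) = \sign(D)$. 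A symmetric computation at $t_1$ yields $\sign(q_B(t_1)) = -\sign(D)$. Since $D > 0 \iff t_0 > t_1 \iff t_* = t_0$, we conclude $q_B(t_*) > 0$ in both nondegenerate cases. With $a < 0$ this forces $t_*$ to lie strictly between the two real roots of $q_B$, so there is a unique root $r \in (t_*, \infty)$; this $r$ satisfies every strict inequality in \eqref{Eq::ConstOn_t}, and is the unique critical point of $\bar{\xi}_{B,\eps}$ by Lemma~\ref{Lemma:NumCritPoints}.

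The main obstacle is the degenerate case $D = 0$, in which $t_0 = t_1 = t_*$ and $q_B(t_*) = 0$, but $t_*$ itself fails to be a critical point because $(B\hat{t_*})_2 = (B\hat{t_*})_3 = 0$ violates the sign condition $\sign(B\hat{t_*}) = \eps$. Here one uses Vieta's formulas to write the other root as $r = c/(a t_*)$ and reduces the inequality $r > t_*$ to the transparent inequality $x_1 + y_1 > x_2 + y_2$; the remaining strict inequalities in \eqref{Eq::ConstOn_t} at $r$ then follow from $r > t_* = t_0 = t_1 > 0$, so $r$ is again the unique critical point of $\bar{\xi}_{B,\eps}$.
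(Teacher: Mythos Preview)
Your proof is correct and follows the same high-level strategy as the paper---reduce to counting roots of $q_B$ in the interval $(M,\infty)$ with $M=\max\{-x_2/x_1,-y_2/y_1\}$ and show $q_B(M)>0$---but your execution is genuinely different and in fact stronger. The paper simply invokes a computer-algebra verification (\texttt{Maple}'s \texttt{IsEmpty} and \texttt{Mathematica}'s \texttt{Reduce}) to certify $q_B(M)>0$. You instead exploit the expanded form $q_B(t)=\sum_{i} b_{1,i}^{2}\prod_{j\neq i}(B\hat t)_j$, observe that at $t_0$ (resp.\ $t_1$) all but one summand collapses, and read off $\sign q_B(t_0)=\sign D$ and $\sign q_B(t_1)=-\sign D$ directly from the determinant $D=x_1y_2-x_2y_1$; the dichotomy $D>0\iff t_*=t_0$ then gives $q_B(t_*)>0$ by hand. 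This is a clean, fully elementary argument.

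You also explicitly treat the boundary case $D=0$, where $t_0=t_1=M$ and $q_B(M)=0$; here the paper's asserted strict inequality $q_B(M)>0$ actually fails, so your separate handling via Vieta (reducing $r>t_*$ to the transparent $x_1+y_1>x_2+y_2$) patches a small gap that the CAS check glosses over. In short, the paper's route buys brevity at the cost of a black-box computation, while yours buys transparency and completeness at the cost of a few extra lines.
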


\begin{proof}
The inequalities in \eqref{Eq::ConstOn_t} are equivalent to $M := \max\{ \tfrac{-x_2}{x_1}, \tfrac{-y_2}{y_1}\} < t$. Note that $M > 0$, The number of critical points of $\bar{\xi}_{B,\eps}$ is the same as the number of roots of $q_B$ in the interval $(M,\infty)$.

    Let $a,c$ denote the coefficients of $q_B$ as in \eqref{Sec5_qbCoeff}. Under the assumption of the lemma, we have $a < 0$ and $c < 0$. Thus, $q_B$ has $0$ or $2$ sign changes in its coefficient sequence. By Descartes' rule of signs $q_B $ has at most two positive roots. Moreover, if $q_B(M) > 0$, then $q_B$ has exactly one root in the interval $(M,\infty)$.

    Evaluating $q_B$ at $\tfrac{-x_2}{x_1}$ or at $\tfrac{-y_2}{y_1}$, depending which one is larger, we used the \texttt{Maple}  function \texttt{IsEmpty} \cite{maple} and the \texttt{Mathematica} function \texttt{Reduce} \cite{Mathematica}, to verify that $q_B(M) > 0$. Thus, $q_B$ has exactly one root in the interval $(M,\infty)$, which concludes the proof.
\end{proof}

\begin{lemma}
\label{Lemma:5nom:3}
  Let $\cA_+,\cA_-$ be the set of exponent vectors as defined in \eqref{Eq::23_AMatrix}, let $B$ the corresponding Gale dual matrix from \eqref{Eq::23_BMatrix} and let $a,b,c$ defined in \eqref{Sec5_qbCoeff}. Assume $\alpha_4 \in \inte(\Delta_n^{+,1})$ and $\alpha_5 \in \inte(\Delta_n^{+,2})$. The map $\bar{\xi}_{B,\eps}$ has two critical points if and only if
   $\alpha_4 \in \inte(P^{+,1,2})$ and $\alpha_5 \in \inte(P^{+,2,1})$ and the coordinates of $\alpha_4$ and $\alpha_5$ satisfy the following inequalities:
\begin{equation}
\label{Eq::Thm5nomials}
\begin{aligned}
b^2 -4ac &> 0\\
b^2 - 4 ac &< (2x_2y_1(1-x_1-y_1)+b)^2, \quad 0 < 2x_2y_1(1-x_1-y_1)+b \\
b^2 - 4 ac &< (2x_1y_2(1-x_1-y_1)+b)^2, \quad 0 > 2x_1y_2(1-x_1-y_1)+b.
\end{aligned}
\end{equation}
\end{lemma}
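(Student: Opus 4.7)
The plan is to reduce the question of two critical points to that of two distinct real roots of the quadratic $q_B$ in a prescribed open interval, and then match the standard quadratic-in-an-interval conditions against \eqref{Eq::Thm5nomials}.

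First I would pin down signs. The hypotheses $\alpha_4 \in \inte(\Delta_2^{+,1})$ and $\alpha_5 \in \inte(\Delta_2^{+,2})$ yield $x_1 < 0$, $y_1 > 0$, $1 - x_1 - y_1 > 0$ and $x_2 > 0$, $y_2 < 0$, $1 - x_2 - y_2 > 0$, so $a > 0$ and $c > 0$ in \eqref{Sec5_qbCoeff}. Under these signs, the inequalities \eqref{Eq::ConstOn_t} collapse to $t \in (L, U)$ with $L := -y_2/y_1 > 0$ and $U := -x_2/x_1 > 0$, since the first inequality of \eqref{Eq::ConstOn_t} is automatic for $t > 0$ and $t > 0$ is implied by $t > L$. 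Combined with Lemma \ref{Lemma:NumCritPoints}, this identifies the critical points of $\bar{\xi}_{B,\eps}$ with the roots of $q_B$ in $(L, U)$.

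Next, because $q_B$ is a quadratic with positive leading coefficient $a > 0$, it has two distinct real roots in the open interval $(L, U)$ if and only if
\[ b^2 - 4ac > 0, \quad q_B(L) > 0, \quad q_B(U) > 0, \quad L < -b/(2a) < U. \]
The vertex condition splits into $2aL + b < 0$ and $2aU + b > 0$. Using the completing-the-square identity $4a \cdot q_B(t) = (2at + b)^2 - (b^2 - 4ac)$ together with $a > 0$, the conditions $q_B(L) > 0$ and $q_B(U) > 0$ rewrite as $(2aL + b)^2 > b^2 - 4ac$ and $(2aU + b)^2 > b^2 - 4ac$. A direct computation from the explicit form of $a$ gives
\[ 2aL + b = 2 x_1 y_2 (1-x_1-y_1) + b, \qquad 2aU + b = 2 x_2 y_1 (1-x_1-y_1) + b, \]
so substituting these recovers exactly the inequalities listed in \eqref{Eq::Thm5nomials}.

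Finally, I would verify the chamber conditions $\alpha_4 \in \inte(P^{+,1,2})$ and $\alpha_5 \in \inte(P^{+,2,1})$, which beyond the ambient assumptions amount to the extra bounds $y_1 < 1$ and $x_2 < 1$. The plan is to show these are forced by \eqref{Eq::Thm5nomials} together with the sign conditions: assuming $y_1 \geq 1$ alongside these inequalities should yield an empty semialgebraic set, to be certified by the same computer-algebra approach used in Lemma \ref{Lemma:5nom:2} (for instance via \texttt{Maple}'s \texttt{IsEmpty} or \texttt{Mathematica}'s \texttt{Reduce}); the case $x_2 \geq 1$ is symmetric. I expect this last step to be the main obstacle: the quadratic analysis is entirely routine, but extracting the chamber bounds algebraically from \eqref{Eq::Thm5nomials} has no obvious direct derivation and appears to require symbolic assistance.
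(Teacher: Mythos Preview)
Your quadratic-in-an-interval analysis is correct and matches the paper's: once the sign hypotheses reduce \eqref{Eq::ConstOn_t} to $t \in (L,U)$ with $L = -y_2/y_1$, $U = -x_2/x_1$, the conditions $b^2-4ac>0$, $q_B(L)>0$, $q_B(U)>0$, $L < -b/(2a) < U$ rewrite exactly as \eqref{Eq::Thm5nomials} via the identities $2aL+b = 2x_1y_2(1-x_1-y_1)+b$ and $2aU+b = 2x_2y_1(1-x_1-y_1)+b$.

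The genuine divergence is in how you obtain the chamber bounds $y_1<1$, $x_2<1$. You propose to certify, by computer algebra, that \eqref{Eq::Thm5nomials} together with the ambient sign conditions forces these bounds. The paper instead derives them \emph{geometrically}, using machinery already in place: if $\alpha_5 \in \inte(\Delta_2^{+,2}) \setminus \inte(\Delta_2^{+,2,1})$ (i.e.\ $x_2 \geq 1$), then either the segment $\Conv(\alpha_4,\alpha_5)$ misses $\relint(\Delta_2)$ and a separating hyperplane gives $\Gamma_\eps(A,B)=\emptyset$ (Theorem~\ref{Thm::SepHyperplaneAdisc}), or both $\cA_+$ and $\cA_-$ admit strict enclosing hyperplanes and Theorem~\ref{Thm_TwoEnclHyp} with Proposition~\ref{Prop_NonDegSing_NoCrit} rule out any critical point; symmetrically for $y_1 \geq 1$. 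This route needs no symbolic verification at all for this lemma, whereas your route is self-contained but black-box. Since you flagged the chamber step as the main obstacle, it is worth knowing that the paper sidesteps it entirely with tools you already have available.
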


\begin{proof}
If $\relint(\Conv(\{\alpha_4,\alpha_5\}) \cap \relint(\Delta_2) = \emptyset$, then $\cA_+$ and $\cA_-$ can be separated by an affine hyperplane \cite[Section 2.2, Theorem 2]{grunbaum2003convex} and therefore $\Gamma_\eps(A,B) = \emptyset$ by Theorem \ref{Thm::SepHyperplaneAdisc}. In particular, $\Gamma_\eps(A,B)$ does not have any critical point.

If $\relint(P) \cap \relint(Q) \neq \emptyset$, then there exists a pair of strict enclosing hyperplanes of $\cA_-$,which are parallel to the affine hull of $\alpha_4$ and $\alpha_5$. If additionally $\alpha_5 \in  \inte(\Delta_n^{+,2}) \setminus  \inte(\Delta_n^{+,2,1})$, then by perturbing the hyperplanes $\mathcal{H}_{e_1,0},\mathcal{H}_{e_1,1}$ we get strict enclosing hyperplanes of $\cA_+$. Thus, from  Proposition \ref{Prop_NonDegSing_NoCrit} and Theorem \ref{Thm_TwoEnclHyp} it follows that $\bar{\xi}_{B,\eps}$ does not have any critical point.

A similar argument shows that $\bar{\xi}_{B,\eps}$ does not have critical points if  $\alpha_5 \in  \inte(\Delta_n^{+,1}) \setminus  \inte(\Delta_n^{+,1,2})$. This shows that to get a critical point of $\bar{\xi}_{B,\eps}$ the negative exponent vectors must satisfy $\alpha_4 \in \inte(\Delta_n^{+,1,2})$ and  $\alpha_5 \in \inte(\Delta_n^{+,2,1})$, which is equivalent to
\begin{align*}
x_1 < 0, \quad 0 < y_1< 1, \quad 1-x_1-y_1 >0, \\
0 < x_2 < 1, \quad y_2 < 0, \quad 1-x_2-y_2 >0.
\end{align*}
Assuming these inequalities are satisfied, the inequality $y_1 t + y_2 >0$ in \eqref{Eq::ConstOn_t} implies $t > \tfrac{-y_2}{y_1} > 0$, and $t > 0$ implies $(1-x_1-y_1) t + 1-x_2-y_2 >0$. Thus, the first and the last inequalities in \eqref{Eq::ConstOn_t} are redundant.

The roots of $q_B$ are given by
\[ t_{1} = \tfrac{-b + \sqrt{b^2-4ac}}{2a}, \qquad t_{2} = \tfrac{-b - \sqrt{b^2-4ac}}{2a}. \]
An easy computation shows that $t_1 \neq t_2$ and both satisfy the second and the third inequality in \eqref{Eq::ConstOn_t} if and only if
\begin{align*}
b^2 -4ac &> 0\\
b^2 - 4 ac &< (2x_2y_1(1-x_1-y_1)+b)^2, \quad 0 < 2x_2y_1(1-x_1-y_1)+b \\
b^2 - 4 ac &< (2x_1y_2(1-x_1-y_1)+b)^2, \quad 0 > 2x_1y_2(1-x_1-y_1)+b.
\end{align*}
This concludes the proof.
\end{proof}

Using Lemma \ref{Lemma:5nom:1}, Lemma \ref{Lemma:5nom:2}, and Lemma \ref{Lemma:5nom:3}, we characterize the signed support of a bivariate $5$-nomial such that signed reduced $A$-discriminant has two critical points.

\begin{thm}
\label{Thm::5nomials}
Let $(\cA,\eps)$ be the full-dimensional signed support of a bivariate $5$-nomial with Gale dual matrix $B \in \R^{5\times 2}$.
 The map $\bar{\xi}_{B,\eps}$ has two critical points only if $\# \cA_+ = 3, \, \# \cA_- = 2$ and $\dim \Conv(\cA_+) =2$, or  $\# \cA_+ = 2, \, \# \cA_- = 3$ and $\dim \Conv(\cA_-) =2$.

Assume that $\cA_+ = \{ \alpha_1 , \alpha_2 , \alpha_3\}, \, \cA_- = \{ \alpha_4 , \alpha_5\}$ and $P = \Conv(\cA_+)$ has dimension $2$. Let $M \in \R^{2 \times 2}$ be an invertible matrix  such that $M (\alpha_2-\alpha_1) = e_1, \, M (\alpha_3-\alpha_1) = e_2$ and $v = -M\alpha_1$. Denote $(x_1,y_1)^\top = M \alpha_4 + v$, $(x_2,y_2)^\top = M \alpha_5 + v$ and $a,b,c$ the expressions in $x_1,y_1,x_2,y_2$ from \eqref{Sec5_qbCoeff}.  The map $\bar{\xi}_{B,\eps}$ has two critical points if and only if $\alpha_4 \in \inte(P^{+,j,i})$ and $\alpha_5 \in \inte(P^{+,i,j})$ for $i \neq j \in \{0,1,2\}$ and the coordinates of $\alpha_4$ and $\alpha_5$ satisfy the following inequalities:
\begin{equation}
\label{Eq::Thm5nomials}
\begin{aligned}
b^2 -4ac &> 0\\
b^2 - 4 ac &< (2x_2y_1(1-x_1-y_1)+b)^2, \quad 0 < 2x_2y_1(1-x_1-y_1)+b \\
b^2 - 4 ac &< (2x_1y_2(1-x_1-y_1)+b)^2, \quad 0 > 2x_1y_2(1-x_1-y_1)+b.
\end{aligned}
\end{equation}
\end{thm}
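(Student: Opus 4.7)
The theorem splits into two assertions, and throughout I will use that $Z(f_c)=Z(f_{-c})$, so the sign reversal $\eps\mapsto -\eps$ preserves the number of critical points of $\bar{\xi}_{B,\eps}$; this lets me assume $\#\cA_+\geq\#\cA_-$.

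For the cardinality-dimension assertion I run through the partitions $(\#\cA_+,\#\cA_-)\in\{(5,0),(4,1),(3,2)\}$. When $\#\cA_-=0$ any hyperplane is a non-trivial separating hyperplane, so Theorem \ref{Thm::SepHyperplaneAdisc} gives $\Gamma_\eps(A,B)=\emptyset$. When $\#\cA_-=1$, Theorem \ref{Thm::OneNegative}(i) forces every singular Hessian to be definite, so Proposition \ref{Prop_NonDegSing_NoCrit} excludes critical points. For $(3,2)$ with $\cA_+$ collinear on a line $\ell$, I exploit the freedom to pick the Gale dual: the row of $\hA$ coming from the direction transverse to $\ell$ forces $\zeta_4 b_{4,1}+\zeta_5 b_{5,1}=0$, and since $\ker\hA$ contains a $1$-dimensional subspace supported on the three collinear indices I may choose $b_1$ with $b_{4,1}=b_{5,1}=0$. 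A direct expansion then gives the leading coefficient $\sum_i B_{i,2}^2\prod_{j\neq i}B_{j,1}=0$ of $q_B$, so $q_B$ has degree at most $1$; Lemma \ref{Lemma:NumCritPoints} leaves at most one critical point.

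For the characterization I use Proposition \ref{Lemma::Transform} and Corollary \ref{Cor:AdiscNotChanged} to normalize $\cA_+=\Vertex(\Delta_2)$, bringing $B$ and $q_B$ to the forms \eqref{Eq::23_BMatrix} and \eqref{Sec5_qbCoeff}. The three supporting lines of $\Delta_2$ subdivide the plane into seven chambers $\Delta_2,\Delta_2^{-,i},\Delta_2^{+,i}$, and I enumerate the possible pairs of chambers containing $(\alpha_4,\alpha_5)$ up to permutation of vertices. Whenever both $\alpha_4$ and $\alpha_5$ lie in a common half-plane $\mathcal{H}^-_{v_k,d_k}$---which covers every pair built from the six cones except $(\Delta_2^{+,i},\Delta_2^{+,j})$ with $i\neq j$---I obtain a non-trivial separating hyperplane and $\Gamma_\eps(A,B)=\emptyset$. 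The pairs $(\Delta_2,\Delta_2^{+,i})$ and $(\Delta_2^{-,i},\Delta_2^{+,i})$ fall under Lemma \ref{Lemma:5nom:1}, and $(\Delta_2,\Delta_2^{-,i})$ under Lemma \ref{Lemma:5nom:2} (up to vertex relabeling); each contributes at most one critical point. The pair $(\Delta_2^{+,i},\Delta_2^{+,j})$ with $i\neq j$ is precisely the setting of Lemma \ref{Lemma:5nom:3}, which yields the characterization involving $P^{+,j,i},P^{+,i,j}$ and the inequalities \eqref{Eq::Thm5nomials}.

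The main obstacle is the remaining pair $(\Delta_2,\Delta_2)$, where $\alpha_4,\alpha_5\in\inte(\Delta_2)$ admits neither a non-trivial separating hyperplane nor a pair of strict enclosing hyperplanes of $\cA_+$. I resolve it by shrinking $\Delta_2$ toward its centroid $G$: set $P'=G+\lambda(\Delta_2-G)$ with $\lambda\in(0,1)$ chosen large enough that $\alpha_4,\alpha_5\in\inte(P')$. The identity $\nu_i-G=\tfrac{1}{3}\bigl((\nu_i-\nu_j)+(\nu_i-\nu_k)\bigr)$ then places each original vertex $\nu_i$ of $\Delta_2$ in the interior of the negative vertex cone $(P')^{-,i}$. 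After swapping signs, the support $(\cA,-\eps)$ satisfies the hypothesis of Theorem \ref{Thm::Simplex} with the auxiliary simplex $P'$, so every singular point of $Z(f_{-c})=Z(f_c)$ is non-degenerate and Proposition \ref{Prop_NonDegSing_NoCrit} again excludes critical points. Assembling the cases yields both directions of the theorem.
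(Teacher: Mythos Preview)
Your overall case analysis mirrors the paper's proof closely: the paper likewise disposes of the cardinality question via Theorem~\ref{Thm::OneNegative}, normalizes $\cA_+$ to the standard simplex, handles the configuration $\alpha_4,\alpha_5\in\inte(\Delta_2)$ via Theorem~\ref{Thm::Simplex} (after sign swap---your shrunk simplex works but is unnecessary, since the vertices of $\Delta_2$ already lie in the closed negative vertex cones of $\Delta_2$ itself), treats separated configurations via Theorem~\ref{Thm::SepHyperplaneAdisc}, and then invokes Lemmata~\ref{Lemma:5nom:1}--\ref{Lemma:5nom:3} for the four remaining cases.

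Two points deserve attention. First, for the collinear case $\dim\Conv(\cA_+)=1$ the paper argues geometrically: when $\relint(\Conv\cA_+)\cap\relint(\Conv\cA_-)\neq\emptyset$ the affine hulls of $\cA_+$ and of $\cA_-$ furnish strict enclosing hyperplanes for each, so Theorem~\ref{Thm_TwoEnclHyp} together with Proposition~\ref{Prop_NonDegSing_NoCrit} excludes critical points. Your Gale-dual alternative is legitimate and does give $\deg q_B\le 1$, but the expression you wrote is not the $\mu^2$-coefficient of $q_B$; with $b_{4,1}=b_{5,1}=0$ the correct $\mu^2$-coefficient is $b_{4,2}b_{5,2}\,b_{1,1}b_{2,1}b_{3,1}\sum_{i=1}^3 b_{i,1}$, which vanishes because the column sums of $B$ are zero. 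You should also justify that Lemma~\ref{Lemma:NumCritPoints}, stated for the normalized Gale dual with last row $(0,-1)$, is compatible with your choice of $b_1$ (it is, after possibly swapping $\alpha_4\leftrightarrow\alpha_5$ when $\alpha_4\in\ell$), or else argue directly that the count of critical rays of $\xi_{B,\eps}$ is independent of the Gale dual.

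Second, your chamber enumeration omits the case where $\alpha_4$ or $\alpha_5$ lies on one of the three supporting lines of $\Delta_2$. The paper dispatches this in one line: if $\alpha_4$ lies on a facet line then $a=0$ in \eqref{Sec5_qbCoeff}, so $q_B$ has at most two monomials and hence at most one positive root. As a minor aside, your parenthetical ``every pair built from the six cones except $(\Delta_2^{+,i},\Delta_2^{+,j})$'' is slightly off: the pair $(\Delta_2^{-,i},\Delta_2^{+,i})$ also fails to sit in a common $\mathcal{H}^-_{v_k,d_k}$, though you do cover it separately via Lemma~\ref{Lemma:5nom:1}.
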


\begin{proof}
If all the exponent vectors are positive (resp. negative), then $\Gamma_\eps(A,B) = \emptyset$. If $\#\cA_+ = 1$ or $\#\cA_- = 1$, then  $\bar{\xi}_{B,\eps}$ does not have any critical point by  Theorem \ref{Thm::OneNegative} and Proposition \ref{Prop_NonDegSing_NoCrit}. Thus, in order to have a critical point of  $\bar{\xi}_{B,\eps}$ one needs $\# \cA_+ \geq 2, \, \# \cA_- \geq 2$.

Write $P = \Conv(  \cA_+ )$ and $Q = \Conv( \cA_-)$ and assume $\dim P = \dim Q = 1$.  Since $\dim \Conv( P \cup Q ) = \dim \Conv( \cA ) =2$, the intersection $P\cap Q$ is either empty or a point. If $\relint(P) \cap \relint(Q) = \emptyset$, then $P$ and $Q$ can be separated by an affine hyperplane \cite[Section 2.2, Theorem 2]{grunbaum2003convex} and therefore $\Gamma_\eps(A,B) = \emptyset$ by Theorem \ref{Thm::SepHyperplaneAdisc}. If $\relint(P) \cap \relint(Q) \neq \emptyset$, then the affine hull of $P$ (resp. $Q$) is a strict enclosing hyperplane of $\cA_+$ (resp. $\cA_-$). In that case, we use Theorem \ref{Thm_TwoEnclHyp} and Proposition \ref{Prop_NonDegSing_NoCrit} to conclude that $\bar{\xi}_{B,\eps}$ does not have any critical point. This shows the first part of the theorem.

\medskip
In the rest of the proof, we assume  $\cA_+ = \{ \alpha_1 , \alpha_2 , \alpha_3\}, \, \cA_- = \{ \alpha_4 , \alpha_5\}$ and $\dim P = 2$. Choose the order of $\alpha_1, \alpha_2, \alpha_3$ so that ${\displaystyle \det\begin{bmatrix}
    1 & 1 & 1\\
    \alpha_1 & \alpha_2 & \alpha_3
\end{bmatrix}>0
}$. Then there exists an invertible matrix $M \in \R^{2 \times 2}$ with positive determinant such that $M (\alpha_2-\alpha_1) = e_1$ and $M (\alpha_3-\alpha_1) = e_2$. Let $v := -M\alpha_1$. 

By construction, the affine linear map
\[ L\colon \, \R^2 \to \R^2, \quad a \mapsto Ma +v,\]
satisfies $L(\alpha_1) = 0, \, L(\alpha_2) = e_1, \, L(\alpha_3) = e_2$ and $L(P) = \Delta_2$.  By Proposition  \ref{Lemma::Transform}, we assume without loss of generality that
\begin{align*}
 \alpha_1 = \begin{bmatrix} 0 \\ 0 \end{bmatrix}, \alpha_2 = \begin{bmatrix} 1 \\ 0 \end{bmatrix},\alpha_3 = \begin{bmatrix} 0 \\ 1 \end{bmatrix},\alpha_4 = \begin{bmatrix} x_1 \\ y_1 \end{bmatrix},\alpha_5 = \begin{bmatrix} x_2 \\ y_2 \end{bmatrix},
 \end{align*}
and choose the Gale dual matrix $B$ as in \eqref{Eq::23_BMatrix}.

 If $\alpha_4, \alpha_5$ are separated from $\Delta_2$ by an affine hyperplane, then $\Gamma_\eps(A,B) = \emptyset$ by Theorem \ref{Thm::SepHyperplaneAdisc}. If $\alpha_4, \alpha_5 \in \inte(\Delta_2)$, then $\bar{\xi}_{B,\eps}$ does not have any critical point by Theorem \ref{Thm::Simplex} and Proposition \ref{Prop_NonDegSing_NoCrit}. If $\alpha_4$ (resp. $\alpha_5 $) lies on a supporting hyperplane of a facet of $P$, then $a = 0$ (resp. $c = 0$). From this follows that $q_B$ has at most two monomial terms, so $q_B$ has at most one positive root. Thus, $\bar{\xi}_{B,\eps}$ has at most one critical point.
 
 In the following, we investigate the remaining cases:
\begin{itemize}
\item[(I)] $\alpha_4 \in \inte(\Delta_2)$ and $\alpha_5  \in \inte(\Delta_2^{+,i})$ for some $i \in \{ 0,1,2 \}$.
\item[(II)] $\alpha_4 \in \inte(\Delta_2^{-,i})$ and $\alpha_5  \in \inte(\Delta_2^{+,i})$ for some $i \in \{0,1,2\}$.
\item[(III)] $\alpha_4 \in \inte(\Delta_2)$ and $\alpha_5 \in \inte(\Delta_2^{-,i})$ for some $i \in \{ 0,1,2 \}$.
\item[(IV)] $\alpha_4 \in \inte(\Delta_2^{+,i})$ and $\alpha_5  \in \inte(\Delta_2^{+,j})$ for some $i\neq j \in \{0,1,2\}$.
\end{itemize}
For (I) and (II), Lemma \ref{Lemma:5nom:1} implies that $\bar{\xi}_{B,\eps}$ has at most one critical point. In case (III), by rotating the exponent vectors we assume without loss of generality that $\alpha_5 \in \inte(\Delta_2^{-,0})$. Now, it follows from Lemma \ref{Lemma:5nom:2} that $\bar{\xi}_{B,\eps}$ has one critical point. Under the assumption in (IV), we rotate again the exponent vectors to achieve $\alpha_4 \in \inte(\Delta_2^{+,1})$ and $\alpha_5  \in \inte(\Delta_2^{+,2})$. This rotation does not change the number of critical points of $\bar{\xi}_{B,\eps}$ by Corollary \ref{Cor:AdiscNotChanged}. We use Lemma \ref{Lemma:5nom:3} to conclude that $\bar{\xi}_{B,\eps}$ has two critical points if and only if the inequalities in \eqref{Eq::Thm5nomials} are satisfied.
\end{proof}

\begin{remark}
\label{Remark::5nomials}
Consider the signed support 
\begin{align*}
\cA_+ =\left\{
 \begin{bmatrix} 0 \\ 0 \end{bmatrix}, \begin{bmatrix} 1 \\ 0 \end{bmatrix}, \begin{bmatrix} 0 \\ 1 \end{bmatrix} \right\}, \quad \cA_- = \left\{  \begin{bmatrix} x_1 \\ y_1 \end{bmatrix}, \begin{bmatrix} x_2 \\ y_2 \end{bmatrix} \right\}.
 \end{align*}
 Using the \texttt{Mathematica} function \texttt{Reduce} \cite{Mathematica}, we verified that for every fixed $(x_1,y_1) \in \inte(\Delta_2^{+,1,2})$ and $0 < x_2 < 1$, there always exists $y_2 < 0$ satisfying the inequalities in \eqref{Eq::Thm5nomials}. In other words, for given $(x_1,y_1) \in \inte(\Delta_2^{+,1,2})$ there exists a $(x_2,y_2) \in \inte(\Delta_2^{+,2,1})$ such that $\bar{\xi}_{B,\eps}$ has two critical points.
 
 In Figure \ref{FIG4}, we depicted the region of  $(x_2,y_2)$'s satisfying the inequalities in \eqref{Eq::Thm5nomials} for $(x_1,y_1) = (-0.1,0.3)$.
\end{remark}

\begin{figure}[t]
\centering
\includegraphics[scale=0.7]{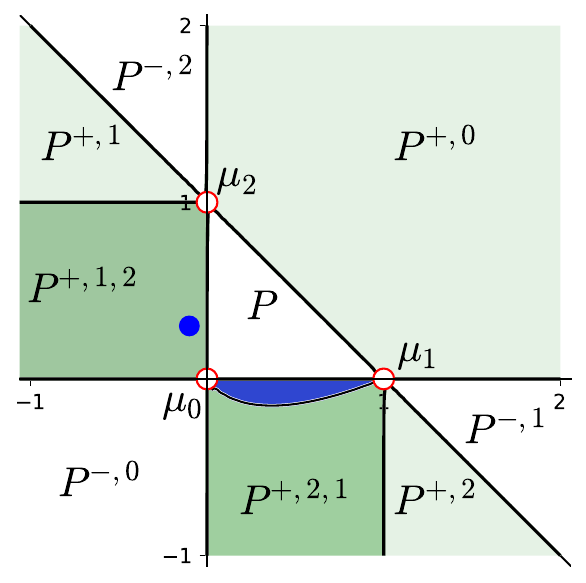}
\caption{{\small An illustration of the chambers as defined in  \eqref{Eq::ChamberNeg},\eqref{Eq::ChamberPos},\eqref{Eq::ChamberPosSub} for $P = \Conv( (0,0),(1,0),(0,1))$. The red circles denote positive exponent vectors $\cA_+ = \left\{ \begin{bmatrix} 0 \\ 0 \end{bmatrix},\begin{bmatrix} 1 \\ 0 \end{bmatrix}, \begin{bmatrix} 0 \\ 1 \end{bmatrix} \right\}$, the blue dot denote a negative exponent vector $\begin{bmatrix} -0.1 \\ 0.3  \end{bmatrix}$. The blue region contains all negative exponent vectors $\begin{bmatrix} x_2 \\  y_2  \end{bmatrix} \in P^{+,2,1}$ such that $\bar{\xi}_{B,\eps}$ has two critical points. }}\label{FIG4}
\end{figure}

\paragraph{\textbf{Acknowledgements. }} 
WD and JMR were partially supported by NSF grant CCF-1900881. 
MLT thanks Elisenda Feliu for useful discussions and comments on the 
manuscript. MLT was supported by the European Union under the Grant Agreement 
no.\ 101044561, POSALG. Views and opinions expressed are those of the 
author(s) only and do not necessarily reflect those of the European Union or 
European Research Council (ERC). Neither the European Union nor ERC can be 
held responsible for them. 

{\small

\begin{thebibliography}{10}

\bibitem{arnold2012singularities}
V.I. Arnold, S.M. Gusein-Zade, and A.N. Varchenko.
\newblock {\em Singularities of Differentiable Maps, Volume 1: Classification
  of Critical Points, Caustics and Wave Fronts}.
\newblock Modern Birkh{\"a}user Classics. Birkh{\"a}user Boston, 2012.

\bibitem{BCDPRR}
F.~Bihan, E.~Croy, W.~Deng, K.~Phillipson, R.~J. Rennie, and J.~M. Rojas.
\newblock Quickly computing isotopy type for exponential sums over circuits
  (extended abstract).
\newblock {\em ACM Commun. Comput. Algebra}, 57:152--155, 2023.

\bibitem{BihanBound}
F.~Bihan, T.~Humbert, and S.~Tavenas.
\newblock New bounds for the number of connected components of fewnomial
  hypersurfaces.
\newblock {\em arXiv}, 2208.04590, 2024.

\bibitem{Blekherman::Moments}
G.~Blekherman, F.~Rincón, R.~Sinn, C.~Vinzant, and J.~Yu.
\newblock Moments, sums of squares, and tropicalization.
\newblock {\em arXiv}, 2203.06291, 2022.

\bibitem{Brandenburg::TropicalPositivity}
M.~Brandenburg, G.~Loho, and R.~Sinn.
\newblock Tropical positivity and determinantal varieties.
\newblock {\em Algebr. Comb.}, 6(4):999--1040, 2023.

\bibitem{Bredon_book}
G.~E. Bredon.
\newblock {\em Topology and Geometry}.
\newblock Springer, 1993.

\bibitem{epr}
W.~Deng, A.~A. Erg\"{u}r, G~Paouris, and J.~M. Rojas.
\newblock Feasibility of circuit polynomials without purple swans.
\newblock {\em in Proceedings of ISSAC 2024 (International Symposium on
  Symbolic and Algebraic Computation, July 16-19, Raleigh, NC, USA}, pages 429
  -- 436, 2024.

\bibitem{RealHomotopy}
A.~A. Ergür and T.~de~Wolff.
\newblock A polyhedral homotopy algorithm for real zeros.
\newblock {\em Arnold Math. J.}, 9:305–338, 2022.

\bibitem{DescartesHypPlane}
E.~Feliu and M.~L. Telek.
\newblock On generalizing {D}escartes' rule of signs to hypersurfaces.
\newblock {\em Adv. Math.}, 408(A), 2022.

\bibitem{ForsgCusp}
J.~Forsgård.
\newblock Defective dual varieties for real spectra.
\newblock {\em J. Algebraic Comb.}, 49:49–67, 2019.

\bibitem{forsgard2017new}
J.~Forsgård, M.~Nisse, and J.~M. Rojas.
\newblock New subexponential fewnomial hypersurface bounds.
\newblock {\em arXiv}, 1710.00481, 2017.

\bibitem{Fukuda2004LectureNO}
K.~Fukuda.
\newblock Lecture notes on oriented matroids and geometric computation.
\newblock 2004.
\newblock RO-2004.0621, course of Doctoral school in Discrete System
  Optimization, EPFL 2004.

\bibitem{FukudaNotes}
K.~Fukuda.
\newblock {\em Lecture: Polyhedral Computation, Spring 2015}.
\newblock 2015.
\newblock An online version is available at
  https://people.inf.ethz.ch/fukudak/lect/pclect/notes2015/PolyComp2015.pdf.

\bibitem{gelfand1994discriminants}
I.M. Gelfand, M.M. Kapranov, and A.V. Zelevinsky.
\newblock {\em Discriminants, Resultants, and Multidimensional Determinants}.
\newblock Mathematics (Boston, Mass.). Birkh{\"a}user, 1994.

\bibitem{grunbaum2003convex}
B.~Gr{\"u}nbaum, V.~Kaibel, V.~Klee, and G.~M. Ziegler.
\newblock {\em Convex Polytopes}.
\newblock Graduate Texts in Mathematics. Springer, 2003.

\bibitem{Mathematica}
Wolfram~Research{,} Inc.
\newblock Mathematica, {V}ersion 14.0.
\newblock Champaign, IL, 2024.

\bibitem{TropAlgGeo_book}
I.~Itenberg, G.~Mikhalkin, and E.~I. Shustin.
\newblock {\em Tropical Algebraic Geometry}.
\newblock Springer Science, 2 edition, 2009.

\bibitem{JellScheidererYu::RealTrop}
P.~Jell, C.~Scheiderer, and J.~Yu.
\newblock Real tropicalization and analytification of semialgebraic sets.
\newblock {\em Int. Math. Res. Notices}, 2022(2):928--958, 2022.

\bibitem{JoswigTheobald_book}
M.~Joswig and T.~Theobald.
\newblock {\em Polyhedral and Algebraic Methods in Computational Geometry}.
\newblock Springer, 2013.

\bibitem{Book::Isotopy}
S.~Kalajdzievski.
\newblock {\em An Illustrated Introduction to Topology and Homotopy}.
\newblock CRC Press, 2014.

\bibitem{Kapranov1991ACO}
M.~Kapranov.
\newblock A characterization of {A}-discriminantal hypersurfaces in terms of
  the logarithmic {G}auss map.
\newblock {\em Math. Ann.}, 290:277--285, 1991.

\bibitem{maclagan2015introduction}
D.~Maclagan and B.~Sturmfels.
\newblock {\em Introduction to Tropical Geometry}.
\newblock Graduate Studies in Mathematics. American Mathematical Society, 2015.

\bibitem{maple}
{Maplesoft, a division of Waterloo Maple Inc}.
\newblock \texttt{Maple}.
\newblock https://www.maplesoft.com, 2023.

\bibitem{Maranas_Floudas}
C.~D. Maranas and C.~A. Floudas.
\newblock All solutions of nonlinear constrained systems of equations.
\newblock {\em J. Global. Optim.}, 7:143--182, 1995.

\bibitem{meyer04}
C.~D. Meyer.
\newblock {\em Matrix Analysis and Applied Linear Algebra (Solution)}.
\newblock Society for Industrial and Applied Mathematics, 2004.

\bibitem{RojasRusek_Adiscriminant}
J.~M. Rojas and K.~Rusek.
\newblock A-discriminants for complex exponents and counting real isotopy
  types.
\newblock {\em arXiv}, 1612.03458, 2017.

\bibitem{Rusek::Thesis}
K.~A. Rusek.
\newblock {\em {A}-Discriminant Varieties and Amoebae}.
\newblock PhD thesis Texas A\&M University, 2013.

\bibitem{skorski2019chain}
M.~Skorski.
\newblock Chain rules for {H}essian and higher derivatives made easy by tensor
  calculus.
\newblock {\em arXiv}, 1911.13292, 2019.

\bibitem{SpeyerWilliams::PosGrassmannian}
D.~Speyer and L.~Williams.
\newblock The tropical totally positive grassmannian.
\newblock {\em J. Algebr. Comb.}, 22:189--210, 2005.

\bibitem{Stiemke1915berPL}
E.~Stiemke.
\newblock {\"U}ber positive {L}{\"o}sungen homogener linearer {G}leichungen.
\newblock {\em Math. Ann.}, 76:340--342, 1915.

\bibitem{Vinzant::RealRadical}
C.~Vinzant.
\newblock Real radical initial ideals.
\newblock {\em J. Algebra}, 352(1):392--407, 2009.

\bibitem{Viro::Degree7}
O.~Y. Viro.
\newblock Curves of degree 7, curves of degree 8 and the {R}agsdale conjecture.
\newblock {\em Dokl. Akad. Nauk SSSR}, 254:1305--1310, 1980.

\bibitem{Viro_Dissertation}
O.~Y. Viro.
\newblock {\em Constructing real algebraic varieties with prescribed topology}.
\newblock Dissertation, LOMI, Leningrad, 1983.
\newblock An english translation by the author is available at
  https://arxiv.org/abs/math/0611382.

\bibitem{Viro::Hilbert16}
O.~Y. Viro.
\newblock From the sixteenth {H}ilbert problem to tropical geometry.
\newblock {\em Japanese J. Math.}, 3:185–214, 2008.

\bibitem{SimpleDescartes}
X.~Wang.
\newblock A simple proof of {D}escartes's rule of signs.
\newblock {\em Am. Math. Mon.}, 111:525--526, 2004.

\bibitem{DiffManifoldandLieGroup_book}
F.~W. Warner.
\newblock {\em Foundations of Differentiable Manifolds and Lie Groups}.
\newblock Springer Science, 1983.

\bibitem{zaslavsky1975facing}
T.~Zaslavsky.
\newblock {\em Facing up to Arrangements: Face-Count Formulas for Partitions of
  Space by Hyperplanes}.
\newblock American Mathematical Society: Memoirs of the American Mathematical
  Society. American Mathematical Society, 1975.

\bibitem{Ziegler_book}
G.~M. Ziegler.
\newblock {\em Lectures on Polytopes}.
\newblock Springer, 1995.

\end{thebibliography}

}

\end{document}